\renewcommand{\lll}{\bigl\langle\bigl\langle}
\newcommand{\rrr}{\bigr\rangle\bigr\rangle}
\renewcommand{\ll}{\langle\!\langle}
\renewcommand{\gg}{\rangle\!\rangle}
\newcommand{\rrvert}{\vert}
\newcommand{\rrVert}{\Vert}
\newcommand{\llvert}{\vert}
\newcommand{\llVert}{\Vert}
\newtheorem{teo}{Theorem}[section]
\newtheorem{prop}[teo]{Proposition}
\newtheorem{lem}[teo]{Lemma}
\begin{document}
\begin{frontmatter}

\title{Distribution-valued heavy-traffic limits for the~$G/\mathit{GI}/\infty$ queue}
\runtitle{Heavy-traffic limits for the $G/\mathit{GI}/\infty$ queue}

\begin{aug}
\author[A]{\fnms{Josh}~\snm{Reed}\corref{}\ead[label=e1]{jreed@stern.nyu.edu}}
\and
\author[B]{\fnms{Rishi}~\snm{Talreja}\ead[label=e2]{rtalreja@kcg.com}}
\runauthor{J. Reed and R. Talreja}
\affiliation{New York University and KCG Holdings, Inc.}
\address[A]{Stern School of Business\\
New York University \\
New York, New York 10012\\
USA\\
\printead{e1}}
\address[B]{KCG Holdings, Inc.\\
Jersey City, New Jersey 07310\\
USA\\
\printead{e2}}
\end{aug}

\received{\smonth{12} \syear{2010}}
\revised{\smonth{3} \syear{2014}}

%
\begin{abstract}
We study the $G/\mathit{GI}/\infty$ queue in heavy-traffic using
tempered distri\-bution-valued processes which track the age and residual service
time of each customer in the system. In both cases, we use the
continuous mapping theorem together with functional central limit
theorem results
in order to obtain fluid and diffusion limits for these
processes in the space of tempered distribution-valued processes. We
find that our diffusion limits are tempered
distribution-valued Ornstein--Uhlenbeck processes.
\end{abstract}

%
\begin{keyword}[class=AMS]
\kwd[Primary ]{60F05}
\kwd{60K25}
\kwd[; secondary ]{60J25}
\end{keyword}
\begin{keyword}
\kwd{Queueing}
\kwd{central limit theorem}
\kwd{Markov process}
\kwd{infinite server}
\end{keyword}
\end{frontmatter}

\section{Introduction}\label{SecIntro}

Limit theorems for the infinite-server queue in heavy-traffic have a rich
history starting with the seminal work of Iglehart \cite{Ig65} on the
$M/M/\infty$ queue. This work then inspired a line of research aimed at
extending the results of \cite{Ig65} to additional classes of service time
distributions. Whitt \cite{WhittInfinite} studies the $\mathit{GI}/\mathit{PH}/\infty$
queue, having phase-type service-time distributions, and Glynn and
Whitt \cite{GlynnWhitt} consider the $\mathit{GI}/\mathit{GI}/\infty$ queue with service
times taking values in a finite set. In \cite{Borovkov},
\cite{KrPu97} and \cite{PuhalskiiReed08}, the $G/\mathit{GI}/\infty$ queue is
studied with general service time distributions. Pang et al.
\cite{PTW} gives a
survey of these results.

In this paper, we study two Markov processes associated with the
$G/\mathit{GI}/\infty$ queue. The first process which we study is a tempered
distribution-valued process which tracks the age of
each customer in the system. We refer to this process as the age
process. The second process
which we study is also a tempered distribution-valued
process and it tracks the residual service time of each customer in the
system as well as the amount of time since departure for each customer
who has left the system.
We refer to this process as the residual service time process. Although
analyzing
either of these processes might at first appear to
be a difficult task, one of the key themes that runs throughout the
present paper is that techniques originally developed for establishing
heavy-traffic limits in the finite-dimensional setting may also be
successfully applied in the more abstract
infinite-dimensional setting.

Our main results in this paper are to obtain fluid and diffusion limits
for both the age and residual service time processes. In particular,
for both the age and the residual service time processes, we use the
continuous mapping theorem together with functional central limit
theorem results in order to establish our main results. The
corresponding diffusion limits that we obtain for both the age and
residual time
processes may be characterized as tempered distribution-valued
Ornstein--Uhlenbeck processes.

The tempered distribution-valued representation which we use for the
residual service time process was also used by Decreusefond and Moyal
\cite{DM08b} in order to analyze the $M/G/\infty$ queue. However, in
the current
paper we go beyond analyzing the residual service time process and also
analyze the age process, which was not treated in \cite{DM08b}. In
particular, we provide a diffusion limit result
for the tempered distribution-valued age process which is fundamentally
different from the diffusion limit for the age process obtained in
\cite{DM08b}. Moreover, our general methodology for
proving our main results differs from that employed in \cite{DM08b}.
Specifically, while the approach in \cite{DM08b} has as its starting
point the infinitesimal generator of the residual service
time process, in the present paper we begin by defining the model
primitives and setting up a governing system equation for both the age
and residual service time processes. As alluded to above, we then
rely upon the continuous mapping theorem and functional central limit
theorem results when proving our main results.

A second major contribution of our work is to make a connection
\mbox{between} the literature on infinite-dimensional heavy-traffic limits
for queueing \mbox{systems} \mbox{\cite{DM08,DM08b,DGP,Gromoll04,GPW02,GRZ08,KR07}}
and the vast
literature on infinite-dimensional Ornstein--Uhlen\-beck processes
motivated by applications to interacting particle systems
\cite{Langevin,BojdeckiGorostiza,Bojdecki,Hitsuda,HolleyStroock,KallianpurPerezAbreuMap,KallianpurPerezAbreuCont,LangevinMitoma,MitomaGeneralizedOU}.
Our work especially relies upon
\cite{KallianpurPerezAbreuMap} and \cite{KallianpurPerezAbreuCont} in
order to
prove continuity of a particular regulator map.

Another set of papers related to ours are those of Kaspi and Ramanan
\cite{kaspi2010spde,KR07}. Although these works
analyze the many-server queue with general service time distributions,
their infinite-dimensional representation of the system is similar to ours.
Fluid limits are established for the system in \cite{KR07} in the
space of Radon measure-valued processes. However, when
establishing corresponding diffusion limits, the limit process
evidently falls out of the space of Radon measure-valued
processes and distribution-valued processes are used instead in \cite
{kaspi2010spde}. In the present
paper, we follow the work of \cite{DM08b} in which the space of
tempered distributions is used. This
space may be characterized as the topological dual of Schwartz space,
the space of rapidly decreasing, infinitely differentiable functions.

We also mention the work \cite{PangWhitt}, where the authors build
upon the
work of \cite{GlynnWhitt} and \cite{KrPu97} in order to prove heavy-traffic
limits for the $G/\mathit{GI}/\infty$ queue in a two-parameter function
space.
They analyze both the age and the residual service time process as we
do. The main
difference between the present work and \cite{PangWhitt} is that in
the present work
tempered distribution-valued processes are used which allows one to
apply the
continuous mapping theorem and other standard results in order to obtain
heavy-traffic limits.

The remainder of this paper is now organized as follows. In
Section~\ref{SecSystemEquations}, we derive basic system equations for
both the
age process and the residual service time process. These equations
serve as the starting point
for our analysis in the remainder of the paper. In Section~\ref{SecRegulatorMap}, we
present a regulator map result to be used in conjunction with the
continuous mapping
theorem in order to prove our main results. In Section~\ref{SecMartingale}, we provide martingale results that are used together
with the regulator map of Section~\ref{SecRegulatorMap} in order to obtain
our fluid and diffusion limits. In Sections~\ref{SecFluidLimits}~and~\ref{SecDiffusionLimits}, we prove our fluid and diffusion limits,
respectively. In the \hyperref[appen]{Appendix}, we provide the proofs of several
technical lemmas that are
used throughout the paper.

\subsection{Technical background}\label{SubsecNotation}

We now provide some technical background which is useful for the
remainder of the paper. We begin with some preliminary details.

\subsubsection{Preliminaries}

All random variables and processes in this paper are assumed to be
defined on a common probability space $(\Omega,\mathcal{F},\mathbb
{P})$ and are measurable maps from $(\Omega,\mathcal{F},\mathbb{P})$
to an arbitrary topological space with an associated Borel $\sigma
$-algebra. It turns out that many of the random quantities which we
study in this paper take values in a topological space which is not
metrizable and so we now provide the definition of weak convergence on
an arbitrary topological space. We follow the approach of \cite
{KallianpurXiong}.
Let $X$ be an arbitrary topological space with associated Borel $\sigma
$-algebra $\mathcal{B}(X)$. We say that a sequence of probability
measures $(\mathbb{P}_n)_{n \geq1}$ on $\mathcal{B}(X)$ weakly
converges to a probability measure $\mathbb{P}$ on $\mathcal{B}(X)$,
abbreviated as $\mathbb{P}_n \Rightarrow\mathbb{P} $, if $\int f\,d
\mathbb{P}_n \rightarrow\int f\,d \mathbb{P}$ for every bounded,
continuous real functional $f$ on~$X$ (see Definition 2.2.1 of \cite
{KallianpurXiong}). We also use the notation $\stackrel{\mathbb
{P}}{\rightarrow}$ to denote convergence in probability. For any two
topological spaces $X$ and $Y$, we denote by $X \times Y$ the cartesian
product of $X$ and $Y$ and we associate with $X \times Y$ the product
topology. Note that using the above definition of weak convergence, it
is straightforward to show that the continuous mapping theorem
continues to hold (see, e.g., the proof of Theorem~3.4.1 of
\cite{WhittBook}). In particular, we have the following.

%
\begin{prop}\label{cmtprop}
Let $X$ and $Y$ be two topological spaces and let $(x^n)_{n \geq
1}$ be a sequence of random elements of $X$ such that $x^n \Rightarrow
x$. If $g\dvtx X \mapsto Y$ is a continuous function, then $g(x^n)
\Rightarrow g(x)$ in $Y$.
\end{prop}

Next, for each $0 < T < \infty$, let
$\mathbb{D}([0,T],\mathbb{R})$ denote the space of functions from
$[0,T]$ to $\mathbb{R}$ that are
right-continuous on $[0,T)$ with left limits everywhere on $(0,T]$. We
equip $\mathbb{D}([0,T],\mathbb{R})$ with the Skorokhod
$J_1$-topology \cite{Billingsley99}. We also note that we will
commonly abbreviate the notation $\mathbb{D}([0,T],\mathbb{R})$ by
simply writing $\mathbb{D}$. Next, let $\mathbb{D}([0,T],\mathbb
{D})$ denote the space of functions from $[0,T]$ to $\mathbb{D}$ that
are right-continuous on $[0,T)$ with left limits everywhere on $(0,T]$.
We equip $\mathbb{D}([0,T],\mathbb{D})$ with the Skorokhod
$J_1$-topology \cite{Billingsley99} as well. For an element $x \in
\mathbb{D}([0,T],\mathbb{R})$, we set
\begin{eqnarray*}
\llVert x\rrVert_T &=& \sup_{0 \leq t \leq T}\bigl\llvert
x(t)\bigr\rrvert.
\end{eqnarray*}
We denote by $e=(t, t \in[0,T])$, the identity process on $[0,T]$.

\subsubsection{Schwartz space}

The space of rapidly decreasing functions, also known as Schwartz
space, plays an important rule in this paper and so we now provide a
brief review of some of the relevant facts concerning this space. Much
of the material found in this subsection may also be found in \cite
{KallianpurXiong}.

Let $\mathbb R$, $\mathbb R_+$ and $\mathbb R_-$ denote the set of
reals, nonnegative reals and nonpositive reals, respectively. Also,
denote by $\mathbb{N}=\{0,1,2,\ldots\}$ the set of nonnegative
integers. Let
$C^\infty(\mathbb{R})$ denote the set of infinitely differentiable
functions from $\mathbb R$
to $\mathbb R$ and let $C^\infty(\mathbb{R}_{+})$ denote the set of
infinitely differentiable functions from $\mathbb R_{+}$
to $\mathbb R$. Also, let $C^\infty_b(\mathbb{R})$ denote the set of
infinitely differentiable, bounded functions from $\mathbb R$
to $\mathbb R$ whose derivatives of all orders are bounded and,
similarly, let $C^\infty_b(\mathbb{R}_{+})$ denote the set of
infinitely differentiable, bounded functions from $\mathbb R_{+}$ to
$\mathbb{R}$
whose derivatives of all orders are bounded.

Now define
%
%
\begin{equation}
\label{schwartz} \mathcal{S}\equiv\bigl\{ \varphi\in C^\infty(\mathbb{R})\dvtx
\llVert\varphi\rrVert_{\alpha, \beta} < \infty\mbox{ for all }
\alpha,
\beta\in\mathbb N\bigr\},
\end{equation}
where
%
%
\begin{equation}
\label{schwartzseminorm} \llVert\varphi\rrVert_{\alpha, \beta} \equiv
\sup
_{x \in
\mathbb R} \bigl\llvert x^\alpha\varphi^{(\beta)}(x)
\bigr\rrvert
\end{equation}
and $\varphi^{(\beta)}$ denotes the $\beta$th derivative of $\varphi
$. The space $\mathcal{S}$ is commonly referred to as Schwartz space
or the space of rapidly decreasing functions \cite{KallianpurXiong}.

The topology of $\mathcal{S}$ is given by the family
of seminorms \mbox{$\{ \llVert \cdot\rrVert _{\alpha,\beta}\dvtx \alpha,
\beta\in\mathbb N\}$} defined in (\ref{schwartzseminorm}). In
particular, $\varphi_n \rightarrow\varphi$ in $\mathcal{S}$ if
$\llVert \varphi_n - \varphi\rrVert _{\alpha,\beta} \rightarrow
0$ for all $\alpha,\beta\in\mathbb{N}$.
We note that by Lemma 1.3.2 and Theorem 1.3.2 of \cite
{KallianpurXiong}, the space $\mathcal{S}$ is a nuclear Fr\`{e}chet space.
Moreover, we also note that one may construct a sequence of seminorms
\mbox{$\{\llVert \cdot\rrVert _p \dvtx  p \in\mathbb{N}\}$} with the property
that $\llVert \cdot\rrVert _p \leq\llVert \cdot\rrVert _{p+1}$
for each $p \in\mathbb{N}$ and which also induce the same the
topology on $\mathcal{S}$ as the seminorms $\{ \llVert \cdot\rrVert
_{\alpha,\beta} \dvtx \alpha, \beta\in\mathbb N\}$ given above. In
particular, by
Lemma 1.3.3 of \cite{KallianpurXiong}, one has that for each $p \in
\mathbb{N}$, there exists a $k \in\mathbb{N}$ and $C>0$ such that
%
%
\begin{eqnarray}
\llVert\varphi\rrVert_p &\leq&C \max_{0 \leq\alpha, \beta
\leq k+1}
\llVert\varphi\rrVert_{\alpha,\beta}\qquad\mbox{for~all }\varphi\in
\mathcal{S}, \label{lem133}
\end{eqnarray}
and, by Lemma 1.3.4 of \cite{KallianpurXiong}, one has that for each
$\alpha,\beta\in\mathbb{N}$, there exists a $p \in\mathbb{N}$ and
$M>0$ such that
%
%
\begin{eqnarray}
\max_{0 \leq\alpha, \beta\leq k+1}\llVert\varphi\rrVert_{\alpha,\beta
} &\leq& M
\llVert\varphi\rrVert_p\qquad\mbox{for~all }\varphi\in
\mathcal{S}. \label{lem134}
\end{eqnarray}
For a precise construction of $\llVert \cdot\rrVert _p$ for each $p
\in\mathbb{N}$, one may consult page 24 of~\cite{KallianpurXiong}.

The set of all linear maps from $\mathcal{S}$ to $\mathcal{S}$ is
denoted by $L(\mathcal{S},\mathcal{S})$ and the strong topology on
$L(\mathcal{S},\mathcal{S})$ is defined in the following manner.
A subset $B$ of $\mathcal{S}$ is said to be bounded if for any
neighborhood $U$ of $\varphi\equiv0 \in\mathcal{S}$, there exists a
constant $\alpha> 0$ such that $\alpha^{-1}B \subset U$ (see
Definition 1.1.7 of \cite{KallianpurXiong}). The strong topology on
$L(\mathcal{S},\mathcal{S})$ is then
given by the following definition (see Theorem 1.2.1 of \cite
{KallianpurXiong}).

\begin{definition}
For each bounded subset $B$ of $\mathcal{S}$ and $p \in\mathbb{N}$, let
\begin{eqnarray*}
q_{B,p}(T) &\equiv& \sup_{\varphi\in B}\llVert T \varphi
\rrVert_p\qquad\mbox{for~all }T \in L(\mathcal{S},\mathcal{S}).
\end{eqnarray*}
Then $\{q_{B,p} \}$ constitutes a family of seminorms on $L(\mathcal
{S},\mathcal{S})$ and the topology given by these seminorms is
referred to as the strong topology on
$L(\mathcal{S},\mathcal{S})$.
\end{definition}

\subsubsection{The space of tempered distributions}

Many of the processes studied in this paper take values in the
topological dual of
$\mathcal{S}$, which we denote by $\mathcal{S}'$. Recall that
$\mathcal{S}'$ is the space of all
continuous linear functionals on $\mathcal{S}$. Elements of $\mathcal
{S}'$ are referred to as
\textit{tempered distributions} and we now review some relevant facts
concerning tempered distributions
as well as tempered distribution-valued processes.

For each $\mu\in\mathcal{S}'$ and $\varphi\in\mathcal{S}$, we
denote the
\textit{duality product} of $\mu$ and $\varphi$ by $\langle\mu,
\varphi\rangle\equiv\mu(\varphi)$. The \textit{distributional
derivative} of $\mu\in\mathcal{S}'$ is
denoted by $\mu'$ and is defined to be the
unique element of $\mathcal{S}'$ such that
\[
\bigl\langle\mu', \varphi\bigr\rangle= - \bigl\langle\mu,
\varphi' \bigr\rangle\qquad\mbox{for~all }\varphi\in\mathcal{S}.
\]
It is clear by the definition of
$\mathcal{S}$ that $\mu'$ is well defined. For each $\mu\in
\mathcal{S}'$ and $t \in\mathbb R$, we also define
$\tau_t \mu$ as the unique element of $\mathcal{S}'$ such that
\[
\langle\tau_t \mu, \varphi\rangle= \langle\mu, \tau_t
\varphi\rangle\qquad\mbox{for all }\varphi\in\mathcal{S},
\]
where $\tau_t \varphi\in\mathcal{S}$ is the function defined by
$\tau_t
\varphi(\cdot) \equiv\varphi( \cdot- t )$.

All statements in this paper regarding convergence in $\mathcal{S}'$
are with respect to the strong topology on $\mathcal{S}'$, which we
now define. One may consult Section~1.1 of \cite{KallianpurXiong}
for further details.

%
\begin{definition}\label{defstrongdual}
For each bounded subset $B \subset\mathcal{S}$, let
%
%
\begin{equation}
\label{strongdual} q_B(\mu) \equiv\sup_{\varphi\in B} \bigl
\llvert\langle\mu, \varphi\rangle\bigr\rrvert\qquad\mbox{for all }\mu
\in
\mathcal{S}'.
\end{equation}
Then the strong topology on $\mathcal{S}'$ is the topology
induced by the family of seminorms~$\{ q_B \}$.
\end{definition}

Unfortunately, the space $\mathcal{S}'$ is not metrizable
with respect to the strong topology (see Section~2 of \cite{KallianpurXiong}).
Nevertheless, as we discuss below, one may still usefully speak of weak
convergence of $\mathcal{S}'$-valued random elements and processes
taking values in $\mathcal{S}'$.

Let $\mathbb{D}([0,T],\mathcal{S}')$ denote the space of functions
from $[0,T]$ to $\mathcal{S}'$ that are right-continuous on $[0,T)$
with left limits everywhere\vspace*{1pt} on $(0,T]$. If $(\mu_t)_{t \geq0} \in
\mathbb{D}([0,T],\mathcal{S}') $ and $t \in[0,T]$,
we then define the tempered distribution $\int_0^t \mu_s \,ds $ to
be the unique element of
$\mathcal{S}'$ (see Section~2 of \cite{KallianpurPerezAbreuMap}) such that
\[
\biggl\langle\int_0^t \mu_s \,ds,
\varphi\biggr\rangle= \int_0^t \langle
\mu_s, \varphi\rangle \,ds\qquad\mbox{for all }t \geq0, \varphi\in
\mathcal{S}.
\]

As noted immediately following Definition~\ref{defstrongdual} above,
the space $\mathcal{S}'$ equipped with the strong topology is not
metrizable and so the Skorokhod metric and ensuing Skorokhod
$J_1$-topology may not defined on $\mathbb{D}([0,T],\mathcal{S}')$ in
the usual manner. We therefore follow the approach of \cite
{KallianpurXiong,Mitoma} in defining an appropriate topology on
$\mathbb{D}([0,T],\mathcal{S}')$. Let $\Lambda$ be the set of
strictly increasing continuous maps from $[0,T]$ onto itself such that
for each $\lambda\in\Lambda$,
\begin{eqnarray*}
\gamma(\lambda) &=& \sup_{0 \leq s <t \leq T} \biggl\llvert\ln\biggl(
\frac{\lambda_t-\lambda_s}{t-s} \biggr) \biggr\rrvert<\infty.
\end{eqnarray*}
We then have the following definition (see \cite{KallianpurXiong,Mitoma}).

%
\begin{definition}\label{defstrongskor}
For each seminorm $q_B$ defining the strong topology on $\mathcal
{S}'$, let
\begin{eqnarray*}
d_{q_B}^{o}(\mu,\nu) &=& \inf_{\lambda\in\Lambda}
\Bigl(\sup_{0
\leq t \leq T}\bigl\llvert q_B(
\mu_t-\nu_{\lambda_t}) +\gamma(\lambda)\bigr\rrvert\Bigr)\qquad
\mbox{for all }\mu,\nu\in\mathcal{S}'.
\end{eqnarray*}
The topology on $\mathbb{D}([0,T],\mathcal{S}')$ is then defined by
the family of pseudometrics~$\{d_{q_B}^{o}\}$.
\end{definition}

By part~(c) of Theorem 2.4.1 of \cite{KallianpurXiong}, the
topology given
in Definition~\ref{defstrongskor} above is equivalent to the
topology defined by the family of
pseudometrics $\{d_{q_B}\}$, where
\begin{eqnarray*}
d_{q_B}(\mu,\nu) &=& \inf_{\lambda\in\Lambda} \Bigl(\sup
_{0
\leq t \leq T}\bigl\llvert q_B(\mu_t-
\nu_{\lambda_t})\bigr\rrvert+\sup_{0
\leq t \leq T} \llvert
\lambda_t-t\rrvert\Bigr)\qquad\mbox{for all }\mu,\nu\in
\mathcal{S}'.
\end{eqnarray*}
We also note that under this topology, $\mathbb{D}([0,T],\mathcal
{S}')$ is a completely regular topological space \cite{Mitoma}.

The following result is an important consequence of Proposition 5.2 of
\cite{Mitoma} regarding weak convergence of processes taking values in
the dual of a nuclear Fr\`{e}chet space. It provides a convenient
characterization of weak convergence of processes taking values in
$\mathcal{S}'$.

%
\begin{teo}[(Mitoma's theorem)]\label{Mitoma}
Let $(\mu^n)_{n \geq
1}$ be a sequence of random elements of $\mathbb{D}([0,T], \mathcal
{S}')$. Then
\[
\mu^n \Rightarrow\mu\qquad\mbox{in } \mathbb{D}\bigl([0,T],
\mathcal{S}'\bigr)
\]
if the following two statements hold:
\begin{longlist}[(2)]
\item[(1)] For each $\varphi\in\mathcal{S}$, the sequence $\{ \langle\mu^n,
\varphi\rangle\}_{n \geq1}$ is tight in $\mathbb{D}([0,T], \mathbb R)$.
\item[(2)] For $\varphi_1, \ldots, \varphi_m \in\mathcal{S}$ and
$t_1,\ldots,t_m \in[0,T]$,
\begin{eqnarray*}
\bigl( \bigl\langle\mu^n_{t_1}, \varphi_1
\bigr\rangle, \ldots, \bigl\langle\mu^n_{t_m},
\varphi_m \bigr\rangle\bigr) &\Rightarrow& \bigl( \langle
\mu_{t_1}, \varphi_1 \rangle, \ldots, \langle
\mu_{t_m}, \varphi_m \rangle\bigr) \qquad\mbox{in }
\mathbb{R}^m.
\end{eqnarray*}
\end{longlist}
\end{teo}

We now conclude the technical background section with some comments
regarding martingales and, in particular, $\mathcal{S}'$-valued
martingales. Let $(\mathcal{F}_t )_{t \geq0}$ be a filtration on an underlying
probability space $(\Omega, \mathcal{F}, \mathbb{P})$ and let $M$
and $N$ be two $\mathbb{R}$-valued
$\mathcal{F}_t $-martingales. The quadratic
covariation of $M$ and $N$ is
denoted by $(\langle M,N\rangle_t)_{t \geq0}$ and the quadratic
variation of $M$ is
denoted by \mbox{$(\ll M\gg_t)_{t \geq
0} \equiv(\langle M,M\rangle_t)_{t \geq0}$}. An $\mathcal{S}'$-valued
process $M$
is said to be an \textit{$\mathcal{S}'$-valued
$\mathcal{F}_t$-martingale} if for all $\varphi\in\mathcal{S}$,
$(\langle
M_t, \varphi\rangle)_{t \geq0}$ is an $\mathbb R$-valued
$\mathcal{F}_t$-martingale. For two $\mathcal{S}'$-valued martingales
$M$ and $N$, their \textit{tensor quadratic
covariation} $(\langle M,N\rangle_t)_{t \geq0}$ is given for all $t
\geq0$ and
all $\varphi, \psi\in\mathcal{S}$ by
\[
\langle M,N\rangle_t(\varphi, \psi) \equiv \bigl\langle\langle
M_\cdot, \varphi\rangle, \langle N_\cdot, \psi
\rangle\bigr\rangle_t,
\]
and the \textit{tensor quadratic variation} \mbox{$(\ll M\gg_t)_{t \geq0}$} of
an $\mathcal{S}'$-valued martingale is given by
\mbox{$(\ll M\gg_t)_{t \geq0} \equiv(\langle M,M\rangle_t)_{t \geq0}$}. Two
$\mathcal{S}'$-valued martingales, $M$ and $N$, are said to be
\textit{orthogonal} if $\langle M, N\rangle=0$ identically.
Corresponding notions
for the optional quadratic variation process $[M]$ are defined
analogously.\looseness=1

\section{System equations}\label{SecSystemEquations}

In this section, we obtain semi-martingale decompositions of the
tempered distribution-valued age process $\mathcal{A}\equiv( \mathcal
{A}_t)_{ t
\geq0 }$ and the tempered distribution-valued residual service time
process $\mathcal{R}\equiv(
\mathcal{R}_t )_{t \geq0 }$. We begin in Section~\ref{SubsecAgeEquations} by treating the
age process $\mathcal{A}$ and then move
on in Section~\ref{SubsecResidualEquations} to treating the residual
service time process $\mathcal{R}$.

\subsection{Age process}\label{SubsecAgeEquations}

We consider a $G/\mathit{GI}/\infty$ queue with general arrival process $(E_t) _{t
\geq0} \in\mathbb{D}([0,\infty),\mathbb R)$. We assume that
$E_0=0$, $\mathbb{P}$-a.s., and, for convenience in our proofs, we
also define $E_{t}=0$ for $t < 0$. We also make the assumption that for
each $t \geq0$, we have that $\mathbb{E}[E_t^2] < \infty$. Next, for
each $i \geq1$, we denote by
\begin{eqnarray*}
\tau_i &=& \inf\{t \geq0 \dvtx  E_t \geq i\}
\end{eqnarray*}
the time of the arrival of the $i$th customer to the system after time
$t=0$. We assume that $\mathbb{E}[\tau_i] < \infty$ for each
$i=1,2,\ldots.$ We denote by $\eta_i$ the service time of the $i$th customer
to arrive to the system after time $t=0$ and we assume that $\{\eta_i,
i \geq1\}$ is an i.i.d. sequence of nonnegative, mean 1 random
variables with cumulative distribution function (c.d.f.) $F$,
complementary cumulative distribution function (c.c.d.f.) $\bar{F} = 1 -
F$, and
probability density function (p.d.f.) $f$. We also assume that the hazard
rate function $h$ of $F$ satisfies the following assumption.

\begin{assumption}\label{boundedhazardassumption}
The function $h \in C_b^{\infty}(\mathbb{R}_{+})$.
\end{assumption}

%

Now let $(A_t)_{t \geq0} \in\mathbb{D}([0,\infty),\mathbb{D})$ be
such that for each $t \geq0$
and $y \geq0$, the quantity $A_t(y)$ represents
the number of customers in the system at time $t \geq0$ that have
been in the system for less than or equal to $y$ units of time
at time $t$. For $y < 0$, we set $A_t(y)=0$. At time $t=0$, we assume
that there are $A_{0}(y)$
customers present who have been in the system for less than or equal to
$y \geq0$
units of time and that there are a total of $A_0(\infty)$ customers
present. We assume that $\mathbb{E}[A_0^2(\infty)] < \infty$. For
each $i = 1,\ldots,A_0(\infty)$, we denote by
\[
\tilde{\tau}_i =-\inf\bigl\{ y \geq0 \dvtx  A_0(y) \geq i
\bigr\}
\]
the ``arrival'' time of the $i$th initial customer to the system. We
denote by
$\tilde{\eta}_i$ the remaining service time at time $t=0$ of the $i$th
initial customer in the system. The distribution of $\tilde{\eta}_i$,
conditional on the arrival time $\tilde{\tau}_i$, is given by
%
%
\begin{equation}
\label{initialservicetimes} \mathbb P( \tilde{\eta}_i > x | \tilde{
\tau}_i ) = \frac
{1-F(-\tilde{\tau}_i + x)}{1-F(-\tilde{\tau}_i) },\qquad x \geq0.
\end{equation}
We denote by $f_{\tilde{\tau}_i}$ the
conditional p.d.f. associated with this
distribution and we set $h_{\tilde{\tau}_i}(\cdot)=h(\cdot-\tilde
{\tau}_i)$.

We now derive a convenient representation for the system equations for
$(A_t)_{t \geq0}$
and its tempered distribution-valued counterpart, $\mathcal{A}$, which
we define shortly. We begin by noting that by first
principles we have that for each $t \geq0$ for $y \geq0$,
%
%
\begin{equation}
\label{QueueFirstPrinciples} A_t(y) = \sum_{i=1}^{A_0(\infty)}
\mathbf{1}_{ \{ t - \tilde{\tau
_i} \leq y \} } \mathbf{1}_{ \{ t < \tilde{\eta}_i \} } + \sum
_{i=1}^{E_t} \mathbf{1}_{ \{ t - \tau_i \leq y \} }
\mathbf{1}_{ \{ t - \tau_i < \eta_i \} }.
\end{equation}
Our first result provides an alternative way to write
(\ref{QueueFirstPrinciples}). In the following, we set~$\sum_{i=1}^{0}=0$.

%
\begin{prop}\label{PropEquivalentAges}
For each $t \geq0$ and $y \geq0$,
%
%
\begin{eqnarray} \label{AgesDDForm}
A_t(y) &=& A_0(y) - \sum_{i=1}^{A_0(\infty)}
\mathbf{1}_{ \{ \tilde
{\eta}_i \leq t \wedge(y+\tilde{\tau}_i) \} } - \sum_{i=A_0(y-t)+1}^{A_0(y)}
\mathbf{1}_{ \{ \tilde{\eta}_i > y +\tilde
{\tau}_i \} }
\nonumber\\[-8pt]\\[-8pt]
&&{}+ E_t -\sum_{i=1}^{E_t}
\mathbf{1}_{ \{ \eta_i \leq(t - \tau_i) \wedge y \} } - \sum_{i=1}^{E_{(t-y)-}}
\mathbf{1}_{ \{ \eta_i > y \} }.\nonumber
\end{eqnarray}
\end{prop}

\begin{pf}
By (\ref{QueueFirstPrinciples}), we have that
%
%
\begin{eqnarray} \label{finalthirdqueue}
A_t(y) &=& \sum_{i=1}^{A_0(y)}
\mathbf{1}_{ \{ t - \tilde{\tau}_i
\leq y \} } \mathbf{1}_{ \{ t < \tilde{\eta}_i \} } + \sum
_{i=1}^{E_t} \mathbf{1}_{ \{ t - \tau_i \leq y \} }
\mathbf{1}_{ \{
t - \tau_i < \eta_i \} }
\nonumber
\\
&=& A_0(y) + \sum_{i=1}^{A_0(y)}(
\mathbf{1}_{ \{ t - \tilde{\tau
}_i \leq y \} } \mathbf{1}_{ \{ t < \tilde{\eta}_i \} }-1) + E_t
\\
&&{}+ \sum_{i=1}^{E_t}( \mathbf{1}_{ \{ t - \tau_i \leq y \} }
\mathbf{1}_{ \{ t - \tau_i < \eta_i \} }-1).\nonumber
\end{eqnarray}
However,
%
%
\begin{eqnarray} \label{doubledecomptwo}
&& 1- \mathbf{1}_{ \{ t - \tilde{\tau}_i \leq y \} }\mathbf{1}_{ \{
t < \tilde{\eta}_i \} }\nonumber
\\
&&\qquad = \mathbf{1}_{ \{ t - \tilde{\tau}_i \leq y \} }\mathbf{1}_{ \{
\tilde{\eta}_i \leq t \} } + \mathbf{1}_{ \{ t-\tilde{\tau}_i > y
\} }
\\
&&\qquad = (\mathbf{1}_{ \{ t - \tilde{\tau}_i \leq y \} }\mathbf{1}_{ \{
\tilde{\eta}_i \leq t \} } +
\mathbf{1}_{ \{ t - \tilde
{\tau}_i > y \} }\mathbf{1}_{ \{ - \tilde{\tau}_i+ \tilde{\eta}_i
\leq y \} } ) + \mathbf{1}_{ \{ t - \tilde{\tau}_i > y \}
}\mathbf{1}_{ \{ -\tilde{\tau}_i + \tilde{\eta}_i > y \} },
\nonumber
\end{eqnarray}
and, similarly,
%
%
\begin{eqnarray}\label{doubledecompone}
&& 1- \mathbf{1}_{ \{ t - \tau_i \leq y \} } \mathbf{1}_{ \{ t -
\tau_i < \eta_i \} } \nonumber
\\
&&\qquad = \mathbf{1}_{ \{ t - \tau_i \leq y \} } \mathbf{1}_{ \{ \eta_i
\leq t - \tau_i \} } +
\mathbf{1}_{ \{ t - \tau_i > y \} }
\\
&&\qquad = (\mathbf{1}_{ \{ t - \tau_i \leq y \} } \mathbf{1}_{ \{
\eta_i \leq t - \tau_i \} } +
\mathbf{1}_{ \{ t - \tau_i > y \} } \mathbf{1}_{ \{ \eta_i \leq y \} }
) + \mathbf{1}_{ \{ t -
\tau_i > y \} }
\mathbf{1}_{ \{ \eta_i > y \} }.
\nonumber
\end{eqnarray}
Substituting (\ref{doubledecompone}) and (\ref{doubledecomptwo})
into (\ref{finalthirdqueue}) and summing over $A_0(y)$ and $E_t$
completes the proof.
\end{pf}

We now provide an intuitive description of each of the terms appearing in
(\ref{AgesDDForm}). The first term represents the number of customers
in the system at time $t=0$ that have been in the system for less than
or equal to $y$ units of time, the second term represents the number
of departures by time $t \geq0$ of those initial customers that had total
service less than or equal to $y$ units of time at time $t=0$, and the third
term represents the number of initial customers whose total service
time is greater than $y$
units of time and had been in the system for less than or equal to $y$
units of time at time $t=0$ but have been in the system for greater than $y$ units of time at
time $t \geq0$. The fourth, fifth and sixth terms represent similar
quantities but for those customers that arrived to the system after time
$t=0$.

Now let $ D^0 = (D_t^0)_{t \geq0} \in
\mathbb{D}([0,\infty),\mathbb{D})$ be defined by setting
%
%
\begin{equation}
\label{D0def} D_t^0(y) = \sum
_{i=1}^{A_0(\infty)} \biggl( \mathbf{1}_{ \{ \tilde
{\eta}_i \leq t \wedge(y + \tilde{\tau}_i) \} } - \int
_0^{\tilde
{\eta}_i \wedge t \wedge(y + \tilde{\tau}_i)} h_{\tilde{\tau
}_i}(u) \,du \biggr),
\end{equation}
for $t \geq0$, $y \geq0$, and set $D_t^0(y)=0$ for $y < 0$. Also, let
$D = (D_t)_{t \geq0} \in\mathbb{D}([0,\infty),\mathbb{D})$ be
defined by setting
%
%
\begin{equation}
\label{Ddef}
\qquad D_t(y) = \sum_{i=1}^{E_t}
\biggl( \mathbf{1}_{ \{ \eta_i \leq(t -
\tau_i) \wedge y \} } - \int_0^{ \eta_i \wedge(t-\tau_i) \wedge y
}
h(u) \,du \biggr),\qquad t \geq0, y \geq0,
\end{equation}
and set $D_t(y)=0$ for $y < 0$.
It then follows from (\ref{AgesDDForm}) that for each $t \geq0$ and
$y \geq0$, we may write
%
%
\begin{eqnarray}
A_t(y) &=& A_0(y) + E_t -
D_t^0(y)- D_t(y)
\nonumber
\\
&&{}- \sum_{i=1}^{A_0(\infty)} \int
_{0}^{\tilde{\eta}_i \wedge t
\wedge(y + \tilde{\tau_i}) } h_{\tilde{\tau_i}}(u) \,du -\sum
_{i=1}^{E_t} \int_0^{\eta_i \wedge(t - \tau_i) \wedge y}
h(u) \,du \label{AgesDDForm2}
\\
&&{}- \sum_{i=A_0(y-t)+1}^{A_0(y)} \mathbf{1}_{ \{ -\tilde{\tau}_i +
\tilde{\eta}_i > y \} }
- \sum_{i=1}^{E_{(t-y)-}} \mathbf{1}_{ \{
\eta_i > y \} }.
\nonumber
\end{eqnarray}

The above expression for $A_t(y)$ will become useful in a moment.
However, we next move on to expressing the age process as a tempered
distribution-valued process using the
Schwartz space $\mathcal{S}$ defined in (\ref{schwartz}). In
particular, we associate
with the process $A$ defined in (\ref{QueueFirstPrinciples}) the
$\mathcal{S}'$-valued process $\mathcal{A}=(\mathcal{A}_{t})_{t \geq
0}$ such that for each $t \geq0$ and $\varphi\in
\mathcal{S}$ we set
%
%
\begin{equation}
\label{DPhiAssociation} \langle\mathcal{A}_t,\varphi\rangle= \int
_{\mathbb{R}} \varphi(y) \,dA_t(y).
\end{equation}
In a similar manner, we associate the $\mathcal{S}'$-valued processes
$\mathcal{D}^0=(\mathcal{D}_{t}^0)_{t \geq0}$ and $\mathcal
{D}=(\mathcal{D}_{t})_{t \geq0}$ with $D^{0}$ and $D$, respectively.
That is, for each $t \geq0$ and $\varphi\in\mathcal{S}$ we set
\[
\bigl\langle\mathcal{D}_t^0,\varphi\bigr\rangle= \int
_{\mathbb{R}} \varphi(y) \,dD_t^0(y)\quad
\mbox{and}\quad\langle\mathcal{D}_t,\varphi\rangle= \int
_{\mathbb{R}} \varphi(y) \,dD_t(y).
\]
We also associate the $\mathcal{S}'$-valued random
variable $\mathcal{A}_0$ with $A_0$ by setting
\[
\langle\mathcal{A}_0,\varphi\rangle= \int_{\mathbb{R}}
\varphi(y) \,dA_0(y),\qquad\varphi\in\mathcal{S}.
\]
It is straightforward to see that for each $t \geq0$,
the quantities $\mathcal{A}_t$, $\mathcal{D}^0_t$ and $\mathcal{D}_t$
are well-defined elements of $\mathcal{S}'$. Moreover, since for each
fixed $\varphi\in\mathcal{S}$
the sample paths of $(\langle\mathcal{A}_t, \varphi\rangle)_{t \geq
0}$, $(\langle\mathcal{D}^0_t, \varphi\rangle)_{t \geq0}$ and
$(\langle\mathcal{D}_t, \varphi\rangle)_{t \geq0}$ all lie in
$\mathbb{D}([0,\infty),\mathbb{R})$, $\mathbb{P}$-a.s., it follows
that $\mathcal{A},\mathcal{D}^0,\mathcal{D} \in
\mathbb{D}([0,\infty),\mathcal{S}')$, $\mathbb{P}$-a.s.

For\vspace*{1pt} the remainder of the paper, we now replace the hazard rate function
$h\dvtx \mathbb{R}_{+} \mapsto\mathbb{R}$ with a function $\tilde
{h}\dvtx \mathbb R\rightarrow\mathbb R$ such that $\tilde{h}(x) = h(x)$
for $x \geq0$ and $\tilde{h} \in C_b^\infty(\mathbb{R})$. In a\vspace*{-2pt}
similar manner, we replace the c.d.f. $F$ and c.c.d.f. $\bar{F}$ with
corresponding functions $\tilde{F}$ and $\hspace*{2pt}\tilde{\hspace*{-2pt}\bar{F}}$ such that
$\tilde{F},\hspace*{2pt}\tilde{\hspace*{-2pt}\bar{F}} \in C_b^\infty(\mathbb{R})$. For ease
of notation,\vspace*{-1pt} we continue to refer to $\tilde{h}$, $\tilde{F}$ and
$\hspace*{2pt}\tilde{\hspace*{-2pt}\bar{F}}$ as $h$, $F$ and $\bar{F}$, respectively.

Our next step is to use the expression (\ref{AgesDDForm2}) in order to
provide a convenient
expression for the tempered distribution-valued process $\mathcal{A}$.
We begin by noting that integrating test functions $\varphi\in
\mathcal{S}$ term-by-term in (\ref{AgesDDForm2}) it follows that for
each $t \geq0$ one has that
%
%
\begin{eqnarray}
\langle\mathcal{A}_t, \varphi\rangle&=& \langle\mathcal{A}_0,
\varphi\rangle- \bigl\langle\mathcal{D}_t^0 +
\mathcal{D}_t, \varphi\bigr\rangle
\nonumber
\\
&&{}- \sum_{i=1}^{A_0(\infty)} \int
_{-\tilde{\tau_i}}^{-\tilde{\tau
_i} + ( \tilde{\eta}_i \wedge t ) } \varphi(y) h(y) \,dy - \sum
_{i=1}^{E_t} \int_0^{\eta_i \wedge(t - \tau_i)}
\varphi(y) h(y) \,dy \label{AgesPhiForm}
\\
&&{}- \int_{\mathbb R_+} \varphi(y) \,d \Biggl(\sum
_{i=A_0(y-t)+1}^{A_0(y)} \mathbf{1}_{ \{ -\tilde{\tau}_i + \tilde
{\eta}_i > y \} } + \sum
_{i=1}^{E_{(t-y)-}} \mathbf{1}_{ \{ \eta_i
> y \} }
\Biggr).
\nonumber
\end{eqnarray}

The following two propositions now allow us to further
simplify the expression in (\ref{AgesPhiForm}). We first have the following.

\begin{prop}\label{PropositionHazard}
For each $t \geq0$,
\[
\sum_{i=1}^{A_0(\infty)} \int_{-\tilde{\tau_i}}^{-\tilde{\tau_i}
+ ( \tilde{\eta}_i \wedge t ) }
\varphi(y) h(y) \,dy + \sum_{i=1}^{E_t} \int
_0^{\eta_i \wedge(t - \tau_i)} \varphi(y) h(y) \,dy = \int
_0^{t} \langle\mathcal{A}_s,
\varphi h \rangle \,ds.
\]
\end{prop}

\begin{pf}For each $t \geq0$,
\begin{eqnarray*}
&&\sum_{i=1}^{A_0(\infty)} \int
_{-\tilde{\tau_i}}^{-\tilde{\tau
_i} + ( \tilde{\eta}_i \wedge t ) } \varphi(y) h(y) \,dy + \sum
_{i=1}^{E_t} \int_0^{\eta_i \wedge(t - \tau_i)}
\varphi(y) h(y) \,dy
\\
&&\qquad =\sum_{i=1}^{A_0(\infty)} \int
_0^t \mathbf{1}_{ \{ 0 \leq s \leq
\tilde{\eta}_i \} } \varphi(s -
\tilde{\tau}_i ) h(s-\tilde{\tau}_i) \,ds
\\
&&\quad\qquad{}+ \sum_{i=1}^{E_t} \int
_0^t \mathbf{1}_{ \{ 0 \leq s - \tau_i
\leq\eta_i \} } \varphi( s -
\tau_i ) h(s-\tau_i) \,ds
\\
&&\qquad = \int_0^t \Biggl(\sum
_{i=1}^{A_0(\infty)} \mathbf{1}_{ \{ 0 \leq
s \leq\tilde{\eta}_i \} } \varphi(s -
\tilde{\tau}_i) h(s-\tilde{\tau}_i)
\\
&&\hspace*{51pt}{} + \sum
_{i=1}^{E_t} \mathbf{1}_{ \{ 0 \leq s - \tau_i
\leq\eta_i \} } \varphi(s -
\tau_i) h(s-\tau_i) \Biggr) \,ds
\\
&&\qquad = \int_0^t \langle\mathcal{A}_s,
\varphi h \rangle \,ds.
\end{eqnarray*}
This completes the proof.
\end{pf}

Next, we have the following.

%
\begin{prop}\label{PropositionDeriv}
For each $t \geq0$,
\begin{eqnarray*}
&&-\int_{\mathbb R_+} \varphi(y) \,d \Biggl( \sum
_{i=A_0(y-t)+1}^{A_0(y)} \mathbf{1}_{ \{ -\tilde{\tau}_i + \tilde
{\eta}_i > y \} }+ \sum
_{i=1}^{E_{(t-y)-}} \mathbf{1}_{ \{ \eta_i >
y \} } \Biggr)
\\
&&\qquad = E_t \varphi(0) + \int_0^t
\bigl\langle\mathcal{A}_s, \varphi' \bigr\rangle \,ds.
\end{eqnarray*}
\end{prop}

\begin{pf}Let $t \geq0$. Then,
integrating by parts we have that
\begin{eqnarray*}
&&- E_t \varphi(0) - \int_{\mathbb R_+} \varphi(y) \,d
\Biggl(\sum_{i=A_0(y-t)+1}^{A_0(y)} \mathbf{1}_{ \{ -\tilde{\tau}_i +
\tilde
{\eta}_i > y \} }
+ \sum_{i=1}^{E_{(t-y)-}} \mathbf{1}_{ \{ \eta_i
> y \} }
\Biggr)
\\
&&\qquad = \int_{\mathbb R_+} \Biggl(\sum_{i=A_0(y-t)+1}^{A_0(y)}
\mathbf{1}_{ \{ -\tilde{\tau}_i + \tilde{\eta}_i > y \} }+\sum
_{i=1}^{E_{(t-y)-}}
\mathbf{1}_{ \{ \eta_i > y \} } \Biggr) \varphi'(y) \,dy
\\
&&\qquad = \int_{\mathbb R_+} \Biggl(\sum_{i=1}^{A_0(\infty)}
\mathbf{1}_{
\{ \tilde{\tau}_i \geq-y, -\tilde{\tau}_i+\tilde{\eta}_i > y,
\tilde{\tau}_i + y < t \} } + \sum_{i=1}^{E_t}
\mathbf{1}_{ \{ \eta
_i > y, \tau_i + y < t \} } \Biggr) \varphi'(y) \,dy
\\
&&\qquad = \sum_{i=1}^{A_0(\infty)} \int
_{\mathbb R_+} \mathbf{1}_{ \{
\tilde{\tau}_i \geq-y, -\tilde{\tau}_i+\tilde{\eta}_i > y,
\tilde{\tau}_i + y < t \} } \varphi'(y) \,dy
\\
&&\quad\qquad{}+ \sum_{i=1}^{E_t} \int
_{\mathbb R_+} \mathbf{1}_{ \{ \eta_i > y,
\tau_i + y < t \} } \varphi'(y) \,dy
\\
&&\qquad = \sum_{i=1}^{A_0(\infty)} \int
_0^{t} \mathbf{1}_{ \{ 0 \leq s -
\tilde{\tau}_i \leq-\tilde{\tau}_i+ \tilde{\eta}_i \} } \varphi
'(s-\tilde{\tau}_i) \,ds
\\
&&\quad\qquad{}+ \sum_{i=1}^{E_t} \int
_0^t \mathbf{1}_{ \{ 0 \leq s - \tau_i
\leq\eta_i \} }
\varphi'(s-\tau_i) \,ds
\\
&&\qquad = \int_0^t \Biggl(\sum
_{i=1}^{A_0(\infty)} \mathbf{1}_{ \{ 0 \leq
s - \tilde{\tau}_i \leq-\tilde{\tau}_i+\tilde{\eta}_i \}
}\varphi'(s-\tilde{\tau}_i) + \sum
_{i=1}^{E_t} \mathbf{1}_{ \{ 0
\leq s - \tau_i \leq\eta_i \} }
\varphi'(s-\tau_i) \Biggr) \,ds
\\
&&\qquad = \int_0^t \biggl( \int_{\mathbb R_+}
\varphi'(u)\,d\mathcal{A}_s(u) \biggr) \,ds
\\
&&\qquad = \int_0^t \bigl\langle\mathcal{A}_s,
\varphi' \bigr\rangle \,ds.
\end{eqnarray*}
This completes the proof.
\end{pf}

Now note that combining Propositions~\ref{PropositionHazard} and
\ref{PropositionDeriv} with system equation (\ref{AgesPhiForm}), one
finds that for each $t \geq0$ and $\varphi\in\mathcal{S}$,
%
%
\begin{eqnarray}\label{AgesSystemEquation}
\langle\mathcal{A}_t, \varphi\rangle &=& \langle\mathcal{A}_0, \varphi\rangle+ \bigl\langle\mathcal{E}_t -
\mathcal{D}_t^0- \mathcal{D}_t, \varphi\bigr\rangle
\nonumber\\[-8pt]\\[-8pt]
&&{}- \int_0^t \langle\mathcal{A}_s, h \varphi\rangle \,ds + \int_0^t \bigl\langle
\mathcal{A}_s, \varphi' \bigr\rangle \,ds,\nonumber
\end{eqnarray}
where we define the $\mathcal{S}'$-valued process $\mathcal
{E}=(\mathcal{E}_t)_{t \geq0}$ to be such that $\langle\mathcal
{E}_t, \varphi\rangle=
E_t \varphi(0)$ for each $\varphi\in\mathcal{S}$ and $t \geq0$. In
general, we refer to (\ref{AgesSystemEquation}) as the semi-martingale
decomposition of $\mathcal{A}$. This will become clear in
Section~\ref{SecMartingale} where we show that the process $( \mathcal{D}_t^0+
\mathcal{D}_t)_{t \geq0}$ is a martingale.

\subsection{Residual service time process}\label{SubsecResidualEquations}

We next move on to analyzing the residual
service time process $\mathcal{R}$. As in Section~\ref{SubsecAgeEquations},
we assume that we have a $G/\mathit{GI}/\infty$ queue in which customers arrive
to the system according to a general arrival process
$(E_t) _{t \geq0} \in\mathbb{D}([0,\infty),\mathbb{R})$, where we
assume that $E_0=0$, $\mathbb{P}$-a.s. For each $i \geq1$, we denote by
\begin{eqnarray*}
\tau_i &=& \inf\{t \geq0 \dvtx  E_t \geq i\}
\end{eqnarray*}
the time of the $i$th customer arrival to the system after time $t=0$
and we let
$\eta_i$ be the service time
of the $i$th customer to arrive to the system after time
$t=0$. We assume that $\{\eta_i, i \geq1\}$ is an i.i.d. sequence of
nonnegative, mean 1 random variables with cumulative distribution
function $F$ and probability density function~$f$. We also assume in
this subsection
that the hazard rate function $h$ of $F$ satisfies Assumption~\ref
{boundedhazardassumption} of Section~\ref{SubsecAgeEquations}.

Now let $R=(R_t)_{t \geq0} \in\mathbb{D}([0,\infty),\mathbb{D})$
be such that for each $t \geq0$ and $y \in\mathbb{R}$, the quantity
$R_t(y)$ denotes the number of customers
in the system at time $t \geq0$ that
have less than or equal to $y \in\mathbb R$ units of service
remaining. For $y < 0$, we interpret $R_t(y)$ as the number of
customers who have departed from
the system by time $t+y$. Thus, $(R_t(y))_{y \in\mathbb{R}}$ not only
keeps tracks of the\vadjust{\goodbreak} residual service times of those customers present
in the system at time $t$, but it also records the departure times of
all customers who have departed from the system by time $t$. We assume
that at time $t=0$ there are $R_0(y)$ customers in the system that have less
than or equal to $y$ units of service time remaining. By first
principles, it then follows that for each $t \geq0$ and $y \in\mathbb
{R}$ we may write
%
%
\begin{equation}
\label{ResidualFirstPrinciple} R_t(y) = R_0(t+y) + \sum
_{i=1}^{E_t} \mathbf{1}_{ \{ \eta_i - (t -
\tau_i) \leq y \} }.
\end{equation}

The following proposition now presents an alternative expression for
the right-hand side of
(\ref{ResidualFirstPrinciple}).

%
\begin{prop}\label{PropAlternativeFirstPrinciple}
For each $t \geq0$ and $y \in\mathbb{R}$,
\begin{eqnarray}
R_t(y)
\nonumber
&=& R_0(y) + \bigl(R_0(t+y)-R_0(y)
\bigr)
\nonumber\\[-8pt]\\[-8pt]\nonumber
&&{} + \sum_{i=1}^{E_t} \mathbf{1}_{ \{ \eta_i \leq y \} }
+ \sum_{i=1}^{E_t} \mathbf{1}_{ \{ \eta_i > y, (\tau_i+\eta_i) - t \leq y
\} }.
\label{ResidualsDDForm}
\end{eqnarray}
\end{prop}

\begin{pf}
By (\ref{ResidualFirstPrinciple}),
%
%
\begin{eqnarray}\label{FirstEqPropResidualFirst}
R_t(y) &=& R_0(t+y) + \sum_{i=1}^{E_t}
\mathbf{1}_{ \{ \eta_i - (t -
\tau_i) \leq y \} }
\nonumber\\[-8pt]\\[-8pt]\nonumber
&=& R_0(y)+ \bigl(R_0(t+y)-R_0(y) \bigr) +
\sum_{i=1}^{E_t} \mathbf{1}_{ \{ \eta_i - (t - \tau_i) \leq y \}}.
\end{eqnarray}
However, note that
%
%
\begin{equation}
\label{indicatorSub} \mathbf{1}_{ \{ \eta_i - (t - \tau_i) \leq y \} }
= \mathbf{1}_{ \{
\eta_i \leq y \} } +
\mathbf{1}_{ \{ \eta_i > y, \eta_i - (t - \tau
_i) \leq y \} }.
\end{equation}
Substituting (\ref{indicatorSub}) into
(\ref{FirstEqPropResidualFirst}) and summing over $E_t$, completes
the proof.
\end{pf}

We now give an intuitive description for each of the terms appearing in
(\ref{ResidualsDDForm}). The first term represents the number of
customers in the system at time $t=0$ with less than or equal to $y$
units of service time remaining. The second term represents the number
of customers in the system at time $t=0$ with greater than $y$ units of
total service time but at time $t \geq0$ have less than or equal to $y$
units of service time remaining. The third and fourth terms in (\ref
{ResidualsDDForm})
have analogous descriptions but for those customers that arrive to the system
after time $t=0$.

Now let $G = (G_t)_{t \geq0}
\in\mathbb{D}([0,\infty),\mathbb{D})$ be defined by setting
\[
G_t(y) = \sum_{i=1}^{E_t}
\bigl(\mathbf{1}_{ \{ \eta_i \leq y \}
}-F(y) \bigr),\qquad t \geq0, y \in\mathbb{R}.\vadjust{\goodbreak}
\]
By Proposition~\ref{PropAlternativeFirstPrinciple}, it then follows
that for each $t \geq0$
and $y \in\mathbb{R}$, $R_t(y)$ may be written as
%
%
\begin{eqnarray}\label{ResidualsDDForm2}
R_t(y) &=& R_0(y) + \bigl(R_0(t+y)-R_0(y)
\bigr) + G_t(y)+ E_t F(y)
\nonumber\\[-8pt]\\[-8pt]\nonumber
&&{} + \sum_{i=1}^{E_t} \mathbf{1}_{ \{ \eta_i > y, (\tau_i+\eta_i) - t \leq y\} }.
\end{eqnarray}

The above representation for $R_t(y)$ will be useful in a moment.
However, we first proceed to define tempered distribution-valued
versions of the processes defined above. In particular, we let
$\mathcal{R}=(\mathcal{R}_t)_{t \geq0}$ be the $\mathcal
{S}'$-valued process associated with $R$ such
that for each $t \geq0$ and $\varphi\in\mathcal{S}$ we have that
\[
\langle\mathcal{R}_t,\varphi\rangle= \int_{\mathbb{R}}
\varphi(y) \,dR_t(y)
\]
and we let
$\mathcal{G}=(\mathcal{G}_t)_{t \geq0}$ be the $\mathcal
{S}'$-valued process associated with $G$ such
that for each $t \geq0$ and $\varphi\in\mathcal{S}$ we have that
\[
\langle\mathcal{G}_t,\varphi\rangle= \int_{\mathbb{R}}
\varphi(y) \,dG_t(y).
\]
We also define the elements $\mathcal{F}$ and $\mathcal{F}_e$ of
$\mathcal{S}'$ by setting
\begin{eqnarray*}
\langle\mathcal{F}, \varphi\rangle&\equiv& \int_0^{\infty
}
\varphi(x)\,dF(x),\qquad\varphi\in\mathcal{S},
\end{eqnarray*}
and
\begin{eqnarray*}
\langle\mathcal{F}_e, \varphi\rangle&\equiv& \int
_0^{\infty
}\varphi(x) \bigl(1-F(x)\bigr)\,dx,\qquad\varphi
\in\mathcal{S}.
\end{eqnarray*}

Now note that integrating test functions $\varphi\in\mathcal{S}$
against each of the terms in~(\ref{ResidualsDDForm2}), it follows that for each $\varphi\in
\mathcal{S}$ and $t \geq0$,
%
%
\begin{eqnarray}\label{ResidualsPhiForm}
\qquad \langle\mathcal{R}_t, \varphi\rangle &=& \langle
\mathcal{R}_0,\varphi\rangle+ \int_\mathbb R
\varphi(y) \,d\bigl(R_0(t+y)-R_0(y)\bigr)
\nonumber\\[-8pt]\\[-8pt]\nonumber
&&{} + \langle\mathcal{G}_t, \varphi\rangle+ E_t \langle
\mathcal{F}, \varphi\rangle+ \int_\mathbb R\varphi(y) \,d \Biggl(
\sum_{i=1}^{E_t} \mathbf{1}_{ \{ \eta_i > y, (\tau_i+\eta_i) - t
\leq y \} }
\Biggr).
\end{eqnarray}

The following proposition now allows us to simplify the right-hand side of~(\ref{ResidualsPhiForm}).

\begin{prop}\label{PropositionDeriv2}
For each $t \geq0$,
%
%
\begin{eqnarray}\label{ResidualProp}
\qquad && \int_\mathbb R\varphi(y) \,d\bigl(R_0(t+y)-R_0(y)
\bigr) + \int_\mathbb R\varphi(y)\, d \Biggl( \sum
_{i=1}^{E_t} \mathbf{1}_{ \{ \eta_i >
y, (\tau_i+\eta_i) - t \leq y \} } \Biggr)
\nonumber\\[-8pt]\\[-8pt]\nonumber
&&\qquad = -\int_0^t \bigl\langle
\mathcal{R}_s, \varphi' \bigr\rangle \,ds.
\end{eqnarray}
\end{prop}

\begin{pf}
The proof parallels the proof of Proposition
\ref{PropositionDeriv}. Let $t \geq0$. Then, integrating by parts,
we have that
\begin{eqnarray*}
&&- \int_\mathbb R \varphi(y) \,d\bigl(R_0(t+y) -
R_0(y)\bigr) - \int_\mathbb R \varphi(y) \,d \Biggl(
\sum_{i=1}^{E_t} \mathbf{1}_{ \{ \eta_i >
y, (\tau_i+\eta_i) - t \leq y \} }
\Biggr)
\\
&&\qquad = \int_\mathbb R \Biggl(\sum_{i=1}^{R_0(\infty)}
\mathbf{1}_{ \{
y < \tilde{\eta}_i \leq t + y \} } \Biggr) \varphi'(y) \,dy
+ \int_\mathbb R \Biggl(\sum_{i=1}^{E_t}
\mathbf{1}_{ \{ \eta_i >
y, (\tau_i+\eta_i) - t \leq y \} } \Biggr) \varphi'(y) \,dy
\\
&&\qquad = \sum_{i=1}^{R_0(\infty)} \int
_\mathbb R \mathbf{1}_{ \{ y <
\tilde{\eta}_i \leq t + y \} } \varphi'(y)
\,dy + \sum_{i=1}^{E_t} \int
_\mathbb R \mathbf{1}_{ \{ \eta_i > y, (\tau_i+\eta_i) - t
\leq y \} } \varphi'(y)
\,dy
\\
&&\qquad = \sum_{i=1}^{R_0(\infty)} \int
_\mathbb R \mathbf{1}_{ \{ \tilde
{\eta}_i - t \leq y < \tilde{\eta}_i \} } \varphi'(y)
\,dy + \sum_{i=1}^{E_t} \int
_\mathbb R \mathbf{1}_{ \{ (\tau_i+\eta_i) - t
\leq y < \eta_i \} } \varphi'(y)
\,dy
\\
&&\qquad =7 \sum_{i=1}^{R_0(\infty)} \int
_0^t \varphi'( \tilde{
\eta}_i - s ) \,ds + \sum_{i=1}^{E_t}
\int_0^{t} \mathbf{1}_{ \{ \tau_i
\leq s \} }
\varphi'(\tau_i+\eta_i-s) \,ds
\\
&&\qquad = \int_0^t \Biggl(\sum
_{i=1}^{R_0(\infty)} \varphi'( \tilde{\eta
}_i - s ) + \sum_{i=1}^{E_t}
\mathbf{1}_{ \{ \tau_i \leq s \} } \varphi'(\tau_i+
\eta_i-s) \Biggr) \,ds
\\
&&\qquad = \int_0^t \biggl(\int_{\mathbb R_+}
\varphi' \,dR_s(u) \biggr) \,ds
\\
&&\qquad = \int_0^t \bigl\langle\mathcal{R}_s,
\varphi' \bigr\rangle \,ds.
\end{eqnarray*}
This completes the proof.
\end{pf}

Substituting (\ref{ResidualProp}) into (\ref{ResidualsPhiForm}), one
now obtains that for each $t \geq0$ and $\varphi\in\mathcal{S}$,
%
%
\begin{equation}
\label{ResidualsSystemEquation} \langle\mathcal{R}_t, \varphi\rangle=
\langle
\mathcal{R}_0,\varphi\rangle+ \langle\mathcal{G}_t,
\varphi\rangle+ E_t \langle\mathcal{F}, \varphi\rangle-\int
_0^t \bigl\langle\mathcal{R}_s,
\varphi' \bigr\rangle \,ds.
\end{equation}
We refer to (\ref{ResidualsSystemEquation}) as the semi-martingale
decomposition of $\mathcal{R}$. This is due to the fact that in
Section~\ref{SecMartingale} it
will be shown that the process $\mathcal{G}$ in
(\ref{ResidualsSystemEquation}) is a martingale. We also point out the
similarity of (\ref{ResidualsSystemEquation}) with (4) of
\cite{DM08b}.

\section{Regulator map result}\label{SecRegulatorMap}

Let $B\dvtx \mathcal{S} \mapsto\mathcal{S}$ be a continuous linear
operator and for each $ \mu\in\mathbb{D}([0,T],\mathcal{S}')$,
consider the solution $\nu\in\mathbb{D}([0,T],\mathcal{S}')$ to the
integral equation
%
%
\begin{equation}
\label{integralequation} \langle\nu_t, \varphi\rangle= \langle
\mu_t, \varphi\rangle+ \int_0^t
\langle\nu_s, B \varphi\rangle \,ds,\qquad t \in[0,T], \varphi\in
\mathcal{S}.
\end{equation}
In this section, we first show that under
some mild restrictions on $B$, (\ref{integralequation}) defines a
continuous function $\Psi_B\dvtx  \mathbb{D}([0,T],\mathcal{S}')
\rightarrow\mathbb{D}([0,T],\mathcal{S}')$ mapping $ \mu$ to $\nu
$. We then proceed in
Sections~\ref{secageregulator}~and~\ref{secresidualsregulator}
to study particular continuous linear
operators associated with the age process and the residual service time
process, respectively.

We begin with the following definition from
\cite{KallianpurPerezAbreuMap}.

%
\begin{definition}\label{defco1strong}
A family $( S_t )_{t \geq0}$ of linear operators on $\mathcal{S}$ is said
to be a strongly-continuous $(C_0,1)$ semigroup if the following three
conditions are satisfied:
\begin{longlist}[(3)]
\item[(1)] $S_0 = I$,\label{C01part1} where $I$ is the identity operator, and, for all
$s,t \geq0$, $S_sS_t = S_{s+t}$.
\item[(2)] The\label{C01part2} map $t \rightarrow S_t $ is
continuous in the strong topology of $L(\mathcal{S},\mathcal{S})$.
That is, if $t_n \rightarrow t$ in
$\mathbb R_{+}$, then for any bounded subset $K \subset\mathcal{S}$
and $p \geq
1$,
\[
\sup_{\varphi\in K} \llVert S_{t_n} \varphi- S_t
\varphi\rrVert_p \rightarrow0.
\]
\item[(3)] For\label{C01part3} each $q \geq0$, there exist numbers $M_q$, $\sigma_q$ and
$p \geq q$ such that for all $\varphi\in\mathcal{S}$ and $t \geq0$,
\[
\llVert S_t \varphi\rrVert_q \leq M_q
e^{\sigma_q t} \llVert\varphi\rrVert_p.
\]
\end{longlist}
\label{C01def}
\end{definition}

%
\begin{rem}
Note that condition~\hyperref[C01part2]{(2)} of Definition~\ref{defco1strong}
is stronger
than the corresponding condition for a $(C_0,1)$ semigroup as given,
for example,
in \cite{KallianpurPerezAbreuMap}. The weaker
definition in \cite{KallianpurPerezAbreuMap} only requires that the
map $t \rightarrow S_t \varphi$ be
continuous in the weak topology of $L(\mathcal{S},\mathcal{S})$. That
is, if
$t_n \rightarrow t$ in $\mathbb R_{+}$, then for all $\varphi\in
\mathcal{S}$
and $m \geq1$, $\llVert S_{t_n} \varphi- S_t \varphi\rrVert _m
\rightarrow
0$.
\end{rem}

Recall now that for a family $( S_t )_{t \geq0}$ of linear operators
on $\mathcal{S}$, the
infinitesimal generator $B$ of $( S_t )_{t \geq0}$ is defined to be
such that
\begin{eqnarray*}
B \varphi&=& \lim_{t \rightarrow0} \frac{S_t \varphi- \varphi}{t} \qquad
\mbox{in }
\mathcal{S},
\end{eqnarray*}
for all such $\varphi\in\mathcal{S}$ that the limit on the
right-hand side above exits. We refer to such $\varphi\in\mathcal
{S}$ as $\mathcal{D}(B)$, the domain of $B$. We now have the following
result, which is our main result regarding the integral equation (\ref
{integralequation}).

%
\begin{teo}\label{ThmRegulatorMap}
Let $B \in L(\mathcal{S},\mathcal{S})$ be the infinitesimal generator
of a strong\-ly-continuous
$(C_0,1)$ semigroup $( S_t )_{t \geq0}$. Then, for each $
\mu\in\mathbb{D}([0,T],\mathcal{S}')$, the equation
{(\ref{integralequation})} has a unique solution given by
%
%
\begin{equation}
\label{integralsolution} \langle\nu_t, \varphi\rangle= \langle
\mu_t, \varphi\rangle+ \int_0^t
\langle\mu_s, S_{t-s} B \varphi\rangle \,ds,\qquad t \in[0,T],
\varphi\in\mathcal{S}.
\end{equation}
Furthermore, {(\ref{integralsolution})} defines a continuous function
$\Psi_B\dvtx \mathbb{D}([0,T],\mathcal{S}') \rightarrow
\mathbb{D}([0,T],\break  \mathcal{S}')$ mapping $ \mu$ to $\nu$.
\end{teo}

\begin{pf}
That $\Psi_B$ is a well-defined function from $ \mathbb
{D}([0,T],\mathcal{S}')$ to
$\mathbb{D}([0,T],\break \mathcal{S}')$ and the form of the solution
(\ref{integralsolution}) follows from Theorem 2.1 of
\cite{KallianpurPerezAbreuMap} (see also Corollary 2.2 of \cite
{KallianpurPerezAbreuMap}).

We now show that $\Psi_B$ is continuous. By (\ref{integralsolution}),
it suffices to show that the function mapping $\mathbb
{D}([0,T],\mathcal{S}')$ to $\mathbb{D}([0,T],\mathcal{S}')$ defined
by $\mu\mapsto
\int_0^\cdot B^* S_{\cdot- s}^* \mu_s \,ds$, where $B^*$ and $S_t^{*}$
denote the adjoint operators of $B$ and $S_t$,
respectively, is continuous. Let $( \mu^n )_{n \geq1}$ be a
sequence converging to $\mu$ in $\mathbb{D}([0,T],\mathcal{S}')$.
Then, by Definition~\ref{defstrongskor} and the comment below it
(see also Theorem 2.4.1 of \cite{KallianpurXiong}), there exists a
sequence $(\lambda^n)_{n \geq1}$ of strictly increasing
homeomorphisms of the interval $[0,T]$ such
that for each bounded set $K \subset\mathcal{S}$,
%
%
\begin{eqnarray}\label{Dconvergeregulator}
\sup_{0 \leq t \leq T} \sup_{\varphi\in K}\bigl
\llvert\bigl\langle\mu_t^n - \mu_{\lambda^n_t},
\varphi\bigr\rangle\bigr\rrvert\rightarrow0\quad\mbox{and}\quad\sup
_{0 \leq t \leq T} \bigl\llvert\lambda^n_t - t
\bigr\rrvert\rightarrow0
\nonumber\\[-12pt]\\[-12pt]
\eqntext{\mbox{as }n \rightarrow\infty.}
\end{eqnarray}
Moreover, it suffices to
consider homeomorphisms $(\lambda^n)_{n \geq1}$ that are absolutely
continuous with respect to Lebesgue measure on $[0,T]$ having
corresponding derivatives $(\dot{\lambda}^n)_{n \geq1}$ satisfying
$\llVert
\dot{\lambda}^n - 1 \rrVert _T \rightarrow0$ as $n \rightarrow
\infty$
(see pages 112--114 of~\cite{Billingsley99}). It then follows that for each
$t \in[0,T]$ and $\varphi\in\mathcal{S}$ we may write
%
%
\begin{eqnarray} \label{strongcontinequality}
&&\biggl\llvert\biggl\langle\int_0^t B^*
S_{t - s}^* \mu^n_s \,ds - \int
_0^{\lambda^n_t} B^* S_{\lambda^n_t - s}^*
\mu_s \,ds, \varphi\biggr\rangle\biggr\rrvert
\nonumber
\\
&&\qquad  = \biggl\llvert\biggl\langle\int_0^t B^*
S_{t - s}^* \mu^n_s \,ds - \int
_0^t B^* S_{\lambda^n_t - \lambda^n_s}^* \mu_{\lambda^n_s}
\dot{\lambda}^n_s \,ds, \varphi\biggr\rangle\biggr
\rrvert\nonumber
\\
&&\qquad  \leq \bigl\llVert\dot{\lambda}^n - 1 \bigr\rrVert
_T \biggl\llvert\biggl\langle\int_0^t
B^* S_{\lambda^n_t - \lambda^n_s}^* \mu_{\lambda
^n_s} \,ds, \varphi\biggr\rangle\biggr
\rrvert\nonumber
\\
&&\quad\qquad{}  + \biggl\llvert\biggl\langle\int_0^t
B^* \bigl( S_{t-s}^* - S_{\lambda^n_t - \lambda^n_s }^* \bigr) \mu
_{\lambda^n_s}
\,ds, \varphi\biggr\rangle\biggr\rrvert
\\
&&\quad\qquad{}+ \biggl\llvert\biggl\langle\int_0^t B^*
S_{t-s}^* \bigl( \mu^n_s - \mu
_{\lambda^n_s} \bigr) \,ds, \varphi\biggr\rangle\biggr\rrvert
\nonumber
\\
&&\qquad  = \bigl\llVert\dot{\lambda}^n - 1 \bigr\rrVert_T
\biggl\llvert\int_0^t \langle
\mu_{\lambda^n_s}, S_{\lambda^n_t - \lambda^n_s} B \varphi\rangle \,ds
\biggr\rrvert
\nonumber
\\
&&\qquad\quad{} + \biggl\llvert\int_0^t \bigl
\langle\mu_{\lambda^n_s}, ( S_{t-s} - S_{\lambda^n_t - \lambda^n_s } )
B \varphi
\bigr\rangle \,ds \biggr\rrvert
+ \biggl\llvert\int_0^t
\bigl\langle\mu^n_s - \mu_{\lambda
^n_s},
S_{t-s} B \varphi\bigr\rangle \,ds \biggr\rrvert.\hspace*{-15pt}\nonumber
\end{eqnarray}
In order to complete the proof, it now suffices to show that for each
bounded subset $K \subset\mathcal{S}$, the following three limits hold:
%
%
\begin{eqnarray}
\qquad \sup_{t \in[0,T]} \sup_{\varphi\in K} \bigl\llVert\dot{
\lambda}^n - 1 \bigr\rrVert_T \biggl\llvert\int
_0^t \langle\mu_{\lambda^n_s},
S_{\lambda^n_t - \lambda^n_s} B \varphi\rangle \,ds \biggr\rrvert &\rightarrow&0 \qquad\mbox{as }n \rightarrow\infty\label{limit1},
\\
\sup_{t \in[0,T]} \sup_{\varphi\in K} \biggl\llvert\int
_0^t \bigl\langle\mu_{\lambda^n_s}, (
S_{t-s} - S_{\lambda^n_t - \lambda
^n_s } ) B \varphi\bigr\rangle \,ds\biggr\rrvert
&\rightarrow&0 \qquad\mbox{as }n \rightarrow\infty\label{limit2},
\\
\sup_{t \in[0,T]} \sup_{\varphi\in K} \biggl\llvert\int
_0^t \bigl\langle\mu^n_s
- \mu_{\lambda^n_s}, S_{t-s} B \varphi\bigr\rangle \,ds \biggr\rrvert
&\rightarrow&0 \qquad\mbox{as }n \rightarrow\infty\label{limit3}.
\end{eqnarray}

We begin with (\ref{limit1}).
First note that for each bounded subset $K \subset\mathcal{S}$,
%
%
\begin{equation}
\label{strongbounded1} \sup_{s \in[0,T]} \sup_{\varphi\in K} \bigl
\llvert\langle\mu_s, \varphi\rangle\bigr\rrvert< \infty.
\end{equation}
Now let $K \subset\mathcal{S}$ be an arbitrary bounded set. We show
that the set $K' \equiv\{ S_u B \varphi, u \in
[0,T], \varphi\in K \}$ is bounded in $\mathcal{S}$ as well. Indeed,
note that by Definition~\ref{defco1strong}, we have that for each
$q \geq0$ there exist
$M_q$, $\sigma_q$ and $p \geq q$ such that
%
%
\begin{equation}
\label{strongbounded2} \sup_{\varphi\in K'}\llVert\varphi\rrVert
_{q}= \sup_{u \in
[0,T],\varphi\in K} \llVert S_u B
\varphi\rrVert_q \leq M_q e^{\sigma_q T} \sup
_{\varphi\in K} \llVert B \varphi\rrVert_p < \infty,
\end{equation}
where the final inequality follows from the fact that continuous
linear operators from $\mathcal{S}$ to $\mathbb{R}$ map bounded sets
of $\mathcal{S}$ to bounded sets of $\mathbb{R}$ (see Theorem 1.2.1 of~\cite{KallianpurXiong}). Thus, $K' = \{ S_u B \varphi, u \in
[0,T], \varphi\in K \}$ is bounded and so by (\ref{strongbounded1}),
\begin{eqnarray*}
\sup_{s \in[0,T]} \sup_{u \in[0,T],\varphi\in K } \bigl\llvert\langle
\mu_s, S_u B \varphi\rangle\bigr\rrvert&<&\infty.
\end{eqnarray*}
It therefore follows since $\llVert \dot{\lambda}^n - 1 \rrVert _T
\rightarrow0 $ as $n \rightarrow\infty$, that
%
%
\begin{eqnarray}\label{limit1proof}
&& \sup_{t \in[0,T]} \sup_{\varphi\in K} \bigl\llVert
\dot{\lambda}^n - 1 \bigr\rrVert_T \biggl\llvert\int
_0^t \langle\mu_{\lambda^n_s},
S_{\lambda^n_t - \lambda^n_s} B \varphi\rangle \,ds \biggr\rrvert
\nonumber\\[-8pt]\\[-8pt]\nonumber
&& \qquad \leq\bigl\llVert\dot{\lambda}^n - 1 \bigr\rrVert
_T T \sup_{s
\in[0,T]} \sup_{u \in[0,T],\varphi\in K }
\bigl\llvert\langle\mu_s, S_u B \varphi\rangle
\bigr\rrvert \,ds \rightarrow0,
\end{eqnarray}
as $n \rightarrow\infty$, which implies (\ref{limit1}).

We next prove the limit (\ref{limit2}). First note that by Lemma 2.2 of
\cite{KallianpurSurvey}, there exist $\theta\geq0$ and $q \geq1$
such that
%
%
\begin{eqnarray}\label{eq2}
&& \sup_{t \in[0,T]} \sup_{\varphi\in K} \biggl\llvert
\int_0^t \bigl\langle\mu_{\lambda^n_s}, (
S_{t-s} - S_{\lambda^n_t - \lambda
^n_s } ) B \varphi\bigr\rangle \,ds\biggr\rrvert\nonumber
\\
&&\qquad \leq T \sup_{s\in[0,T]}\sup_{0 \leq w \leq v \leq T } \sup
_{\varphi\in K} \bigl\llvert\bigl\langle\mu_s,
(S_{v - w} - S_{\lambda
^n_v - \lambda^n_w}) B \varphi\bigr\rangle\bigr\rrvert
\\
&&\qquad \leq T \theta\sup_{0 \leq w \leq v \leq T } \sup_{\varphi\in K} \bigl
\llVert(S_{v - w} - S_{\lambda^n_v -
\lambda^n_w}) B \varphi\bigr\rrVert
_q.\nonumber
\end{eqnarray}
Next, note that for $w \leq v$ we may write
\begin{eqnarray*}
S_{v - w} - S_{\lambda^n_v -
\lambda^n_w} &=& \mathbf{1}_{ \{ (v - w)-(\lambda^n_v - \lambda
^n_w) \geq0 \} }
(S_{(v - w)-(\lambda^n_v -
\lambda^n_w) }-I ) S_{\lambda^n_v -
\lambda^n_w}
\\
&&{}+\mathbf{1}_{ \{ (\lambda^n_v - \lambda^n_w)-(v - w) \geq0 \} }
(I-S_{(\lambda^n_v -
\lambda^n_w)-(v - w)}) S_{v -w}.
\end{eqnarray*}
Thus, recalling the definition of the set $K' = \{ S_u B \varphi, u
\in
[0,T], \varphi\in K \}$, it follows that
%
%
\begin{eqnarray}\label{useimply}
&& \sup_{0 \leq w \leq v \leq T } \sup_{\varphi\in K} \bigl\llVert
(S_{v
- w} - S_{\lambda^n_v -
\lambda^n_w}) B \varphi\bigr\rrVert_q\nonumber
\\
&&\qquad \leq \sup_{0 \leq w \leq v \leq T } \sup_{\varphi\in K} \bigl\llVert
\mathbf{1}_{ \{ (v - w)-(\lambda^n_v - \lambda^n_w) \geq0 \} } (S_{(v -
w)-(\lambda^n_v -
\lambda^n_w) }-I ) S_{\lambda^n_v -
\lambda^n_w}B \varphi\bigr
\rrVert_q\nonumber
\\
&&\quad\qquad{}+ \sup_{0 \leq w \leq v \leq T } \sup_{\varphi\in K} \bigl\llVert
\mathbf{1}_{ \{ (\lambda^n_v - \lambda^n_w)-(v - w) \geq0 \} }
(I-S_{(\lambda^n_v -
\lambda^n_w)-(v - w)}) S_{v -
w}B \varphi\bigr
\rrVert_q
\\
&&\qquad \leq \sup_{0 \leq w \leq v \leq T } \sup_{\varphi\in K'} \bigl\llVert
\mathbf{1}_{ \{ (v - w)-(\lambda^n_v - \lambda^n_w) \geq0 \} } (S_{(v -
w)-(\lambda^n_v -
\lambda^n_w) }-I ) \varphi\bigr\rrVert
_q\nonumber
\\
&&\quad\qquad{}+ \sup_{0 \leq w \leq v \leq T } \sup_{\varphi\in K'} \bigl\llVert
\mathbf{1}_{ \{ (\lambda^n_v - \lambda^n_w)-(v - w) \geq0 \} }
(I-S_{(\lambda^n_v -
\lambda^n_w)-(v - w)}) \varphi\bigr\rrVert
_q.\hspace*{-15pt}\nonumber
\end{eqnarray}
However, note that since $\llVert \dot{\lambda}^n-1\rrVert _T
\rightarrow0$ as $n \rightarrow\infty$, it follows that
\begin{eqnarray*}
\sup_{0 \leq w \leq v \leq T}\bigl\llvert(v - w)-\bigl(\lambda^n_v
- \lambda^n_w\bigr) \bigr\rrvert&\rightarrow&0\qquad
\mbox{as }n \rightarrow\infty.
\end{eqnarray*}
Thus, since by (\ref{strongbounded2}) the set $K' \subset\mathcal
{S}$ is bounded, it follows by part~\hyperref[C01part2]{(2)} of Definition~\ref{defco1strong} that
\begin{eqnarray*}
&& \sup_{0 \leq w \leq v \leq T } \sup_{\varphi\in K'} \bigl\llVert
\mathbf{1}_{ \{ (v - w)-(\lambda^n_v - \lambda^n_w) \geq0 \} } (S_{(v -
w)-(\lambda^n_v -
\lambda^n_w) }-I ) \varphi\bigr\rrVert
_q
\\
&&\quad {}+ \sup_{0 \leq w \leq v \leq T } \sup_{\varphi\in K'} \bigl\llVert
\mathbf{1}_{ \{ (\lambda^n_v - \lambda^n_w)-(v - w) \geq0 \} }
(I-S_{(\lambda^n_v -
\lambda^n_w)-(v - w)}) \varphi\bigr\rrVert
_q
\\
&&\qquad \rightarrow 0\qquad\mbox{as }n \rightarrow\infty,
\end{eqnarray*}
which, by (\ref{eq2}) and (\ref{useimply}), implies (\ref{limit2}).

Finally, (\ref{limit3}) follows from the fact that
%
%
\begin{eqnarray}\label{eq1}
&&\sup_{t \in[0,T]} \sup_{\varphi\in K} \biggl\llvert\int
_0^t \bigl\langle\mu^n_s
- \mu_{\lambda^n_s}, S_{t-s} B \varphi\bigr\rangle \,ds\biggr\rrvert
\nonumber\\[-8pt]\\[-8pt]\nonumber
&&\qquad \leq T \sup_{s \in[0,T]}\sup_{u \in[0,T], \varphi\in K} \bigl\llvert
\bigl\langle\mu^n_s - \mu_{\lambda^n_s},
S_u B \varphi\bigr\rangle\bigr\rrvert\rightarrow0,
\end{eqnarray}
as $n \rightarrow\infty$, where the final convergence follows from
(\ref{Dconvergeregulator}) and (\ref{strongbounded2}). This completes
the proof.
\end{pf}

\subsection{Age process}\label{secageregulator}
Now let $B^\mathcal{A}$ be the linear operator defined on
$\mathcal{S}$ such that
%
%
\begin{equation}
B^\mathcal{A} \varphi= \varphi' - h\varphi\qquad\mbox{for
all }\varphi\in\mathcal{S}. \label{defBA}
\end{equation}
Our main result in this subsection is to verify that $B^{\mathcal{A}}$
generates a strongly-continuous $(C_0,1)$ semigroup. This will then be
useful in Sections~\ref{SubsecAgesFluid}~and~\ref{SubsecAgesDiffusion}, where we prove our fluid and diffusion limits,
respectively, for the age process. In particular, by Theorem~\ref{ThmRegulatorMap} of the preceding subsection and (\ref{AgesSystemEquation}) of Section~\ref{SubsecAgeEquations}, this will
then allow us to write
%
%
\begin{equation}
\label{ageregulatorrep} \mathcal{A} = \Psi_{B^\mathcal{A}}\bigl(
\mathcal{A}_0+
\mathcal{E} - \bigl(\mathcal{D}^0 + \mathcal{D}\bigr) \bigr),
\end{equation}
where the map $\Psi_{B^\mathcal{A}}\dvtx  \mathbb{D}([0,T],\mathcal{S}')
\mapsto\mathbb{D}([0,T],\mathcal{S}')$
is a continuous map. We begin with the following lemma. Its proof may
be found in the \hyperref[appen]{Appendix}.

%
\begin{lem}\label{hazardlemma}
(1)~For each $n \geq1$ and $t \geq0$,
%
%
\begin{equation}
\label{ccdfratiobound} \sup_{x \geq0} \biggl\llvert\biggl(
\frac{\bar{F}(x+t)}{\bar{F}(x)} \biggr)^{(n)} \biggr\rrvert< \infty.
\end{equation}

(2)~For each $T > 0$, there exists a sequence $(M_n)_{ n \geq0 }$
such that for each \mbox{$s,t \in[0,T]$},
%
%
\begin{equation}
\label{ccdfratiobounddifference} \sup_{x \geq0} \biggl\llvert\biggl(
\frac{\bar{F}(x+t)}{\bar{F}(x)} - \frac{\bar{F}(x+s)}{\bar{F}(x)} \biggr
)^{(n)} \biggr\rrvert\leq
M_n \llvert t-s\rrvert.
\end{equation}
\end{lem}

\begin{pf}
See the \hyperref[appen]{Appendix}.
\end{pf}

Next, we have the following.

%
\begin{lem}\label{shiftboundlemma}
For each $\varphi\in\mathcal{S}$, $t \in\mathbb R$ and $\alpha,
\beta\in\mathbb N$, we have
%
%
\begin{equation}
\label{shiftboundeq} \llVert\tau_t \varphi\rrVert_{\alpha, \beta} \leq
\bigl( 1 \vee\llvert t\rrvert^\alpha\bigr) 2^\alpha\max
_{0 \leq i \leq\alpha} \llVert\varphi\rrVert_{i,\beta}.
\end{equation}
\end{lem}

\begin{pf}
For each $\varphi\in\mathcal{S}$, $t \in\mathbb{R}$ and $\alpha,
\beta\in
\mathbb N$, we have
\begin{eqnarray*}
\llVert\tau_t \varphi\rrVert_{\alpha, \beta} = \bigl\llVert
\varphi( \cdot- t ) \bigr\rrVert_{\alpha, \beta} &=& \sup_{x \in
\mathbb R}
\bigl\llvert x^\alpha\varphi^{(\beta)} (x - t) \bigr\rrvert
\\
&=& \sup_{x \in\mathbb R} \bigl\llvert\bigl[t + ( x - t )
\bigr]^\alpha\varphi^{(\beta)} (x - t) \bigr\rrvert
\\
&=& \sup_{x \in\mathbb R} \Biggl\llvert\sum
_{i=0}^\alpha\pmatrix{\alpha
\cr
i} t^{\alpha- i}
(x - t)^i \varphi^{(\beta)}(x - t) \Biggr\rrvert
\\
& \leq&\sum_{i=0}^\alpha\pmatrix{\alpha
\cr
i} \llvert t\rrvert^{\alpha- i} \llVert\varphi\rrVert_{i,\beta} \leq
\bigl( 1 \vee\llvert t\rrvert^\alpha\bigr) 2^\alpha\max
_{0 \leq i \leq\alpha} \llVert\varphi\rrVert_{i,\beta}.
\end{eqnarray*}
This completes the proof.
\end{pf}

The following is now our main result of this subsection.

%
\begin{prop}\label{PropAgeC01}
The linear operator $B^{\mathcal{A}}$ defined by (\ref{defBA})
generates a strongly-continuous $(C_0,1)$ semigroup $(
S^{\mathcal{A}}_t )_{ t \geq0 }$ given by
%
%
\begin{equation}
\label{semigroupdef} S^{\mathcal{A}}_t \varphi= (1/\bar{F})
\tau_{-t} (\bar{F} \varphi)\qquad\mbox{for all } \varphi\in\mathcal{S}.
\end{equation}
%
\end{prop}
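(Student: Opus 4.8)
The plan is to verify directly the three defining properties of a $(C_0,1)$ semi-group for the family $S^{\mathcal{A}}_t \varphi = \bar{F}^{-1} \tau_{-t}(\bar{F}\varphi)$, and separately to confirm that its infinitesimal generator is indeed $B^{\mathcal{A}}\varphi = \varphi' - h\varphi$. Writing things out explicitly, $(S^{\mathcal{A}}_t\varphi)(y) = \bar{F}(y)^{-1}\bar{F}(y+t)\varphi(y+t)$, so the first task is to check that this lands back in $\Phi_{\mathcal{A}}$: since $h = f/\bar{F} \in C^\infty_b$, the ratio $\bar{F}(y+t)/\bar{F}(y) = \exp(-\int_y^{y+t} h(u)\,du)$ is a smooth, bounded function of $y$ with all derivatives bounded (each derivative is a polynomial in $h$ and its derivatives times the exponential), and one shows the seminorms $\|S^{\mathcal{A}}_t\varphi\|_m$ are controlled by $\|\varphi\|_m$ up to a constant depending on $t$ and on the bounds for $h$; this simultaneously gives property 3 of the definition (with $p = q$, appropriate $M_q$, and $\sigma_q$ coming from $\|h\|_\infty$ and its derivatives).

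Next I would verify the semigroup/algebraic properties: $S^{\mathcal{A}}_0 = I$ is immediate since $\tau_0$ is the identity, and the composition law $S^{\mathcal{A}}_s S^{\mathcal{A}}_t = S^{\mathcal{A}}_{s+t}$ follows from the cocycle identity $\bar{F}^{-1}\tau_{-s}(\bar F \cdot \bar F^{-1}\tau_{-t}(\bar F\varphi)) = \bar F^{-1}\tau_{-(s+t)}(\bar F\varphi)$, using that $\tau_{-s}$ is multiplicative and $\tau_{-s}\tau_{-t} = \tau_{-(s+t)}$. Property 2, continuity of $t \mapsto S^{\mathcal{A}}_t\varphi$ in $\Phi_{\mathcal{A}}$, amounts to showing that for fixed $\varphi$ the map $t \mapsto (y \mapsto \bar F(y)^{-1}\bar F(y+t)\varphi(y+t))$ is continuous into each Hilbert space $\Phi^m_{\mathcal{A}}$; this is a standard dominated-convergence argument once one differentiates and bounds, using smoothness of $\bar F$, $h$, and $\varphi$ together with the explicit exponential form of $\bar F(y+t)/\bar F(y)$, and it can be reduced to strong continuity of the shift group on $L^2$ of the relevant weighted space.

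Finally, to identify the generator I would compute $\frac{d}{dt}\big|_{t=0}(S^{\mathcal{A}}_t\varphi)(y)$. Differentiating $\bar F(y)^{-1}\bar F(y+t)\varphi(y+t)$ in $t$ at $t=0$ gives $\bar F(y)^{-1}(\bar F'(y)\varphi(y) + \bar F(y)\varphi'(y)) = \varphi'(y) - h(y)\varphi(y)$, since $\bar F' = -f = -h\bar F$; so pointwise the derivative is exactly $B^{\mathcal{A}}\varphi$. To upgrade this to convergence in the topology of $\Phi_{\mathcal{A}}$ (i.e. in every seminorm $\|\cdot\|_m$) one writes $t^{-1}(S^{\mathcal{A}}_t\varphi - \varphi) - B^{\mathcal{A}}\varphi$ as an integral remainder $t^{-1}\int_0^t (S^{\mathcal{A}}_s B^{\mathcal{A}}\varphi - B^{\mathcal{A}}\varphi)\,ds$ (using the forward equation, which itself follows from the group law and smoothness) and invokes the continuity established in property 2 together with the uniform seminorm bound from property 3 to pass the limit through; this shows $B^{\mathcal{A}}$ as defined is the infinitesimal generator.

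The main obstacle I expect is the bookkeeping in property 3 — namely showing that differentiating the weight $\bar F(y+t)/\bar F(y) = \exp(-\int_y^{y+t}h)$ an arbitrary number of times produces only terms bounded uniformly in $y$ (in $L^2(\mu)$, where $\mu$ has density $\bar F$), with the growth in $t$ captured by a single exponential rate $\sigma_q$. This is where the hypothesis $h \in C^\infty_b$ does all the work: every $y$-derivative of the exponential factor is that same exponential times a universal polynomial in $h, h', h'', \dots$ evaluated at $y$ and $y+t$, all of which are bounded, and the Leibniz expansion of $\|S^{\mathcal{A}}_t\varphi\|_m$ then telescopes into $M_m e^{\sigma_m t}\|\varphi\|_m$. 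Once that estimate is in hand, properties 1 and 2 and the generator computation are comparatively routine.
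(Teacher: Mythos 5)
Your proposal is correct and follows essentially the same route as the paper: verify the three $(C_0,1)$ properties for $S^{\mathcal{A}}_t \varphi = \bar{F}^{-1}\tau_{-t}\paren{\bar{F}\varphi}$ via a Leibniz expansion exploiting $h \in C^\infty_b$ together with weighted-$L^2$ estimates, and identify the generator by differentiating $\bar{F}^{-1}\bar{F}(\cdot+t)\varphi(\cdot+t)$ at $t=0$, using $\bar{F}' = -h\bar{F}$. The only differences are minor: the paper cites Lemma 2 of \cite{DM08b} for strong continuity and, by retaining one factor of $\bar{F}(\cdot+t)/\bar{F} \leq 1$ to convert the weight $\bar{F}(x)\,dx$ into $\bar{F}(x+t)\,dx$, gets the Part 3 bound with $\sigma_q = 0$ and $p=q$, whereas you settle for an admissible $e^{\sigma_q t}$ growth (which also follows from $\|h\|_\infty < \infty$); your Fa\`a di Bruno-style accounting of the derivatives of $\bar{F}(\cdot+t)/\bar{F} = \exp\paren{-\int_\cdot^{\cdot+t} h}$ and your upgrade of the generator computation to seminorm convergence are, if anything, more careful than the paper's.
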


\begin{pf}
We first check that $B^{\mathcal{A}}$ is indeed the infinitesimal generator
of the semi-group given by (\ref{semigroupdef}). In order to do so, it
suffices to check that for each $\alpha,\beta\in\mathbb{N}$,
%
%
\begin{eqnarray}
\lim_{t \rightarrow0}\biggl\llVert\frac{S^{\mathcal{A}}_t
\varphi- \varphi}{t}-
\bigl(\varphi' - h \varphi\bigr) \biggr\rrVert
_{\alpha,\beta}&=& 0. \label{checkzeroalpha}
\end{eqnarray}
We begin by noting that for each $t \geq0, x \in\mathbb{R}$ and
$\varphi\in\mathcal{S}$, we have that
\begin{eqnarray*}
\bigl\llvert S^{\mathcal{A}}_t \varphi(x) \bigr\rrvert&=& \biggl
\llvert\frac
{1-F(x-t)}{1-F(x)} \cdot\tau_t \varphi(x) \biggr\rrvert\leq e
^{\llVert h\rrVert _{\infty}t} \cdot\bigl\llvert\tau_t \varphi
(x)\bigr\rrvert,
\end{eqnarray*}
and so it follows that $S^{\mathcal{A}}_t \varphi\in\mathcal{S}$.
Now let $x \in\mathbb{R}$ be fixed and note that for each $t \geq0$,
we may write
\begin{eqnarray*}
S^{\mathcal{A}}_t \varphi(x) &=& \exp\biggl(-\int
_x^{x+t}h(v)\,dv \biggr)\varphi(x+t).
\end{eqnarray*}
Hence, by Taylor's theorem, expanding in terms of $t$ we obtain that we
may write
%
%
\begin{eqnarray}
S^{\mathcal{A}}_t \varphi(x) &=& \varphi(x) + \bigl(\varphi
'(x)-h(x)\varphi(x)\bigr) t + R(x,t), \label{Taylorshow}
\end{eqnarray}
where the remainder term $R(x,t)$ has the from
\begin{eqnarray*}
R(x,t) &=&\frac{1}{2}\int_0^{t}
\frac{d^2}{du^2} \biggl(\exp\biggl(-\int_x^{x+u}h(v)\,dv
\biggr)\varphi(x+u) \biggr)u \,du.
\end{eqnarray*}
Now differentiating with respect to $x$ in (\ref{Taylorshow}), we
obtain that for each $\alpha,\beta\in\mathbb{N}$,
\begin{eqnarray*}
\lim_{t \rightarrow0}\biggl\llVert\frac{S^{\mathcal{A}}_t
\varphi- \varphi}{t}-
\bigl(\varphi' - h \varphi\bigr) \biggr\rrVert
_{\alpha,\beta}&=& \lim_{t \rightarrow0} \sup_{x \in\mathbb{R}}
\biggl\llvert x^{\alpha} \frac
{R^{(\beta)}(x,t)}{t} \biggr\rrvert,
\end{eqnarray*}
where $R^{(\beta)}(x,t)$ denotes the $\beta$th derivative of $R(x,t)$
with respect to $x$.
Hence, in order to verify (\ref{checkzeroalpha}) it now suffices to
show that for each $\alpha,\beta\in\mathbb{N}$,
\begin{eqnarray*}
\lim_{t \rightarrow0} \sup_{x \in\mathbb{R}}\biggl\llvert
x^{\alpha} \frac{R^{(\beta)}(x,t)}{t} \biggr\rrvert&=&0.
\end{eqnarray*}
First note that for each $x \in\mathbb{R}$ fixed, we may write
%
%
\begin{eqnarray}\label{showremainu}
\qquad &&x^{\alpha}R^{(\beta)}(x,t)
\nonumber\\[-8pt]\\[-8pt]\nonumber
&&\qquad =\frac{1}{2}\int_0^{t} x^{\alpha}
\biggl(\frac{d^2}{du^2} \biggl(\exp\biggl(-\int_x^{x+u}h(v)\,dv
\biggr)\varphi(x+u) \biggr) \biggr)^{(\beta)}u \,du.
\end{eqnarray}
However, since $h \in C_b(\mathbb{R}_{+})$ and $\varphi\in\mathcal
{S}$, it is straightforward to show that for each $\alpha,\beta\in
\mathbb{N}$, there exists a constant $C_{\alpha,\beta} < \infty$
such that for $u$ sufficiently small,
\[
\sup_{ x \in\mathbb{R}} \biggl\llvert x^{\alpha} \biggl(
\frac
{d^2}{du^2} \biggl(\exp\biggl(-\int_x^{x+u}h(v)\,dv
\biggr)\varphi(x+u) \biggr) \biggr)^{(\beta)} \biggr\rrvert
< C_{\alpha,\beta}.
\]
Hence, by (\ref{showremainu}) we obtain that
\begin{eqnarray*}
\lim_{t \rightarrow0} \sup_{x \in\mathbb{R}}\biggl\llvert
x^{\alpha} \frac{R^{(\beta)}(x,t)}{t} \biggr\rrvert&<&\lim_{t
\rightarrow0}
\biggl\llvert\frac{C_{\alpha,\beta}}{2t} \int_0^t u \,du
\biggr\rrvert = \lim_{t \rightarrow0} \frac{C_{\alpha,\beta
}}{4}t = 0.
\end{eqnarray*}
This now completes the verification of the fact that $B^{\mathcal{A}}$
is the infinitesimal generator
of the semigroup $( S^{\mathcal{A}}_t )_{t \geq0}$ given by (\ref
{semigroupdef}).

We next verify that the semigroup $( S^{\mathcal{A}}_t )_{t \geq0}$
is a strongly-continuous $(C_0,1)$ semi-group. It is straightforward to
see that part~\hyperref[C01part1]{(1)} of Definition~\ref{C01def} is satisfied.
We next check that part~\hyperref[C01part2]{(2)} of Definition~\ref{C01def} is
satisfied.
Consider $ 0 \leq s < t$, a~bounded set $K \subset\mathcal{S}$, and
$\alpha, \beta\in\mathbb N$. We then have that we may write
%
%
\begin{eqnarray}\label{longstuff}
&&\sup_{\varphi\in K} \bigl\llVert S_{s}^{\mathcal{A}}
\varphi- S_{t}^{\mathcal{A}} \varphi\bigr\rrVert_{\alpha, \beta}
\nonumber
\\
&&\qquad = \sup_{\varphi\in K} \bigl\llVert(1/\bar{F}) \tau_{-s}
( \bar{F} \varphi) - (1/\bar{F})\tau_{-t} ( \bar{F} \varphi) \bigr
\rrVert_{\alpha, \beta}
\nonumber
\\
&&\qquad = \sup_{\varphi\in K} \sup_{x \in\mathbb R} \biggl\llvert
x^\alpha\biggl(\frac{ \bar{F}(x + s) }{\bar{F}(x)} \varphi(x + s) -
\frac
{\bar{F}(x + t)}{\bar{F}(x)}
\varphi(x + t) \biggr)^{(\beta)} \biggr\rrvert
\\
&&\qquad \leq \sup_{\varphi\in K} \sup_{x \in\mathbb R} \biggl\llvert
x^\alpha\biggl( \biggl( \biggl(\frac{ \bar{F}(x + s) }{\bar{F}(x)}-
\frac
{\bar{F}(x+t)}{\bar{F}(x)}
\biggr)\varphi(x + s) \biggr)^{(\beta
)} \biggr) \biggr\rrvert
\nonumber
\\
&&\quad\qquad{} + \sup_{\varphi\in K} \sup_{x \in\mathbb R} \biggl\llvert
x^\alpha\biggl(\frac{\bar{F}(x + t)}{\bar{F}(x)} \bigl(\varphi(x+s) -
\varphi(x + t)
\bigr) \biggr)^{(\beta)} \biggr\rrvert. \nonumber
\end{eqnarray}
We now handle each of the terms in (\ref{longstuff}) separately.

For the first term in (\ref{longstuff}), note that by Lemmas~\ref
{hazardlemma} and~\ref{shiftboundlemma} we have that
%
%
\begin{eqnarray}\label{longstufffirstterm}
\qquad &&\sup_{\varphi\in K} \sup_{x \in\mathbb R} \biggl\llvert
x^\alpha\biggl[ \biggl( \biggl(\frac{ \bar{F}(x + s) }{\bar{F}(x)}-
\frac{\bar
{F}(x+t)}{\bar{F}(x)}
\biggr)\varphi(x + s) \biggr)^{(\beta)} \biggr] \biggr\rrvert
\nonumber
\\
&&\qquad = \sup_{\varphi\in K} \sup_{x \in\mathbb R} \sum
_{i=0}^\beta\pmatrix{\beta
\cr
i} \biggl\llvert
x^\alpha\biggl(\frac{ \bar{F}(x + s)
}{\bar{F}(x)} - \frac{\bar{F}(x+t)}{\bar{F}(x)}
\biggr)^{(\beta
-i)} \varphi^{(i)}(x+s) \biggr\rrvert
\nonumber\\[-8pt]\\[-8pt]\nonumber
&&\qquad =\llvert t-s\rrvert\sum_{i=0}^\beta
M_{\beta-i} \pmatrix{\beta
\cr
i} \sup_{\varphi\in K} \sup
_{x \in\mathbb R} \bigl\llvert x^\alpha\varphi^{(i)}(x+s)
\bigr\rrvert
\nonumber
\\
&&\qquad \leq\llvert t-s\rrvert\bigl( 1 \vee\llvert t\rrvert^\alpha\bigr)
2^\alpha\sum_{i=0}^\beta
M_{\beta-i} \pmatrix{\beta
\cr
i} \sup_{\varphi\in K} \max
_{0 \leq j \leq\alpha} \llVert\varphi\rrVert_{j, i}. \nonumber
\end{eqnarray}

We next focus on the second term in (\ref{longstuff}). For each $n
\geq1$, denote the left-hand side of (\ref{ccdfratiobound}) by
$L_n$. By the mean value theorem, for each $x \in\mathbb R$ there exists
an $r_x \in[s,t]$ such that
%
%
\begin{eqnarray}\label{longstuffsecondterm}
\qquad&&\sup_{\varphi\in K} \sup_{x \in\mathbb R} \biggl\llvert
x^\alpha\biggl(\frac{\bar{F}(x + t)}{\bar{F}(x)} \bigl(\varphi(x+s) -
\varphi(x + t)
\bigr) \biggr)^{(\beta)} \biggr\rrvert
\nonumber
\\
&&\qquad = \sup_{\varphi\in K} \sup_{x \in\mathbb R} \sum
_{i=0}^\beta\pmatrix{\beta
\cr
i} \biggl\llvert
x^\alpha\frac{\bar{F}(x + t)}{\bar
{F}(x)}^{(\beta-i)} \bigl(\varphi^{(i)}(x+s)
- \varphi^{(i)}(x + t) \bigr) \biggr\rrvert
\nonumber
\\
&&\qquad = \sup_{\varphi\in K} \sup_{x \in\mathbb R} \sum
_{i=0}^\beta\pmatrix{\beta
\cr
i} L_{\beta-i}
\bigl\llvert x^\alpha\bigl(\varphi^{(i)}(x+s) -
\varphi^{(i)}(x + t) \bigr) \bigr\rrvert
\\
&&\qquad \leq \llvert t - s \rrvert\sup_{\varphi\in K} \sup
_{x \in
\mathbb R} \sum_{i=0}^\beta
\pmatrix{\beta
\cr
i} L_{\beta-i} \bigl\llvert x^\alpha
\varphi^{(i+1)}(x+r_x) \bigr\rrvert
\nonumber
\\
&&\qquad \leq \llvert t - s \rrvert\bigl( 1 \vee\llvert t\rrvert^\alpha
\bigr) 2^\alpha\sum_{i=0}^\beta
\pmatrix{\beta
\cr
i} L_{\beta-i} \sup_{\varphi\in K} \max
_{0 \leq j \leq\alpha} \llVert\varphi\rrVert_{j,i+1},\nonumber
\end{eqnarray}
where the final inequality follows as a consequence of Lemma~\ref
{shiftboundlemma}.
Combining (\ref{longstufffirstterm}) and (\ref{longstuffsecondterm})
with (\ref{longstuff}) and taking the limit as $s \rightarrow t$
now yields part~\hyperref[C01part2]{(2)} of Definition~\ref{C01def}.

We now complete the proof by verifying that part~\hyperref[C01part3]{(3)} of
Definition~\ref{C01def} is satisfied. First note that for
each $\varphi\in\mathcal{S}$, $t \geq0$ and $\alpha, \beta
\in\mathbb N$, we may write
\begin{eqnarray*}
\bigl\llVert S_t^{\mathcal{A}} \varphi\bigr\rrVert_{\alpha, \beta}
&=& \sup_{x \in\mathbb R} \biggl\llvert x^\alpha\biggl(
\frac{ \bar{F}(x+t)
}{\bar{F}(x)} \varphi( x + t ) \biggr)^{(\beta)} \biggr\rrvert
\\
&=& \sup_{x \in\mathbb R} \Biggl\llvert x^\alpha\sum
_{i=0}^\beta\pmatrix{\beta
\cr
i} \biggl(
\frac{\bar{F}(x+t)}{\bar{F}(x)} \biggr)^{(\beta
-i)} \varphi^{(i)}(x+t) \Biggr\rrvert
\\
&\leq&\sum_{i=0}^\beta\pmatrix{\beta
\cr
i}
L_{\beta-i} \sup_{x
\in\mathbb R} \bigl\llvert x^\alpha
\varphi^{(i)}(x+t) \bigr\rrvert
\\
&\leq&\bigl( 1 \vee\llvert t\rrvert^\alpha\bigr) 2^\alpha\sum
_{i=0}^\beta\pmatrix{\beta
\cr
i}
L_{\beta- i } \max_{0 \leq j \leq\alpha} \llVert\varphi\rrVert
_{j, i }
\\
&\leq&\bigl( 1 \vee\llvert t\rrvert^\alpha\bigr) 2^{\alpha+ \beta} \max
_{0
\leq i \leq\beta} L_i \max_{0 \leq j \leq\alpha} \max
_{0 \leq i
\leq
\beta} \llVert\varphi\rrVert_{j, i },
\end{eqnarray*}
where the final inequality above follows from Lemma~\ref{shiftboundlemma}.
Part~\hyperref[C01part3]{(3)} of Definition~\ref{C01def} now follows from (\ref
{lem133}) and (\ref{lem134}) of Section~\ref{SubsecNotation}
and the fact that $\llVert \cdot\rrVert _p \leq\llVert \cdot
\rrVert _{p+1}$
for each $p \in\mathbb{N}$. This completes the proof.
\end{pf}

\subsection{Residual service time process}\label{secresidualsregulator}

Now let $B^{\mathcal{R}}$ be the linear operator defined on $\mathcal
{S}$ such that
%
%
\begin{equation}
B^{\mathcal{R}}\varphi= - \varphi'\qquad\mbox{for all } \varphi
\in\mathcal{S}. \label{deflinresdefine}
\end{equation}
In this subsection, we verify that $B^{\mathcal{R}}$ generates a
strongly-continuous
$(C_0,1)$ semi-group. This will be useful in Sections~\ref{SubsecResidualsFluid}~and~\ref{SubsecResidualsDiffusion},
where we prove our fluid and diffusion limits, respectively, for the
residual service time process. In particular, by (\ref
{ResidualsSystemEquation}) of Section~\ref{SubsecResidualEquations}
and Theorem~\ref{ThmRegulatorMap}, this then implies that we may write
%
%
\begin{equation}
\label{residualsregulatorrep} \mathcal{R} = \Psi_{B^{\mathcal{R}}}(
\mathcal{R}_0+ E
\mathcal{F}+\mathcal{G} ),
\end{equation}
where the map $\Psi_{B^\mathcal{R}}\dvtx  \mathbb{D}([0,T],\mathcal{S}')
\mapsto\mathbb{D}([0,T],\mathcal{S}')$
is a continuous map.

Our main result of this subsection is the following.

%
\begin{prop}\label{PropResidualC01}
The linear operator $B^{\mathcal{R}}$ defined by (\ref
{deflinresdefine}) generates the strongly-continuous $(C_0,1)$
semigroup $( \tau_t )_{t \geq0}$.
\end{prop}

\begin{pf}First note that it is clear by Lemma~\ref{shiftboundlemma}
that for each $t \geq0$ and $\varphi\in\mathcal{S}$, we have that
$\tau_t \varphi\in\mathcal{S}$. We next check that for
each $\alpha, \beta\in\mathbb{N}$, we have the convergence
%
%
\begin{eqnarray}
\lim_{t \rightarrow0}\biggl\llVert\frac{\tau_t \varphi-
\varphi
}{t} -
\bigl(-\varphi'\bigr) \biggr\rrVert_{\alpha,\beta}
&=&0. \label{sufficeeasygenerator}
\end{eqnarray}
This will then be sufficient to verify that $B^{\mathcal{R}}$ as
defined by (\ref{deflinresdefine}) generates the semigroup $( \tau
_t )_{t \geq0}$. Let $x \in\mathbb{R}$ and $\varphi\in\mathcal
{S}$ be fixed. It then follows
by Taylor's theorem, expanding in terms of $t$, that we may write
%
%
\begin{eqnarray}
\tau_t \varphi(x) &=& \varphi(x) - \varphi'(x)t +
R(x,t), \label{tautexpansion}
\end{eqnarray}
where the remainder term $R(x,t)$ is given by
%
%
\begin{eqnarray}
R(x,t) &=& \frac{1}{2} \int_0^{-t}
\varphi''(x+u) (-t-u) \,du. \label{formremainder}
\end{eqnarray}
Now differentiating in (\ref{tautexpansion}) with respect to $x$, we
obtain that
\begin{eqnarray*}
\lim_{t \rightarrow0}\biggl\llVert\frac{\tau_t \varphi-
\varphi
}{t} -
\bigl(-\varphi'\bigr) \biggr\rrVert_{\alpha,\beta}
&=& \lim_{t
\rightarrow0} \sup_{x \in\mathbb{R}}\biggl\llvert
x^{\alpha}\frac
{R^{(\beta)}(t,x)}{t} \biggr\rrvert,
\end{eqnarray*}
where $R^{(\beta)}(x,t)$ denotes the $\beta$th derivative of $R$ with
respect to $x$. Thus, in order to prove (\ref
{sufficeeasygenerator}), it now suffices to check that
%
%
\begin{eqnarray}
\lim_{t \rightarrow0} \sup_{x \in\mathbb{R}}\biggl\llvert
x^{\alpha
}\frac{R^{(\beta)}(t,x)}{t} \biggr\rrvert&=& 0. \label{limcheck}
\end{eqnarray}
First note that by (\ref{formremainder}) we may write
\begin{eqnarray*}
x^{\alpha}R^{(\beta)}(x,t) &=& \frac{x^{\alpha}}{2} \int
_0^{-t} \varphi^{(\beta+2)}(x+u) (-t-u) \,du
\\
&=&\frac{1}{2} \int_0^{-t}
x^{\alpha} \varphi^{(\beta
+2)}(x+u) (-t-u) \,du.
\end{eqnarray*}
However, by Lemma~\ref{shiftboundlemma} there exists a constant
$C_{\alpha,\beta} < \infty$ such that for sufficiently small $u$,
\begin{eqnarray*}
\sup_{x \in\mathbb{R}} \bigl\llvert x^{\alpha}
\varphi^{(\beta
+2)}(x+u)\bigr\rrvert&< &C_{\alpha,\beta}.
\end{eqnarray*}
We therefore obtain that
\begin{eqnarray*}
\lim_{t \rightarrow0} \sup_{x \in\mathbb{R}}\biggl\llvert
x^{\alpha
}\frac{R^{(\beta)}(t,x)}{t} \biggr\rrvert& < & \lim_{t \rightarrow0}
\biggl\llvert\frac{C_{\alpha,\beta}}{2t} \int_0^{-t}
(-t-u)\,du \biggr\rrvert = \lim_{t \rightarrow0} \frac{C_{\alpha,\beta
}}{4}t = 0,
\end{eqnarray*}
thus proving (\ref{limcheck}). This completes our verification of the
fact that $B^{\mathcal{R}}$ as defined by (\ref{deflinresdefine})
generates the semigroup $( \tau_t )_{t \geq0}$.

We next proceed to verify that $( \tau_t )_{ t \geq0 }$ is a
strongly-continuous $(C_0,1)$ semigroup.
In order to check this fact, we will verify that $( \tau_t )_{ t \geq
0 }$ satisfies parts~\hyperref[C01part1]{(1)} through~\hyperref[C01part3]{(3)} of
Definition~\ref{C01def}. It is clear that $( \tau_t )_{ t \geq0 }$
satisfies part~\hyperref[C01part1]{(1)} of Definition~\ref{C01def}. We next
check that $( \tau_t )_{ t \geq0 }$ satisfies part~\hyperref[C01part2]{(2)}
of Definition~\ref{C01def}. Let $s < t$, $K \subset\mathcal{S}$ be a
bounded set, and let $\alpha, \beta\in\mathbb N$. Then, by the mean
value theorem, for each $x \in\mathbb R$ there exists an $r_x \in
[x-t,x-s]$ such that\looseness=-1
\begin{eqnarray*}
&& \sup_{\varphi\in K} \llVert\tau_s \varphi-
\tau_t \varphi\rrVert_{\alpha, \beta}
\nonumber
\\
&&\qquad = \sup_{\varphi\in K} \sup_{x \in\mathbb R} \bigl\llvert
x^\alpha\bigl( \varphi^{(\beta)}(x - s) - \varphi^{(\beta)}(x
- t) \bigr) \bigr\rrvert
\\
&&\qquad = \sup_{\varphi\in K} \sup_{x \in\mathbb R} \bigl\llvert
\bigl( r_x + ( x - r_x ) \bigr)^\alpha\bigl(
\varphi^{(\beta)}(x - s) - \varphi^{(\beta)}(x - t) \bigr) \bigr\rrvert
\\
&&\qquad = \sup_{\varphi\in K} \sup_{x \in\mathbb R} \Biggl\llvert
\sum_{i=0}^\alpha\pmatrix{\alpha
\cr
i}
r_x^{\alpha- i} (x - r_x)^i \bigl(
\varphi^{(\beta)}(x - s) - \varphi^{(\beta)}(x - t) \bigr) \Biggr\rrvert
\\
&&\qquad = \llvert t - s \rrvert\sup_{\varphi\in K} \sup
_{x \in\mathbb
R} \Biggl\llvert\sum_{i=0}^\alpha
\pmatrix{\alpha
\cr
i} r_x^{\alpha- i} (x - r_x)^i
\varphi^{(\beta+1)}(x - r_x) \Biggr\rrvert
\\
&&\qquad \leq \llvert t - s \rrvert\sum_{i=0}^\alpha
\pmatrix{\alpha
\cr
i} (t+1)^{\alpha- i} \sup_{\varphi\in K} \llVert
\varphi\rrVert_{i,\beta+1}.
\end{eqnarray*}
Part~\hyperref[C01part2]{(2)} of Definition~\ref{C01def} now follows from the
bound (\ref{lem133}) and the fact that $K$ is a bounded set.

We now complete the proof by verifying that $(\tau_t)_{t \geq0}$
satisfies part~\hyperref[C01part3]{(3)} of Definition~\ref{C01def}. Note that
for each $\varphi\in\mathcal{S}$, $t \geq0$ and $\alpha, \beta\in
\mathbb N$, we have that
\begin{eqnarray*}
\llVert\tau_t \varphi\rrVert_{\alpha, \beta} &=& \bigl\llVert
\varphi( \cdot- t ) \bigr\rrVert_{\alpha, \beta}
\\
&=& \sup_{x \in\mathbb R} \bigl\llvert x^\alpha
\varphi^{(\beta)} (x - t) \bigr\rrvert
\\
&=& \sup_{x \in\mathbb R} \bigl\llvert\bigl[t + ( x - t )
\bigr]^\alpha\varphi^{(\beta)} (x - t) \bigr\rrvert
\\
&=& \sup_{x \in\mathbb R} \Biggl\llvert\sum
_{i=0}^\alpha\pmatrix{\alpha
\cr
i} t^{\alpha- i}
(x - t)^i \varphi^{(\beta)}(x - t) \Biggr\rrvert
\\
&\leq&\sum_{i=0}^\alpha\pmatrix{\alpha
\cr
i}
t^{\alpha- i} \llVert\varphi\rrVert_{i,\beta}
\\
&\leq&\bigl( 1 \wedge t^\alpha\bigr) 2^\alpha\max
_{0 \leq i \leq\alpha} \llVert\varphi\rrVert_{i,\beta}.
\end{eqnarray*}
Part~\hyperref[C01part3]{(3)} of Definition~\ref{C01def} now follows as a
result of the bounds (\ref{lem133}) and (\ref{lem134}).
\end{pf}

\section{Martingale results}\label{SecMartingale}

In this section, we show that the process $\mathcal{D}^0+\mathcal{D}$ defined
in Section~\ref{SubsecAgeEquations} and the process
$\mathcal{G}$ defined in Section~\ref{SubsecResidualEquations} are
both $\mathcal{S}'$-valued martingales. The fact that
$\mathcal{D}^0+\mathcal{D}$ is an $\mathcal{S}'$-valued martingale
will ultimately be used together with the
martingale functional central limit theorem \cite{EK86} and the
continuous mapping
theorem \cite{Billingsley99} in Sections~\ref{SubsecAgesFluid}~and~\ref{SubsecAgesDiffusion}
in order to prove our fluid and diffusion limits, respectively, for the
age process.
The fact that $\mathcal{G}$ is an \mbox{$\mathcal{S}'$-}valued martingale is
not necessarily needed in
order to prove limit theorems for the residual service time process but
may be used to show that the residual service time process is in fact a
Markov process.
We begin by studying $\mathcal{D}^0+\mathcal{D}$.

\subsection{Age process}

In this subsection, we show that the process $\mathcal{D}^0 + \mathcal{D}$
defined in Section~\ref{SubsecAgeEquations} is an $\mathcal{S}'$-valued
martingale with respect to
the filtration $(\mathcal{F}^{\mathcal{A}}_t)_{t \geq0}$ defined by
\begin{eqnarray*}
\mathcal{F}^{\mathcal{A}}_t &=& \sigma\bigl\{ \mathbf{1}_{ \{ \tilde
{\eta}_i \leq s - \tilde{\tau}_i \} },
s \leq t, i = 1,2,\ldots,A_0(\infty)\bigr\}
\\
&&{} \vee\sigma\bigl\{\tilde{
\tau}_i, i=1,\ldots,A_0(\infty)\bigr\}
\\
&&{}\vee\sigma\{ \mathbf{1}_{ \{ \eta_i \leq s - \tau_i \} }, s \leq t, i
= 1,2,\ldots,E_t\}
\\
&&{} \vee\sigma\{E_s, s \leq t\} \vee
\mathcal{N}.
\end{eqnarray*}
Moreover, we explicitly identify the tensor quadratic
variation of $\mathcal{D}^{0}+\mathcal{D}$. The following is our main
result of this subsection.

\begin{prop}\label{PropMartingales}
The process $\mathcal{D}^0 + \mathcal{D}$ is an
$\mathcal{S}'$-valued
$\mathcal{F}^{\mathcal{A}}_t$-martingale with tensor quadratic
variation process given for all $\varphi, \psi\in
\mathcal{S}$ by
%
%
\begin{eqnarray}\label{DD0QV}
\lll \mathcal{D}^0 + \mathcal{D} \rrr _t(
\varphi, \psi) &=& \sum_{i=1}^{A_0(\infty)} \int
_0^{ \tilde{\eta}_i \wedge t } \varphi( x - \tilde{\tau}_i )
\psi( x - \tilde{\tau}_i ) h_{\tilde{\tau
}_i}(x) \,dx
\nonumber\\[-8pt]\\[-8pt]\nonumber
&&{} + \sum_{i=1}^{E_t} \int
_0^{ \eta_i \wedge( t - \tau_i )^+ } \varphi(x) \psi(x) h(x) \,dx.
\end{eqnarray}
\end{prop}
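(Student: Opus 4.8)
The plan is to verify the martingale property coordinate-by-coordinate, using Mitoma's characterization implicitly (i.e. working with fixed $\varphi \in \Phi_{\mathcal{A}}$), and then to compute the tensor quadratic variation via the analogous coordinate computation. First I would fix $\varphi \in \Phi_{\mathcal{A}}$ and look at $\langle \mathcal{D}^0_t + \mathcal{D}_t, \varphi\rangle$. From the definitions (\ref{D0def}) and (\ref{Ddef}) and the association (\ref{DPhiAssociation}), this equals
\begin{equation*}
  \sum_{i=1}^{A_0}\paren{ \varphi(\tilde\eta_i - \tilde\tau_i)\ind{\tilde\eta_i \le t} - \int_0^{\tilde\eta_i \wedge t} \varphi(u - \tilde\tau_i) h_{\tilde\tau_i}(u)\,du } + \sum_{i=1}^{E_t}\paren{ \varphi(\eta_i)\ind{\eta_i \le t - \tau_i} - \int_0^{\eta_i \wedge (t-\tau_i)^+} \varphi(u) h(u)\,du }
\end{equation*}
(using that $\tilde\eta_i \le y + \tilde\tau_i$ etc.\ gets absorbed when integrating against $\varphi$, as in the derivation of (\ref{AgesPhiForm})). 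The key observation is that each summand is a single-jump compensated process: for a customer with arrival time $\tau$ and service time $\eta$ having hazard rate $h$, the process $t \mapsto \varphi(\eta)\ind{\eta \le t - \tau} - \int_0^{\eta\wedge(t-\tau)^+}\varphi(u)h(u)\,du$ is, conditionally on $\tau$, the compensated version of the point process that jumps by $\varphi(\eta)$ at the departure epoch $\tau + \eta$; since $\eta$ has hazard rate $h$, its $\mathcal F^{\mathcal A}_t$-compensator is precisely $\int_0^{\eta\wedge(t-\tau)^+} \varphi(u)h(u)\,du$. The same holds for initial customers with the conditional hazard rate $h_{\tilde\tau_i}$ from (\ref{initialservicetimes}). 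This is a standard fact about the Doob–Meyer decomposition of a single jump with a given hazard rate; I would cite it and spell out the conditional-expectation check briefly.

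Next I would assemble these into a martingale. For each fixed $t$ the sum $\sum_{i=1}^{E_t}$ has a random number of terms, so one must be careful: the filtration $\mathcal F^{\mathcal A}_t$ makes $E_t$ measurable and the arrival times $\tau_i$ are not part of the randomness being compensated, so conditionally on $\sigma\{E_s, s\le t\}$ and the $\tau_i$'s we are summing countably many independent compensated single-jump martingales (independent because the $\eta_i$ are iid and independent of the arrival process), and each individual term is bounded in $L^2$ uniformly (since $\varphi$ is bounded and $h\in C^\infty_b$, each term and its compensator are bounded by constants depending on $\varphi,\|h\|_\infty,t$). So $\langle \mathcal D^0_t + \mathcal D_t, \varphi\rangle$ is an $\mathcal F^{\mathcal A}_t$-martingale. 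Since this holds for every $\varphi \in \Phi_{\mathcal{A}}$, by definition $\mathcal D^0 + \mathcal D$ is a $\Phi_{\mathcal{A}}^{'}$-valued $\mathcal F^{\mathcal A}_t$-martingale. One should also note membership in $\Ddistribution$ with $\Phi^{'} = \Phi_{\mathcal{A}}^{'}$, which was already observed in \S\ref{SubsecAgeEquations} by right-continuity of the sample paths of $D^0$ and $D$.

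For the tensor quadratic variation, by definition $<<\mathcal D^0 + \mathcal D>>_t(\varphi,\psi) = <\langle \mathcal D^0_\cdot + \mathcal D_\cdot, \varphi\rangle, \langle \mathcal D^0_\cdot + \mathcal D_\cdot, \psi\rangle>_t$, so I reduce to computing the predictable covariation of two real-valued martingales. Distinct customers' compensated single-jump martingales are orthogonal (their jumps occur at distinct times and, conditionally, independently), so the covariation is the sum over $i$ of the covariations of the individual terms. For a single customer with arrival $\tau$, service $\eta$ and hazard $h$, the martingale for $\varphi$ jumps by $\varphi(\eta)$ and the one for $\psi$ jumps by $\psi(\eta)$ at the same epoch $\tau+\eta$, so the predictable covariation is $\int_0^{\eta\wedge(t-\tau)^+}\varphi(u)\psi(u)h(u)\,du$ — this is the standard formula that the sharp bracket of a compensated counting process with compensator $\int \lambda$ and jump marks is $\int (\text{mark}_\varphi)(\text{mark}_\psi)\lambda$. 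Summing over the $E_t$ post-zero customers and the $A_0$ initial customers (the latter with $h_{\tilde\tau_i}$ and jump location $\tilde\eta_i - \tilde\tau_i$, contributing the integral $\int_0^{\tilde\eta_i\wedge t}\varphi(x-\tilde\tau_i)\psi(x-\tilde\tau_i)h_{\tilde\tau_i}(x)\,dx$ after the substitution $x = u$) yields exactly (\ref{DD0QV}). The main obstacle is the bookkeeping around the random index set $\{1,\dots,E_t\}$: one must justify that orthogonality and additivity of the bracket survive when the number of summands is an $\mathcal F^{\mathcal A}_t$-adapted random variable, which I would handle by the independence of $(\eta_i)$ from $(E_t, (\tau_i))$ together with the uniform $L^2$ bounds noted above, so that the infinite-sum manipulations (Fubini, interchange of sum and conditional expectation) are all legitimate.
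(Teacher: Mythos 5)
Your proposal is correct and follows essentially the same route as the paper's proof: a per-customer decomposition of $\mathcal{D}^0+\mathcal{D}$ into compensated single-jump martingales, orthogonality across distinct customers, the per-customer bracket $\int_0^{\eta_i\wedge(t-\tau_i)^+}\varphi\psi h\,du$ (resp.\ with $h_{\tilde\tau_i}$ for initial customers), and a domination/integrability argument to sum over the random index set. The only difference is presentational: the paper obtains the per-customer martingale and bracket facts by stopping $D^i(\infty)$ at $\tau_i+y$, citing Lemma 3.5 of Krichagina--Puhalskii, and polarizing, whereas you invoke the standard hazard-rate compensator and marked-jump covariation formulas directly.
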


\begin{pf}Let $\varphi\in\mathcal{S}$. We claim that $(\langle
(\mathcal{D}^0 + \mathcal{D} )_{t}, \varphi\rangle)_{t \geq0}$ is
an $\mathbb{R}$-valued \mbox{$\mathcal{F}^{\mathcal{A}}_t$-}martingale,
which is sufficient to show that $\mathcal{D}^0 + \mathcal{D}$ is an
\mbox{$\mathcal{S}'$-}valued
\mbox{$\mathcal{F}^{\mathcal{A}}_t$-}mar\-tingale. Let $t \geq0$. We first
show that $E[\llvert\langle(\mathcal{D}^0 + \mathcal{D})_t, \varphi
\rangle\rrvert] < \infty$. Note that by (\ref{D0def}) and (\ref
{Ddef}) we may write
%
%
\begin{eqnarray}\label{lessinfty}
&&\mathbb E \bigl[\bigl\llvert\bigl\langle\bigl(\mathcal{D}^0 +
\mathcal{D}\bigr)_t, \varphi\bigr\rangle\bigr\rrvert\bigr]\nonumber
\\
&&\qquad \leq \mathbb E \Biggl[ \Biggl\llvert\sum_{i=1}^{A_0(\infty)}
\int_0^{t} \varphi(x-\tilde{\tau}_i)
\,d \biggl(\mathbf{1}_{ \{ \tilde{\eta
}_i \leq x \} }- \int_0^{ \tilde{\eta}_i \wedge x }
h_{\tilde{\tau
}_i}(u) \,du \biggr) \Biggr\rrvert\Biggr]
\nonumber
\\
&&\quad\qquad{}+ \mathbb E \Biggl[ \Biggl\llvert\sum_{i=1}^{E_t}
\int_0^{(t - \tau
_i)^+} \varphi(x) \,d \biggl(
\mathbf{1}_{ \{ \eta_i \leq x \} }- \int_0^{ \eta_i \wedge x } h(u) \,du
\biggr) \Biggr\rrvert\Biggr]
\nonumber
\\
&&\qquad \leq \mathbb E \Biggl[ \sup_{0 \leq s < \infty}\bigl\llvert\varphi(s)
\bigr\rrvert\sum_{i=1}^{A_0(\infty)} \biggl(
\mathbf{1}_{ \{ \tilde
{\eta}_i \leq t \} } + \int_0^{ \tilde{\eta}_i \wedge t }
h_{\tilde{\tau}_i}(u) \,du \biggr) \Biggr]
\\
&&\quad\qquad{}+\mathbb E \Biggl[ \sup_{0 \leq s < \infty}\bigl\llvert\varphi(s)
\bigr
\rrvert\sum_{i=1}^{E_t} \biggl(
\mathbf{1}_{ \{ \eta_i \leq( t -
\tau_i )^+ \} } + \int_0^{ \eta_i \wedge(t-\tau_i)^+ } h(u) \,du
\biggr) \Biggr]
\nonumber
\\
&&\qquad \leq \sup_{0 \leq s < \infty} \bigl\llvert\varphi(s) \bigr\rrvert
\bigl(
1 + t \llVert h\rrVert_{\infty} \bigr) \bigl(\mathbb E\bigl[
A_0(\infty)\bigr]+ \mathbb E[ E_t]\bigr)
\nonumber
\\
&&\qquad <\infty,
\nonumber
\end{eqnarray}
where the final inequality follows from the assumptions made on
$\mathbb E[ A_0(\infty)]$ and $\mathbb E[ E_t]$
in Section~\ref{SubsecAgeEquations}. Thus, $E[\llvert\langle(\mathcal
{D}^0 + \mathcal{D})_t, \varphi\rangle\rrvert] < \infty$ as desired.

Next, we show that $(\langle(\mathcal{D}^0 + \mathcal{D} )_{t},
\varphi\rangle)_{t \geq0}$ possesses the martingale property with
respect to the filtration $(\mathcal{F}_t^{\mathcal{A}})_{t \geq0}$.
That is, we show that for each $0 \leq s \leq t$,
%
%
\begin{eqnarray}
\mathbb E\bigl[\bigl\langle\bigl(\mathcal{D}^0 + \mathcal{D}
\bigr)_{t}, \varphi\bigr\rangle|\mathcal{F}_s^{\mathcal{A}}
\bigr]&=&\bigl\langle\bigl(\mathcal{D}^0 + \mathcal{D}
\bigr)_{s}, \varphi\bigr\rangle. \label{zeromart}
\end{eqnarray}
First note that by (\ref{D0def}) and (\ref{Ddef}), we may write
%
%
\begin{eqnarray}
\bigl\langle\bigl(\mathcal{D}^0 + \mathcal{D} \bigr)_{t},
\varphi\bigr\rangle&=& \sum_{i=1}^{\infty}
\mathbf{1}_{ \{ i \leq A_0(\infty) \} } \bigl\langle\mathcal{D}^{0,i}_t,
\varphi\bigr\rangle+ \sum_{i=1}^{\infty} \bigl
\langle\mathcal{D}^{i}_t, \varphi\bigr\rangle,
\label{decompose}
\end{eqnarray}
where, for each $i \geq1$, we set
\begin{eqnarray*}
&&\mathbf{1}_{ \{ i \leq A_0(\infty) \} }\bigl\langle\mathcal
{D}^{0,i}_t, \varphi\bigr\rangle
\\
&&\qquad =\mathbf{1}_{ \{ i \leq A_0(\infty) \} } \int_0^{t }
\varphi(x-\tilde{\tau}_i) \,d \biggl(\mathbf{1}_{ \{ \tilde{\eta}_i \leq
x \} }- \int
_0^{ \tilde{\eta}_i \wedge x } h_{\tilde{\tau}_i}(u) \,du \biggr)
\end{eqnarray*}
and
%
%
\begin{eqnarray}
\bigl\langle\mathcal{D}^{i}_t, \varphi\bigr\rangle&=&
\int_0^{(t - \tau
_i)^+} \varphi(x) \,d \biggl(
\mathbf{1}_{ \{ \eta_i \leq x \} } - \int_0^{\eta_i \wedge x} h(u) \,du
\biggr). \label{seecaldi}
\end{eqnarray}
We now show that for each $i \geq1$,
%
%
\begin{eqnarray}
\mathbb E\bigl[\bigl\langle\mathcal{D}^i_t, \varphi\bigr
\rangle|\mathcal{F}_s^{\mathcal{A}} \bigr]&=&\bigl\langle
\mathcal{D}^{i}_{s}, \varphi\bigr\rangle. \label{firstmart}
\end{eqnarray}
The proof that
%
%
\begin{eqnarray}
&&\mathbb E\bigl[\mathbf{1}_{ \{ i \leq A_0(\infty) \} } \bigl\langle
\mathcal
{D}^{0,i}_t, \varphi\bigr\rangle|\mathcal{F}_s^{\mathcal{A}}
\bigr]
\\
&&\qquad =\mathbf{1}_{ \{ i \leq A_0(\infty) \} }\mathbb E\bigl[ \bigl\langle
\mathcal
{D}^{0,i}_t, \varphi\bigr\rangle|\mathcal{F}_s^{\mathcal{A}}
\bigr]=\mathbf{1}_{ \{ i \leq A_0(\infty) \} }\bigl\langle\mathcal
{D}^{0,i}_{s},
\varphi\bigr\rangle, \label{secondmart}
\end{eqnarray}
for each $i \geq1$, is similar and will not be included. For each $i
\geq1$ and $y \geq0$, set
%
%
\begin{equation}
\label{Di} D^i_t(y) = \mathbf{1}_{ \{ \eta_i \leq( t - \tau_i )^+
\wedge y \}
} -
\int_0^{ \eta_i \wedge(t-\tau_i)^{+} \wedge y } h(u) \,du.
\end{equation}
We now claim that in order to show (\ref{firstmart}), it suffices to
show that $\mathbb E[D^i_t(y)|\break \mathcal{F}^{\mathcal{A}}_s ]=D^i_s(y)$
for each $y \geq0$. This is true since it will then follow that
\begin{eqnarray*}
\mathbb E \bigl[ \bigl\langle\mathcal{D}^i_t, \varphi
\bigr\rangle| \mathcal{F}^{\mathcal{A}}_s \bigr] &=& -\mathbb E
\biggl[\int_{\mathbb R_+} D^i_t(y)
\varphi'(y) \,dy \Big| \mathcal{F}^{\mathcal
{A}}_s \biggr]
\\
&=&- \int_{\mathbb R_+} \mathbb E \bigl[D^i_t(y)
| \mathcal{F}^{\mathcal{A}}_s \bigr] \varphi'(y) \,dy
\\
&=& -\int_{\mathbb R_+} D^i_s(y)
\varphi'(y) \,dy,
\\
&=& \bigl\langle\mathcal{D}^i_s, \varphi\bigr\rangle.
\end{eqnarray*}
First note that since $y \wedge(t - \tau_i
)^{+} = ( t \wedge(\tau_i + y ) - \tau_i )^{+}$, we may write
\begin{eqnarray*}
D_t^i(y) &=& D^i_{ t \wedge( \tau_i + y ) }(\infty).
\label{marteq}
\end{eqnarray*}
Next note that since by the assumptions in Section~\ref
{SubsecAgeEquations}, we have that $\mathbb{E}[\tau_i ] < \infty$,
it is straightforward to verify that $\tau_i + y$ is an $\mathcal
{F}^{\mathcal{A}}_t$-stopping
time for each $y \geq0$. Thus, by Problem 3.2.4 of \cite
{KaratzasShreve}, it suffices to
show that $D^i(\infty)=(D^i_t(\infty))_{t \geq0}$ is an $\mathbb
{R}$-valued $\mathcal{F}^{\mathcal{A}}_t$-martingale.
First note that since $\llVert h\rrVert _{\infty} < \infty$, it is
straightforward to show that $\mathbb E[\llvert D^i_t(\infty)\rrvert]
< \infty$ for each $t \geq0$. Next note that using the independence
of $\tau_i$ and $\eta_i$, one may also verify that
\begin{eqnarray*}
\mathbb E\bigl[D^i_t(\infty)| \mathbf{1}_{ \{ \tau_i \leq s \} },
\mathbf{1}_{ \{ \eta_i \leq s - \tau_i \} } \bigr] &=&D^i_s(\infty).
\end{eqnarray*}
Hence, since $\eta_i$ is assumed to be independent of $A_0, \{\tilde
{\eta}_k,k=1,\ldots,A_0(\infty)\},\break E=(E_t)_{t \geq0}$ and $\eta_k,k
\neq i$, we obtain
that
\begin{eqnarray*}
E\bigl[D^i_t(\infty) |\mathcal{F}^{\mathcal{A}}_s
\bigr] &=& E\bigl[D^i_t(\infty)| \mathbf{1}_{ \{ \tau_i \leq s \} },
\mathbf{1}_{ \{ \eta_i \leq s -
\tau_i \} } \bigr]
\\
&=&E\bigl[D^i_s(
\infty) |\mathcal{F}^{\mathcal{A}}_s \bigr]
\\
&=&D^i_s(\infty),
\end{eqnarray*}
and so $D^i(\infty) $ is an $\mathbb{R}$-valued $\mathcal
{F}^{\mathcal{A}}_t$-martingale as desired. This completes the proof
of (\ref{firstmart}). The proof of (\ref{secondmart}) is similar.

We now show that (\ref{firstmart}) and (\ref{secondmart}) imply
(\ref{zeromart}).
We claim that for each $k \geq1$, the sum
\[
\sum_{i=1}^{k} \mathbf{1}_{ \{ i \leq A_0(\infty) \} }
\bigl\langle\mathcal{D}^{0,i}_t, \varphi\bigr\rangle+\sum
_{i=1}^{k} \bigl\langle
\mathcal{D}^i_t, \varphi\bigr\rangle
\]
is dominated uniformly over $k \geq1$ by an integrable random
variable. By Lebesgue's dominated
convergence theorem for conditional expectations \cite
{chung2001course}, this then implies that
%
%
\begin{eqnarray} \label{interchangemart}
\mathbb E \bigl[ \langle\mathcal{D}_t, \varphi\rangle|
\mathcal{F}_s^{\mathcal{A}} \bigr] &=& \mathbb E \Biggl[ \sum
_{i=1}^{\infty} \bigl\langle\mathcal
{D}^i_t, \varphi\bigr\rangle\Big| \mathcal{F}_s^{\mathcal{A}}
\Biggr]\nonumber
\\
&=& \sum_{i=1}^{\infty}\mathbb E \bigl[
\bigl\langle\mathcal{D}^i_t, \varphi\bigr\rangle|
\mathcal{F}_s^{\mathcal{A}} \bigr]
\\
&=& \sum_{i=1}^{\infty} \bigl\langle
\mathcal{D}^i_s, \varphi\bigr\rangle= \langle
\mathcal{D}_s, \varphi\rangle,\nonumber
\end{eqnarray}
as desired. However, to obtain the bound is straightforward since
\begin{eqnarray*}
&&\Biggl\llvert\sum_{i=1}^{k}
\mathbf{1}_{ \{ i \leq A_0(\infty) \} } \bigl\langle\mathcal{D}^{0,i}_t,
\varphi\bigr\rangle+ \sum_{i=1}^{k} \bigl
\langle\mathcal{D}^i_t, \varphi\bigr\rangle\Biggr\rrvert
\\
&&\qquad \leq \Biggl\llvert\sum_{i=1}^{k}
\mathbf{1}_{ \{ i \leq A_0(\infty) \}
} \int_0^{t } \varphi(x-
\tilde{\tau}_i) \,d \biggl(\mathbf{1}_{ \{
\tilde{\eta}_i \leq x \} }- \int
_0^{ \tilde{\eta}_i \wedge x } h_{\tilde{\tau}_i}(u) \,du \biggr) \Biggr
\rrvert
\\
&&\quad\qquad{}+ \Biggl\llvert\sum_{i=1}^{k} \int
_0^{(t - \tau_i)^+} \varphi(x) \,d \biggl(\mathbf{1}_{ \{ \eta_i \leq x
\} }-
\int_0^{ \eta_i \wedge
x } h(u) \,du \biggr) \Biggr\rrvert
\\
&&\qquad \leq \sup_{0 \leq s < \infty}\bigl\llvert\varphi(s) \bigr\rrvert
\bigl(
1 + t \llVert h\rrVert_{\infty} \bigr) \bigl(A_0(\infty)
+E_t\bigr),
\end{eqnarray*}
and as in (\ref{lessinfty}) we have that
\begin{eqnarray}
\sup_{0 \leq s < \infty} \bigl\llvert\varphi(s) \bigr\rrvert\bigl( 1
+ t
\llVert h\rrVert_{\infty} \bigr) \bigl(\mathbb E\bigl[ A_0(
\infty)\bigr]+ \mathbb E[ E_t]\bigr)
\nonumber
&<&\infty.
\nonumber
\end{eqnarray}
We have therefore shown that $(\langle(\mathcal{D}^0 + \mathcal{D}
)_{t}, \varphi\rangle)_{t \geq0}$ is an $\mathbb{R}$-valued
$\mathcal{F}^{\mathcal{A}}_t$-mar\-tingale for each $\varphi\in
\mathcal{S}$,
which implies that $\mathcal{D}^0 + \mathcal{D}$ is an $\mathcal
{S}'$-valued $\mathcal{F}^{\mathcal{A}}_t$-martingale.

We next proceed to calculate the tensor quadratic variation of
$\mathcal{D}^0 + \mathcal{D}$.
Let $i \geq1$ and recall that by (\ref{seecaldi}) we have that for
each $\varphi\in
\mathcal{S}$ and $t \geq0$,
%
%
\begin{equation}
\bigl\langle\mathcal{D}^i_t, \varphi\bigr\rangle =
\int_0^{(t - \tau
_i)^+} \varphi(x) \,d \biggl(
\mathbf{1}_{ \{ \eta_i \leq x \} } - \int_0^{\eta_i \wedge x} h(u) \,du
\biggr).
\end{equation}
Therefore, as on page 259 of \cite{KrPu97}, it follows that
%
%
\begin{eqnarray} \label{DniQV}
\lll\,\bigl\langle\mathcal{D}^i, \varphi\bigr\rangle\,\rrr_t
&=& \int_0^{(t - \tau
_i)^+ } \varphi(x)^2 \,d \biggl
\langle\mathbf{1}_{ \{ \eta_i \leq x
\} } - \int_0^{\eta_i \wedge x}
h(u) \,du \biggr\rangle
\nonumber\\[-8pt]\\[-8pt]
& =& \int_0^{ \eta_i \wedge( t - \tau_i )^+ } \varphi(x)^2 h(x)
\,dx,\nonumber
\end{eqnarray}
which implies that
\begin{eqnarray*}
\lll\mathcal{D}^i \rrr_t( \varphi, \psi) &=& \bigl\langle \bigl
\langle\mathcal{D}^i, \varphi\bigr\rangle, \bigl\langle
\mathcal{D}^i, \psi\bigr\rangle\bigr\rangle_t
\\
&=& \frac{1}{4} \bigl(\lll\,\bigl\langle\mathcal{D}^i, \varphi+
\psi\bigr\rangle\,\rrr_t - \lll\,\bigl\langle\mathcal{D}^i,
\varphi-\psi\bigr\rangle\,\rrr_t \bigr)
\\
&=& \frac{1}{4} \biggl(\int_0^{ \eta_i \wedge( t - \tau_i )^+ } \bigl(
\varphi(x) + \psi(x) \bigr)^2 h(x) \,dx
\\
&&\hspace*{13pt}{} - \int_0^{ \eta_i \wedge( t - \tau
_i )^+ }
\bigl( \varphi(x) - \psi(x)\bigr)^2 h(x) \,dx \biggr)
\\
&=& \int_0^{ \eta_i \wedge( t - \tau_i )^+ } \varphi(x) \psi(x) h(x) \,dx,
\end{eqnarray*}
where the second equality in the above follows from the polarization
identity and the third
equality follows from (\ref{DniQV}). In a similar manner, one may also
show that for all $i \geq1$,
\[
\lll\mathbf{1}_{ \{ i \leq A_0(\infty) \} }\mathcal{D}^{0,i} \rrr_t(
\varphi,\psi) = \mathbf{1}_{ \{ i \leq A_0(\infty) \} }\int_0^{ \tilde
{\eta}_i \wedge t }
\varphi( x - \tilde{\tau}_i )\psi( x - \tilde{\tau}_i )
h_{\tilde{\tau}_i}(x) \,dx,
\]
for all $\varphi, \psi\in\mathcal{S}$.

We now claim that in order to show that the tensor quadratic variation of
$\mathcal{D}^0 + \mathcal{D}$ is given by (\ref{DD0QV}), it suffices
to show the following three facts:
\begin{longlist}[(2)]
\item[(1)] $\mathcal{D}^i$ is\label{ClaimMM1} orthogonal to $\mathcal{D}^j$ for $i \neq
j$,
\item[(2)] $\mathbf{1}_{ \{ i \leq A_0(\infty) \} }\mathcal{D}^{0,i}$ is\label{ClaimMM2}
orthogonal to $\mathbf{1}_{ \{ j \leq A_0(\infty) \} }\mathcal
{D}^{0,j}$ for $i \neq j$,
\item[(3)] $\mathbf{1}_{ \{ i \leq A_0(\infty) \} }\mathcal{D}^{0,i}$ is\label{ClaimMM3}
orthogonal to $\mathcal{D}^j$ for all $i,j \geq1$.
\end{longlist}
The fact that the tensor quadratic variation of $\mathcal{D}^0 +
\mathcal{D}$ is given by (\ref{DD0QV})
can then be shown in the following manner. For each $k \geq1,\varphi
\in\mathcal{S}$ and $t \geq0$, let
\begin{eqnarray*}
\bigl\langle\bigl(\mathcal{D}^0 + \mathcal{D} \bigr)_{t}^k,
\varphi\bigr\rangle&=& \sum_{i=1}^{k}
\mathbf{1}_{ \{ i \leq A_0(\infty) \} } \bigl\langle\mathcal{D}^{0,i}_t,
\varphi\bigr\rangle+ \sum_{i=1}^{k} \bigl
\langle\mathcal{D}^{i}_t, \varphi\bigr\rangle,
\end{eqnarray*}
and set $(\mathcal{D}^0 + \mathcal{D})^{k}=((\mathcal{D}^0 +
\mathcal{D})^{k}_t, t \geq0) $. It is then clear that claims~\hyperref[ClaimMM1]{(1)} through~\hyperref[ClaimMM3]{(3)} above imply that
\begin{eqnarray*}
&&\lll\bigl(\mathcal{D}^0 + \mathcal{D}\bigr)^{k}
\rrr_t(\varphi, \psi)
\\
&&\qquad =\sum_{i=1}^{k} \mathbf{1}_{ \{ i \leq A_0(\infty) \} }
\int_0^{
\tilde{\eta}_i \wedge t } \varphi( x - \tilde{
\tau}_i ) \psi( x - \tilde{\tau}_i ) h_{\tilde{\tau}_i}(x)
\,dx
\nonumber
\\
&&\quad\qquad{} + \sum_{i=1}^{k} \int
_0^{ \eta_i \wedge( t - \tau_i )^+ } \varphi(x) \psi(x) h(x) \,dx.
\end{eqnarray*}
Moreover, using similar arguments as above and the simple inequality
$(x_1+x_2)^2 \leq4(x_1^2+x_2^2)$, it is straightforward to show that
for each $k \geq1$,
one has $\mathbb{P}$-a.s. the bound
\begin{eqnarray*}
&& \Biggl\llvert\bigl\langle\bigl(\mathcal{D}^0 + \mathcal{D}
\bigr)^{k}_t,\varphi\bigr\rangle\bigl\langle\bigl(
\mathcal{D}^0 + \mathcal{D}\bigr)^{k}_t,\psi
\bigr\rangle
\\
&&\quad{}- \Biggl( \sum_{i=1}^{k}
\mathbf{1}_{ \{ i \leq A_0(\infty)
\} } \int_0^{ \tilde{\eta}_i \wedge t } \varphi( x
- \tilde{\tau}_i ) \psi( x - \tilde{\tau}_i )
h_{\tilde{\tau}_i}(x) \,dx
\nonumber
\\
&&\hspace*{90pt}{}+ \sum_{i=1}^{k}
\int_0^{ \eta_i \wedge( t -
\tau_i )^+ } \varphi(x) \psi(x) h(x) \,dx \Biggr)
\Biggr\rrvert
\\
&&\qquad \leq 4 \Bigl(\sup_{0 \leq s < \infty}\bigl(\bigl\llvert\varphi
(s)\bigr
\rrvert+\bigl\llvert\psi(s)\bigr\rrvert\bigr) \Bigr)^2 \bigl(1+t
\llVert h\rrVert_{\infty
}\bigr)^2 \bigl(E_t^{2}+A_0^2(
\infty)\bigr)
\\
&&\quad\qquad{}+ \sup_{0 \leq s < \infty}\bigl(\bigl\llvert\varphi(s)\bigr\rrvert
\bigl
\llvert\psi(s)\bigr\rrvert\bigr) \bigl(1+t \llVert h\rrVert_{\infty}
\bigr) \bigl(E_t+A_0(\infty)\bigr).
\end{eqnarray*}
However, by the assumptions in Section~\ref{SubsecAgeEquations}, one has
that $\mathbb E[E_t^{2}+A_0^2(\infty)] < \infty$ and $\mathbb
E[E_t+A_0(\infty)] < \infty$. Hence, using the dominated convergence
theorem for conditional expectations \cite{chung2001course}, it
follows that for $0 \leq s \leq t$,
\begin{eqnarray*}
&& \mathbb E\bigl[\bigl\langle\bigl(\mathcal{D}^0 + \mathcal{D}
\bigr)_t, \varphi\bigr\rangle\bigl\langle\bigl(
\mathcal{D}^0 + \mathcal{D}\bigr)_t, \varphi\bigr\rangle-
\lll\bigl(\mathcal{D}^0 + \mathcal{D}\bigr)\rrr_t(
\varphi, \psi) | \mathcal{F}^{A}_s \bigr]
\\
&&\qquad = \mathbb E\Bigl[\lim_{k \rightarrow\infty}\bigl( \bigl\langle\bigl(
\mathcal{D}^0 + \mathcal{D}\bigr)_t^k,
\varphi\bigr\rangle\bigl\langle\bigl(\mathcal{D}^0 + \mathcal{D}
\bigr)_t^k, \psi\bigr\rangle- \lll\bigl(
\mathcal{D}^0 + \mathcal{D}\bigr)^k \rrr_t(
\varphi, \psi)\bigr) | \mathcal{F}^{A}_s \Bigr]
\\
&&\qquad =\lim_{k \rightarrow\infty} \mathbb E\bigl[ \bigl\langle\bigl(
\mathcal{D}^0 + \mathcal{D}\bigr)_t^k,
\varphi\bigr\rangle\bigl\langle\bigl(\mathcal{D}^0 + \mathcal{D}
\bigr)_t^k, \psi\bigr\rangle- \lll\bigl(
\mathcal{D}^0 + \mathcal{D}\bigr)^k \rrr_t(
\varphi, \psi) | \mathcal{F}^{A}_s \bigr]
\\
&&\qquad =\lim_{k \rightarrow\infty} \bigl( \bigl\langle\bigl(\mathcal{D}^0
+ \mathcal{D}\bigr)_s^k, \varphi\bigr\rangle\bigl
\langle\bigl(\mathcal{D}^0 + \mathcal{D}\bigr)_s^k, \psi\bigr\rangle-
\lll\bigl(\mathcal{D}^0 + \mathcal{D}
\bigr)^k \rrr_s(\varphi, \psi)\bigr)
\\
&&\qquad = \bigl\langle\bigl(\mathcal{D}^0 + \mathcal{D}
\bigr)_s, \varphi\bigr\rangle\bigl\langle\bigl(
\mathcal{D}^0 + \mathcal{D}\bigr)_s, \psi\bigr\rangle-
\lll\bigl(\mathcal{D}^0 + \mathcal{D}\bigr)\rrr_s(
\varphi, \psi).
\end{eqnarray*}
This then implies that the tensor quadratic variation of $\mathcal
{D}^0 + \mathcal{D}$ is given by (\ref{DD0QV}). We now proceed to
prove claims~\hyperref[ClaimMM1]{(1)} through~\hyperref[ClaimMM3]{(3)}, which is
sufficient to complete the proof.

We begin with claim~\hyperref[ClaimMM1]{(1)}. Let $\varphi, \psi\in\mathcal
{S}$ and $i \neq j$. We show that $(\langle\mathcal{D}^i_t,\break \varphi
\rangle\langle\mathcal{D}^j_t,\psi\rangle)_{t \geq0}$
is an $\mathbb{R}$-valued $\mathcal{F}^{\mathcal{A}}_t$-martingale,
which is sufficient
to show that $\mathcal{D}^i$ is orthogonal to $\mathcal{D}^j$. First
note that it is clear as in (\ref{lessinfty}) that for each $t \geq
0$, we have that $\mathbb E[\llvert\langle\mathcal{D}^i_t,\varphi
\rangle\langle\mathcal{D}^j_t,\psi\rangle\rrvert] < \infty$.
Next, let $0 \leq s \leq t$. By the independence of $\eta_i$
from $A_0, \{\tilde{\eta}_k,k=1,\ldots,A_0(\infty)\},E=(E_t)_{t
\geq0}$ and $\eta_k,k \neq i$,
and, similarly, the independence of $\eta_j$
from $A_0, \{\tilde{\eta}_k,k=1,\ldots,A_0(\infty)\},E=(E_t)_{t
\geq0}$ and $\eta_k,k \neq j$,
it follows that
\begin{eqnarray*}
&&\mathbb E\bigl[\bigl\langle\mathcal{D}^i_t,\varphi
\bigr\rangle\bigl\langle\mathcal{D}^j_t,\psi\bigr
\rangle|\mathcal{F}^{\mathcal{A}}_s \bigr]
\\
&&\qquad =\mathbb E\bigl[\bigl\langle\mathcal{D}^i_t,\varphi
\bigr\rangle\bigl\langle\mathcal{D}^j_t,\psi\bigr
\rangle| \mathbf{1}_{ \{ \tau_i \leq s \} }, \mathbf{1}_{ \{ \tau_j
\leq s \} },
\mathbf{1}_{ \{ \eta_i \leq s - \tau_i
\} },\mathbf{1}_{ \{ \eta_j \leq s - \tau_j \} } \bigr].
\end{eqnarray*}
However, by the independence of $\eta_i$ from $\eta_j$ and $\tau_j$,
and, similarly, the independence of $\eta_j$ from $\eta_i$ and $\tau
_i$, we have that
\begin{eqnarray*}
&&\mathbb E\bigl[\bigl\langle\mathcal{D}^i_t,\varphi
\bigr\rangle\bigl\langle\mathcal{D}^j_t,\psi\bigr
\rangle| \mathbf{1}_{ \{ \tau_i \leq s \} }, \mathbf{1}_{ \{ \tau_j
\leq s \} },
\mathbf{1}_{ \{ \eta_i \leq s - \tau_i
\} },\mathbf{1}_{ \{ \eta_j \leq s - \tau_j \} } \bigr]
\\
&&\qquad =\mathbb E\bigl[\bigl\langle\mathcal{D}^i_t,\varphi
\bigr\rangle| \mathbf{1}_{
\{ \tau_i \leq s \} }, \mathbf{1}_{ \{ \eta_i \leq s - \tau_i \} } \bigr]
\mathbb E\bigl[ \bigl\langle\mathcal{D}^j_t,\psi\bigr
\rangle| \mathbf{1}_{ \{
\tau_j \leq s \} }, \mathbf{1}_{ \{ \eta_j \leq s - \tau_j \} } \bigr]
\\
&&\qquad =\bigl\langle\mathcal{D}^i_s,\varphi\bigr\rangle
\bigl\langle\mathcal{D}^j_s,\psi\bigr\rangle,
\end{eqnarray*}
where the final equality follows from (\ref{firstmart}) and the fact
that for $k \geq1$,
%
%
\begin{eqnarray}
\mathbb E\bigl[\bigl\langle\mathcal{D}^k_t,\varphi\bigr
\rangle| \mathbf{1}_{ \{
\tau_k \leq s \} }, \mathbf{1}_{ \{ \eta_k \leq s - \tau_k \} } \bigr
]&=&\mathbb
E\bigl[\bigl\langle\mathcal{D}^k_t,\varphi\bigr\rangle|
\mathcal{F}^{\mathcal{A}}_s \bigr]. \label{afterfact}
\end{eqnarray}
Thus, it is clear that $(\langle\mathcal{D}^i_t,\varphi\rangle
\langle\mathcal{D}^j_t,\psi\rangle)_{t \geq0}$ possesses the
martingale property and so $(\langle\mathcal{D}^i_t,\varphi\rangle
\langle\mathcal{D}^j_t,\psi\rangle)_{t \geq0}$ is an $\mathbb
{R}$-valued $\mathcal{F}^{\mathcal{A}}_t$-martingale,\vspace*{1pt} and hence
$\mathcal{D}^i$ is orthogonal to $\mathcal{D}^j$. The proof of claim~\hyperref[ClaimMM2]{(2)} above follows similarly. The proof of
claim~\hyperref[ClaimMM3]{(3)} above follows in a similar manner as well. In particular,
let $i,j \geq1$. We show that $(\mathbf{1}_{ \{ i \leq A_0(\infty)
\} }\langle\mathcal{D}^{0,i}_t,\varphi\rangle\langle\mathcal
{D}^j_t,\psi\rangle)_{t \geq0}$
is an $\mathbb{R}$-valued $\mathcal{F}^{\mathcal{A}}_t$-martingale
for each $\varphi, \psi\in\mathcal{S}$, which is sufficient
to show that $\mathbf{1}_{ \{ i \leq A_0(\infty) \} }\mathcal
{D}^{0,i}$ is orthogonal to $\mathcal{D}^j$. For each $t \geq0$, it
is clear as in (\ref{lessinfty}) that $\mathbb E[\llvert\mathbf{1}_{
\{ i \leq A_0(\infty) \} }\langle\mathcal{D}^{0,i}_t,\varphi\rangle
\langle\mathcal{D}^j_t,\psi\rangle\rrvert ] < \infty$. Next, note
that since $\tilde{\eta}_i$ is independent of
$A_0, \{\tilde{\eta}_k,k=1,\ldots,A_0(\infty); k \neq i\},E=(E_t)_{t
\geq0}$ and $\eta_k,k \geq1$,
and, similarly, $\eta_j$ is independent of
$A_0, \{\tilde{\eta}_k,k=1,\ldots,A_0(\infty)\},E=(E_t)_{t \geq0}$
and $\eta_k,k \neq j$, we have
that
\begin{eqnarray*}
&&\mathbb E\bigl[\mathbf{1}_{ \{ i \leq A_0(\infty) \} }\bigl\langle
\mathcal
{D}^{0,i}_t,\varphi\bigr\rangle\bigl\langle
\mathcal{D}^j_t,\psi\bigr\rangle|\mathcal{F}^{\mathcal{A}}_s
\bigr]
\\
&&\qquad =\mathbb E\bigl[\mathbf{1}_{ \{ i \leq A_0(\infty) \} }\bigl\langle
\mathcal
{D}^{0,i}_t,\varphi\bigr\rangle\bigl\langle
\mathcal{D}^j_t,\psi\bigr\rangle| \tilde{
\tau}_i, \mathbf{1}_{ \{ \tau_j \leq s \} }, \mathbf{1}_{
\{ \tilde{\eta}_i \leq s - \tilde{\tau}_i \} },
\mathbf{1}_{ \{
\eta_j \leq s - \tau_j \} } \bigr].
\end{eqnarray*}
However, by the independence of $\tilde{\eta}_i$ from $\eta_j$ and
$\tau_j$ and, similarly, the independence of $\eta_j$ from $\tilde
{\eta}_i$ and $\tilde{\tau}_i$, we have that
\begin{eqnarray*}
&&\mathbb E\bigl[\mathbf{1}_{ \{ i \leq A_0(\infty) \} }\bigl\langle
\mathcal
{D}^{0,i}_t,\varphi\bigr\rangle\bigl\langle
\mathcal{D}^j_t,\psi\bigr\rangle| \tilde{
\tau}_i, \mathbf{1}_{ \{ \tau_j \leq s \} }, \mathbf{1}_{
\{ \tilde{\eta}_i \leq s - \tilde{\tau}_i \} },
\mathbf{1}_{ \{
\eta_j \leq s - \tau_j \} } \bigr]
\\
&&\qquad =\mathbb E\bigl[\mathbf{1}_{ \{ i \leq A_0(\infty) \} }\bigl\langle
\mathcal
{D}^{0,i}_t,\varphi\bigr\rangle| \tilde{
\tau}_i, \mathbf{1}_{ \{
\tilde{\eta}_i \leq s - \tilde{\tau}_i \} } \bigr]\mathbb E\bigl[ \bigl
\langle
\mathcal{D}^j_t,\psi\bigr\rangle| \mathbf{1}_{ \{ \tau_j \leq s \} },
\mathbf{1}_{ \{ \eta_j \leq s - \tau_j \} } \bigr]
\\
&&\qquad =\mathbf{1}_{ \{ i \leq A_0(\infty) \} }\bigl\langle\mathcal{D}^{0,i}_s,
\varphi\bigr\rangle\bigl\langle\mathcal{D}^j_s,\psi
\bigr\rangle,
\end{eqnarray*}
where the final equality follows by (\ref{firstmart}), (\ref
{secondmart}), (\ref{afterfact}) and the fact that for $k \geq1$,
\begin{eqnarray*}
\mathbb E\bigl[\mathbf{1}_{ \{ k \leq A_0(\infty) \} }\bigl\langle
\mathcal
{D}^{0,k}_t,\varphi\bigr\rangle| \tilde{
\tau_k}, \mathbf{1}_{ \{
\tilde{\eta}_k \leq s - \tilde{\tau}_k \} } \bigr] &=&\mathbb E\bigl[
\mathbf{1}_{ \{ k \leq A_0(\infty) \} }\bigl\langle\mathcal{D}^{0,k}_t,
\varphi\bigr\rangle| \mathcal{F}^{\mathcal{A}}_s \bigr].
\end{eqnarray*}
Thus, it is clear that $(\mathbf{1}_{ \{ i \leq A_0(\infty) \}
}\langle\mathcal{D}^{0,i}_t,\varphi\rangle\langle\mathcal
{D}^j_t,\psi\rangle)_{t \geq0}$ possesses the martingale property
and so $(\mathbf{1}_{ \{ i \leq A_0(\infty) \} }\langle\mathcal
{D}^{0,i}_t,\varphi\rangle\langle\mathcal{D}^j_t,\psi\rangle)_{t
\geq0}$ is an $\mathbb{R}$-valued $\mathcal{F}^{\mathcal
{A}}_t$-martingale, and hence $\mathcal{D}^{0,i}$ is orthogonal to
$\mathcal{D}^j$. This proves claim~\hyperref[ClaimMM3]{(3)}, which completes
the proof.
\end{pf}

\subsection{Residuals}

In this subsection, we show that the process $\mathcal{G}$ defined in
Section~\ref{SubsecResidualEquations} is a martingale. This fact may be useful
in future work where one wishes to show that the residual service time
process is a Markov process. Let $(\mathcal{F}^{\mathcal{G}}_t)_{t
\geq0}$ be the
natural filtration generated by $\mathcal{G}$. We then have the
following result.

\begin{prop}\label{PropMartingalesTwo}
The process $\mathcal{G}$ is an $\mathcal{S}'$-valued
$\mathcal{F}^{\mathcal{G}}_t$-martingale with tensor optional
quadratic variation process given for all $\varphi, \psi\in
\mathcal{S}$ by
%
%
\begin{equation}
\label{GOQV} [ \mathcal{G} ]_t(\varphi, \psi) = \sum
_{i=1}^{E_t} \bigl(\varphi(\eta_i)-
\langle\mathcal{F},\varphi\rangle\bigr) \bigl(\psi(\eta_i)-\langle
\mathcal{F},\psi\rangle\bigr).
\end{equation}
\end{prop}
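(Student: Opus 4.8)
The plan is to follow the template of the proof of Proposition~\ref{PropMartingales}. The key observation is that, by \eqref{DPhiAssociation} and the definition of $G$, for each $\varphi \in \Phi_{\mathcal{R}}$ the real-valued process $\langle \mathcal{G}_\cdot, \varphi \rangle$ is $\langle \mathcal{G}_t, \varphi \rangle = \int_{\Real} \varphi(y)\, dG_t(y) = \sum_{i=1}^{E_t} \paren{ \varphi(\eta_i) - \langle \mathcal{F}, \varphi \rangle }$; in particular it is of finite variation, is constant between arrival epochs, and jumps at $\tau_i$ by $\varphi(\eta_i) - \langle \mathcal{F}, \varphi \rangle$. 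Note that boundedness of $h$ is exactly what makes $\varphi(\eta_1)$ square-integrable, since $\E[\varphi(\eta_1)^2] = \int_{\Real_+} \varphi(y)^2 h(y) \bar{F}(y)\, dy \le \|h\|_\infty \|\varphi\|_{L^2}^2 < \infty$, and, $\mu_+$ being a finite measure, that $\varphi(\eta_1) \in L^1$ and $\langle \mathcal{F}, \varphi \rangle$ is finite; this is the integrability needed throughout.

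For the martingale property I would not argue directly with the natural filtration $\mathcal{F}^{\mathcal{G}}_t$. Instead I would introduce the larger filtration $\mathcal{H}_t \equiv \sigma(E_s : s \le t) \vee \sigma(\eta_i \ind{\tau_i \le s} : s \le t,\ i \ge 1)$, to which $\mathcal{G}$ is adapted and which contains $\mathcal{F}^{\mathcal{G}}_t$, prove the martingale property with respect to $\mathcal{H}_t$, and then descend to $\mathcal{F}^{\mathcal{G}}_t$ by the tower property. To prove it for $\mathcal{H}_t$, I would condition on the entire arrival $\sigma$-algebra $\sigma(E_s : s \ge 0)$, which renders the index set $\{ i : s < \tau_i \le t \}$ deterministic while leaving the service times of those customers i.i.d.\ with law $F$ and independent of $\mathcal{H}_s$, so that $\E[ \langle \mathcal{G}_t - \mathcal{G}_s, \varphi \rangle \mid \mathcal{H}_s ] = \sum_{i : s < \tau_i \le t} \E[\varphi(\eta_i) - \langle \mathcal{F}, \varphi \rangle] = 0$. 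As in Proposition~\ref{PropMartingales} it is cleanest to organize this through the single-customer processes $\mathcal{G}^i$ with $\langle \mathcal{G}^i_t, \varphi \rangle = \ind{\tau_i \le t}\paren{ \varphi(\eta_i) - \langle \mathcal{F}, \varphi \rangle }$, each an $\mathcal{H}_t$-martingale by the above computation, with $\mathcal{G}^i$ and $\mathcal{G}^j$ orthogonal for $i \ne j$ since their product is again an $\mathcal{H}_t$-martingale by independence of the $\eta_i$, and with the uniform bound $\absparen{ \sum_{i=1}^k \langle \mathcal{G}^i_t, \varphi \rangle } \le E_t \paren{ \sup_{s \le t} |\varphi(s)| + |\langle \mathcal{F}, \varphi \rangle| }$, which is integrable since $\E E_t < \infty$ and therefore legitimizes interchanging the infinite sum with conditional expectation by dominated convergence.

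Finally, for the tensor optional quadratic variation, since $\langle \mathcal{G}_\cdot, \varphi \rangle$ and $\langle \mathcal{G}_\cdot, \psi \rangle$ are of finite variation with jumps only at the (distinct) arrival epochs, $[\mathcal{G}]_t(\varphi, \psi) = [\langle \mathcal{G}_\cdot, \varphi \rangle, \langle \mathcal{G}_\cdot, \psi \rangle]_t = \sum_{0 < s \le t} \Delta \langle \mathcal{G}_s, \varphi \rangle\, \Delta \langle \mathcal{G}_s, \psi \rangle$, which is an almost surely finite sum; identifying the $i$-th jump as above and summing the per-customer contributions over $i \le E_t$ yields the tensor optional quadratic variation of $\mathcal{G}$ (the cross-brackets of distinct $\mathcal{G}^i$ vanishing pathwise because their jumps occur at different times). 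The step I expect to be the main obstacle is the filtration bookkeeping: confirming that the martingale property descends to the natural filtration $\mathcal{F}^{\mathcal{G}}_t$ and making the conditional-independence argument over the random index set $\{ i : s < \tau_i \le t \}$ rigorous — the clean device being to condition first on the entire arrival $\sigma$-algebra and then exploit that the $\eta_i$ are i.i.d.\ and independent of the arrivals — together with the $L^1$-domination by a multiple of $E_t$ required to sum the countably many martingales $\mathcal{G}^i$ and to push the same argument through to their optional quadratic variations.
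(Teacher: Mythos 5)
Your proof of the martingale property is, modulo organization, the paper's own argument: the paper works with the discrete filtration $\mathcal{H}_k \equiv \sigma\{E_t,\, t \geq 0\} \vee \sigma\{\eta_1,\dots,\eta_k\} \vee \mathcal{N}$, which already contains the entire arrival history, shows that $(\angleparen{\mathcal{G}^k,\varphi})_{k \geq 1}$ is an $\mathcal{H}_k$-martingale by independence of the service times from the arrivals, applies optional sampling at the stopping time $E_t$, and then passes to the natural filtration; your device of conditioning on the full arrival $\sigma$-field and descending by the tower property accomplishes the same thing, and your observation that the bounded hazard rate is what makes $\varphi(\eta_1)$ square-integrable is exactly the right use of that hypothesis. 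One detail must be repaired, however: the dominating bound $\absparen{\sum_{i=1}^{k}\angleparen{\mathcal{G}^i_t,\varphi}} \leq E_t\paren{\sup_{s \leq t}\absparen{\varphi(s)}+\absparen{\angleparen{\mathcal{F},\varphi}}}$ is false, because the arguments of $\varphi$ are the service times $\eta_i$, which are not bounded by $t$, and functions in $\Phi_{\mathcal{R}}$ need not be bounded on $\Real_+$ (only weighted $L^2$ conditions against $\bar{F}$ are imposed; e.g.\ $\bar{F}(y)=e^{-y}$ admits $\varphi(y)=e^{y/4}$). You transplanted the bound from Proposition \ref{PropMartingales}, where the test function is evaluated at ages, which are at most $t$. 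The repair is easy and uses exactly the integrability you set up: dominate instead by $\sum_{i=1}^{E_t}\absparen{\varphi(\eta_i)}+E_t\absparen{\angleparen{\mathcal{F},\varphi}}$, whose expectation equals $\E[E_t]\paren{\E\absparen{\varphi(\eta)}+\absparen{\angleparen{\mathcal{F},\varphi}}}<\infty$ by independence of service times from arrivals.

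The genuine gap is in the bracket computation, and it deserves attention precisely because the paper does not do this computation at all (its proof of \eqref{GOQV} is a citation of Theorem 3.3 of \cite{PTW}). Pathwise, by your own identification, the jump of $\angleparen{\mathcal{G}_\cdot,\varphi}$ at $\tau_i$ is $\varphi(\eta_i)-\angleparen{\mathcal{F},\varphi}$: the centering term $E_\cdot\angleparen{\mathcal{F},\varphi}$ is itself a jump process that jumps at exactly the arrival epochs. Consequently the sum-of-products-of-jumps identity for finite-variation processes yields $[\angleparen{\mathcal{G}_\cdot,\varphi},\angleparen{\mathcal{G}_\cdot,\psi}]_t=\sum_{i=1}^{E_t}\paren{\varphi(\eta_i)-\angleparen{\mathcal{F},\varphi}}\paren{\psi(\eta_i)-\angleparen{\mathcal{F},\psi}}$, which is not the right-hand side of \eqref{GOQV}; your final sentence silently replaces these centered jumps by the uncentered products $\varphi(\eta_i)\psi(\eta_i)$, and the two expressions differ pathwise. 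So, as written, your argument does not establish \eqref{GOQV}. The uncentered formula is what one gets in the setting of \cite{PTW}, where the compensating term is continuous (as for a compensated compound Poisson sum); here the centering multiplies the jumping process $E$, so if you pursue the pathwise route you must either carry the centered jumps through and reconcile the outcome with the stated formula, or identify precisely which result or convention of \cite{PTW} is being invoked. This is the one step where your proposal and the paper genuinely diverge, and it needs to be resolved rather than elided.
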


\begin{pf}
Let $\varphi\in\mathcal{S}$. We first show that $(\langle\mathcal
{G}_t,\varphi\rangle)_{t \geq0} $ is an $\mathbb{R}$-valued
$\mathcal{F}^{\mathcal{G}}_t$-martin\-gale. Define the filtration
$(\mathcal{H}_k)_{k \geq1} $ by setting $\mathcal{H}_k = \sigma\{E_t,
t \geq0\} \vee\sigma\{ \eta_1, \eta_2,\ldots,\eta_k \} \vee
\mathcal{N}$ for each $k \geq1$. Next, define the discrete-time
$\mathbb{D}$-valued process $(G^k)_{k
\geq1}$ by
%
%
\begin{equation}
G^k(y) = \sum_{i=1}^{k}
\bigl(\mathbf{1}_{ \{ \eta_i \leq y \}
}-F(y) \bigr),\qquad y \geq0,
\end{equation}
and, for convenience, let $G^k(y)=0$ for $y < 0$. Then let $(\mathcal
{G}^k)_{k \geq1}$ be the
$\mathcal{S}'$-valued process associated with $G^k$. Since $\varphi
\in\mathcal{S}$ is bounded, it is clear that $E[\llvert\langle
\mathcal{G}^k, \varphi\rangle\rrvert ] < \infty$ for each $k \geq
1$. Moreover, by the independence of the service times from the arrival
process, one has that $(\langle
\mathcal{G}^k, \varphi\rangle)_{k \geq1}$ possesses the martingale
property with respect to
$(\mathcal{H}_k)_{k \geq1} $. Hence, $(\langle
\mathcal{G}^k, \varphi\rangle)_{k \geq1}$
is an $\mathbb{R}$-valued
$\mathcal{H}_k$-martingale. However, since for each $t \geq0$ we
have by the assumptions in Section~\ref{SubsecAgeEquations} that
$\mathbb E[E_t] < \infty$, it is straightforward to see that $E_t$ is
a stopping time with respect to the filtration
$(\mathcal{H}_k)_{k \geq1}$. Thus, the filtration
$(\mathcal{H}_{E_t})_{t \geq0}$ is well defined and, furthermore,
it follows by the optional sampling theorem \cite{KaratzasShreve} that
$(\langle
\mathcal{G}_t, \varphi\rangle)_{t \geq0}=(\langle
\mathcal{G}^{E_t}, \varphi\rangle)_{t \geq0}$ is an
\mbox{$\mathcal{H}_{E_t}$-}martingale. The result now follows since any
martingale is a martingale relative to its natural filtration.

The form of the tensor optional quadratic variation (\ref{GOQV}) is
immediate by Theorem 3.3 of \cite{PTW}.
\end{pf}

\section{Fluid limits}\label{SecFluidLimits}

In this section, we provide our main fluid limit results. We begin in
Section~\ref{SubsecAgesFluid} by studying the age process and in
Section~\ref{SubsecResidualsFluid} we study the residual service time process.
Our setup in both subsections is the same. In particular, we consider a
sequence of $G/\mathit{GI}/\infty$ queues indexed by $n \geq1$, where the
arrival rate to the system grows large with $n$ while the service time
distribution does not change with $n$.

\subsection{Ages}\label{SubsecAgesFluid}

We begin by studying the age process $\mathcal{A}$ defined in
Section~\ref{SubsecAgeEquations}. For each $n \geq1$, define the
fluid scaled quantities
%
%
\begin{eqnarray}\label{AgeFluidDef}
\bar{\mathcal{A}}^n_0 &\equiv&
\frac{\mathcal{A}^n_0}{n},\qquad\bar{E}^n \equiv\frac{E^n}{n},\qquad
\bar{\mathcal{D}}^{0,n} \equiv\frac{\mathcal{D}^{0,n}}{n},
\nonumber\\[-8pt]\\[-8pt]
\bar{\mathcal{D}}^n &\equiv&\frac
{\mathcal{D}^n}{n},
\qquad
\bar{\mathcal{A}}^n \equiv\frac{\mathcal{A}^n}{n},\nonumber
\end{eqnarray}
and set $\bar{\mathcal{E}}^n \equiv\bar{E}^n \delta_0$. Using
(\ref{AgesSystemEquation}), Theorem~\ref{ThmRegulatorMap} and
Proposition~\ref{PropAgeC01}, it is straightforward to show that one
may write
%
%
\begin{eqnarray}
\bar{\mathcal{A}}^n &=& \Psi_{B^{\mathcal{A}}}\bigl( \bar{\mathcal
{A}}_0^n+ \bar{\mathcal{E}}^n-\bigl(\bar{
\mathcal{D}}^{0,n}+\bar{\mathcal{D}}^n\bigr)\bigr),
\label{inreg}
\end{eqnarray}
where\vspace*{1pt} the map $\Psi_{B^{\mathcal{A}}}\dvtx  \mathbb{D}([0,T],\mathcal
{S}') \mapsto\mathbb{D}([0,T],\mathcal{S}')$ is continuous. We now
prove that if $( \bar{\mathcal{A}}^n_0+ \bar{\mathcal{E}}^n
)_{n \geq1}$ weakly converges, then so too does $( \bar{\mathcal
{A}}^n_0+ \bar{\mathcal{E}}^n
-(\bar{\mathcal{D}}^{0,n} +
\bar{\mathcal{D}}^n))_{n \geq1}$.

\begin{prop}\label{Dfluidlimit}
If $\bar{\mathcal{A}}^n_0 + \bar{\mathcal{E}}^n \Rightarrow
\bar{\mathcal{A}}_0 +\bar{\mathcal{E}} $ in $ \mathbb
{D}([0,T],\mathcal{S}')$ as $n
\rightarrow\infty$, then
\begin{eqnarray*}
\label{agefluidlimitprop} \bar{\mathcal{A}}^n_0 + \bar{
\mathcal{E}}^n -\bigl( \bar{\mathcal{D}}^{0,n} + \bar{
\mathcal{D}}^n\bigr) &\Rightarrow& \bar{\mathcal{A}}_0 +
\bar{\mathcal{E}} \qquad\mbox{in } \mathbb{D}\bigl([0,T],\mathcal{S}'
\bigr) \mbox{ as }n \rightarrow\infty.
\end{eqnarray*}
\end{prop}

\begin{pf}
We first note that by Theorem~\ref{Mitoma}, it is sufficient to show
that if $\bar{\mathcal{A}}^n_0 + \bar{\mathcal{E}}^n \Rightarrow
\bar{\mathcal{A}}_0 +\bar{\mathcal{E}} $ as $n \rightarrow\infty
$, then
%
%
\begin{equation}
\label{D0Dfluidlimit} \bar{\mathcal{D}}^{0,n} + \bar{\mathcal{D}}^n
\Rightarrow0 \qquad\mbox{in } \mathbb{D}\bigl([0,T],\mathcal{S}'
\bigr) \mbox{ as }n \rightarrow\infty.
\end{equation}
Let $T > 0$ and $0 \leq t \leq T$. Then, for each $\varphi\in\mathcal
{S}$, we have by Proposition~\ref{PropMartingales} that
%
%
\begin{eqnarray}
&&\bigl\llvert\lll\bar{\mathcal{D}}^{0,n} + \bar{\mathcal{D}}^n
\rrr_t (\varphi,\varphi)\bigr\rrvert
\nonumber
\\
&&\qquad =\Biggl\llvert\frac{1}{n^2} \Biggl( \sum_{i=1}^{A^n_0(\infty)}
\int_0^{ \tilde{\eta}_i \wedge t } \varphi^2\bigl( x -
\tilde{\tau}^n_i \bigr) h_{\tilde{\tau}^n_i}(x) \,dx
\nonumber\\[-8pt]\\[-8pt]\nonumber
&&\hspace*{60pt}{}+ \sum_{i=1}^{E^n_t}
\int_0^{ \eta_i
\wedge(t-\tau^n_i)^+ } \varphi^2( x )h(x) \,dx
\Biggr) \Biggr\rrvert
\nonumber
\\
&&\qquad \leq \frac{\llVert h \rrVert _{\infty} }{n^2} \sum
_{i=1}^{A^n_0(\infty)}
\int_0^{ t } \varphi^2\bigl( x -
\tilde{\tau}^n_i \bigr) \,dx +\frac{\llVert \varphi^2 h \rrVert
_{\infty}}{n}
\bar{E}^n_T. \label{D0DQVfluidlimit}
\end{eqnarray}
Thus, from (\ref{D0DQVfluidlimit}) we obtain that for each $0 \leq t
\leq T$,
%
%
\begin{eqnarray}\label{D0DQVfluidlimit23}
&& \bigl\llvert\lll\bar{\mathcal{D}}^{0,n} + \bar{\mathcal{D}}^n
\rrr_t (\varphi,\varphi)\bigr\rrvert\nonumber
\\
&&\qquad \leq\frac{\llVert h \rrVert _{\infty} }{n^2} \sum_{i=1}^{A^n_0(\infty)}
\int_0^{ t } \varphi^2\bigl( x -
\tilde{\tau}^n _i \bigr) \,dx +\frac{\llVert \varphi^2 h \rrVert
_{\infty}}{n} \bar
{E}^n_T\nonumber
\\
&&\qquad = \frac{\llVert h \rrVert _{\infty} }{n^2} \int_0^{ t } \sum
_{i=1}^{A^n_0(\infty)} \varphi^2\bigl( x - \tilde{
\tau}^n_i \bigr) \,dx +\frac{\llVert \varphi^2 h \rrVert _{\infty}}{n}
\bar{E}^n_T
\\
&&\qquad = \frac{\llVert h \rrVert _{\infty} }{n} \int_0^{ t } \bigl\langle
\bar{\mathcal{A}}^n_0, \tau_{-x}
\varphi^2 \bigr\rangle \,dx +\frac
{\llVert \varphi^2 h \rrVert _{\infty}}{n} \bar{E}^n_T
\nonumber
\\
&&\qquad \leq\frac{\llVert h \rrVert _{\infty} t}{n} q_{K}\bigl( \bar
{\mathcal{A}}^n_0
\bigr) +\frac{\llVert \varphi^2 h \rrVert _{\infty
}}{n} \bar{E}^n_T,
\nonumber
\end{eqnarray}
where the set $K$ is given by $K=\{\tau_{-x}\varphi^2, 0 \leq x \leq
t\}$. By Lemma~\ref{shiftboundlemma}, the set $K$ is bounded in
$\mathcal{S,}$ and hence $q_{K}$ is a seminorm on $\mathcal{S}'$
by Definition~\ref{defstrongdual}. This then implies that $q_K$ is a
continuous function on $\mathcal{S}'$. Hence, since by assumption
\mbox{$\bar{\mathcal{A}}_0^n \Rightarrow\bar{\mathcal{A}}_0$} and $\bar
{E}^n_T \Rightarrow\bar{E}_T$, it follows by Slutsky's theorem that
\begin{eqnarray*}
\frac{\llVert h \rrVert _{\infty} t}{n} q_{K}\bigl( \bar{\mathcal{A}}^n_0
\bigr) +\frac{\llVert \varphi^2 h \rrVert _{\infty}}{n} \bar{E}^n_T
&\Rightarrow&0
\qquad\mbox{as }n \rightarrow\infty.
\end{eqnarray*}
By (\ref{D0DQVfluidlimit23}), this then implies that
%
%
\begin{eqnarray}
\lll\bar{\mathcal{D}}^{0,n} + \bar{\mathcal{D}}^n \rrr(
\varphi,\varphi) &\Rightarrow&0\qquad\mbox{in }\mathbb{D}\bigl([0,T],
\mathbb R
\bigr)\mbox{ as }n \rightarrow\infty. \label{convquad}
\end{eqnarray}

We now verify that parts (1) and (2) of Theorem~\ref{Mitoma} are satisfied
for the sequence $(\bar{\mathcal{D}}^{0,n} + \bar{\mathcal
{D}}^n)_{n \geq1}$, with the limit point being the function which is
identically $0$. We begin with part~(1). Using\vspace*{1pt} the fact that
the maximum jump of both $\langle\bar{\mathcal{D}}^{0,n} + \bar{\mathcal
{D}}^n, \varphi\rangle$ and $\ll \bar{\mathcal{D}}^{0,n} +
\bar{\mathcal{D}}^n \gg(\varphi,\varphi)$ is bounded over the
interval $[0,T]$ uniformly in $n$, we
obtain by (\ref{convquad}) and the martingale FCLT (see Theorem 7.1.4
of \cite{EK86} or \cite{WhittMGFCLT}) that
%
%
\begin{equation}
\bigl\langle\bar{\mathcal{D}}^{0,n} + \bar{\mathcal{D}}^n,
\varphi\bigr\rangle\Rightarrow0 \qquad\mbox{in }\mathbb{D}\bigl
([0,T],\mathbb R
\bigr) \mbox{ as }n \rightarrow\infty. \label{cond1convergence}
\end{equation}
Thus, part~(1) of Theorem~\ref{Mitoma} holds. We next check that
condition~(2) holds. Let $m \geq1$
and let $t_1,\ldots,t_m \in[0,T]$ and $\varphi_1, \ldots, \varphi
_m \in\mathcal{S}$. By (\ref{cond1convergence}), we have that for
each $1 \leq i \leq m$,
\[
\bigl\langle\bigl(\bar{\mathcal{D}}^{0,n} + \bar{\mathcal{D}}^n
\bigr)_{t_i}, \varphi_i \bigr\rangle\Rightarrow0 \qquad
\mbox{in }\mathbb{R} \mbox{ as }n \rightarrow\infty.
\]
However, by Theorem 3.9 of \cite{Billingsley99} this now implies that
\[
\bigl(\bigl\langle\bigl(\bar{\mathcal{D}}^{0,n} + \bar{\mathcal
{D}}^n\bigr)_{t_1}, \varphi_1 \bigr\rangle,\ldots,\bigl\langle\bigl(\bar{\mathcal{D}}^{0,n} + \bar{
\mathcal{D}}^n\bigr)_{t_m}, \varphi_m \bigr
\rangle\bigr) \Rightarrow(0,\ldots,0) \qquad\mbox{in }\mathbb{R}^m,
\]
as $ n \rightarrow\infty$. Thus, we have shown that part~(2) of Theorem~\ref{Mitoma} holds and so (\ref{D0Dfluidlimit})
is proven. This completes the proof.
\end{pf}

We are now in a position to prove the main result of this subsection.
We have the following.

\begin{teo}\label{AgeFluidLimitTheorem}
If $\bar{\mathcal{A}}^n_0 + \bar{\mathcal{E}}^n \Rightarrow
\bar{\mathcal{A}}_0 +\bar{\mathcal{E}} $ in $ \mathbb
{D}([0,T],\mathcal{S}')$ as $n
\rightarrow\infty$, then
\[
\bar{\mathcal{A}}^n \Rightarrow\bar{\mathcal{A}} \qquad\mbox{in }
\mathbb{D}\bigl([0,T],\mathcal{S}'\bigr) \mbox{ as }n
\rightarrow\infty,
\]
where $\bar{\mathcal{A}}$ is the unique solution to the integral
equation
%
%
\begin{eqnarray}\label{AgeFluidLimit}
\langle\bar{ \mathcal{A} }_t, \varphi\rangle=
\langle\bar{ \mathcal{A} }_0, \varphi\rangle+ \langle\bar{
\mathcal{E}}_t,\varphi\rangle- \int_0^t
\langle\bar{ \mathcal{A} }_s, h \varphi\rangle \,ds + \int
_0^t \bigl\langle\bar{ \mathcal{A}
}_s, \varphi' \bigr\rangle \,ds,
\nonumber\\[-8pt]\\[-12pt]
\eqntext{t \in[0,T],}
\end{eqnarray}
for all $\varphi\in\mathcal{S}$.
\end{teo}

\begin{pf}
By the assumption that $\bar{\mathcal{A}}^n_0 + \bar{\mathcal{E}}^n
\Rightarrow
\bar{\mathcal{A}}_0 +\bar{\mathcal{E}} $, it follows immediately by
Proposition~\ref{Dfluidlimit} that
%
%
\begin{equation}\label{usemapshow}
\qquad\bar{\mathcal{A}}_0^n + \bar{\mathcal{E}}^n
-\bigl( \bar{\mathcal{D}}^{0,n}+\bar{\mathcal{D}}^n\bigr)
\Rightarrow\bar{\mathcal{A}}_0 + \bar{\mathcal{E}} \qquad\mbox{in }
\mathbb{D}\bigl([0,T],\mathcal{S}'\bigr) \mbox{ as }n
\rightarrow\infty.
\end{equation}
Next, recall that by (\ref{inreg}) we have that
$\bar{\mathcal{A}}^n=\Psi_{B^{\mathcal{A}}}(\bar{\mathcal{A}}^n_0+
\bar{\mathcal{E}}^n - (\bar{\mathcal{D}}^{0,n} + \bar{\mathcal
{D}}^n))$, where the map $\Psi_{B^{\mathcal{A}}}\dvtx  \mathbb
{D}([0,T],\mathcal{S}') \mapsto\mathbb{D}([0,T],\mathcal{S}')$ is
continuous. The result
now follows by (\ref{usemapshow}) and Proposition~\ref{cmtprop}
applied to $\Psi_{B^\mathcal{A}}$.
\end{pf}

\begin{rem}
Note that one may now use Theorem {\ref{AgeFluidLimitTheorem}} along
with Theorem {\ref{ThmRegulatorMap}} in order to obtain an
explicit expression
for $\bar{\mathcal{A}}$. Similarly, one may obtain explicit expressions
for $\bar{\mathcal{R}}$, $\hat{\mathcal{A}}$ and $\hat{\mathcal{R}}$
in Theorems {\ref{ResidualFluidLimitThm}},
{\ref{AgeDiffusionLimitTheorem}} and
{\ref{ResidualDiffusionLimitTheorem}}, respectively, below.
\end{rem}

We also note that at a heuristic level, one may attempt to substitute
the function $\mathbf{1}_{ \{ x \geq0 \} }$ into the explicit formula
provided by Theorem~\ref{ThmRegulatorMap} for $\bar{\mathcal{A}}$ in
order to obtain an expression for the limiting, fluid scaled total
number of customers in the system. For instance, suppose that $\bar
{\mathcal{A}}_0=0$ so that the system is initially empty and that
$\langle\bar{\mathcal{E}}, \varphi\rangle= \lambda\varphi(0) e
$ for each $\varphi\in\mathcal{S}$. Then, using the form of the
generator $B^{\mathcal{A}}$ from (\ref{defBA}) and the semigroup
$(S^{\mathcal{A}}_t)_{t \geq0}$ from Proposition~\ref{PropAgeC01}, one
obtains after substituting into Theorem~\ref{ThmRegulatorMap} that
heuristically the total number of customers in the system at time $t
\geq0$ is given by
\begin{eqnarray*}
\lambda t - \lambda\int_0^t s f(t-s)\,ds &=&
\lambda\int_0^t \bar{F}(t-s)\,ds.
\end{eqnarray*}

We now conclude this subsection by providing an additional condition on
the arrival process under which a stationary solution to
the fluid limit equation (\ref{AgeFluidLimit}) may be explicitly
found. Note also that our condition in Proposition~\ref
{StationaryFluid} below holds, for example, if the arrival process
to the $n$th system is a renewal process which has been sped up by a
factor of $n$ (as will be the case for the $\mathit{GI}/\mathit{GI}/\infty$ queue).

%
\begin{prop}\label{StationaryFluid}
If $\langle\bar{\mathcal{E}}, \varphi\rangle= \lambda\varphi
(0) e $ for each $\varphi\in\mathcal{S}$, then $\bar{\mathcal{A}} =
\lambda\mathcal{F}_e$ is a stationary solution to the fluid
limit equation (\ref{AgeFluidLimit}).
\end{prop}

\begin{pf}
Substituting $\bar{\mathcal{A}} = \lambda\mathcal{F}_e$ and
$\langle\bar{\mathcal{E}}, \varphi\rangle= \lambda\varphi(0) e
$ into (\ref{AgeFluidLimit}), we see that it
suffices to verify that
\begin{eqnarray*}
&&\lambda\int_{\mathbb R_+} \varphi(y) \,dF_e(y)
\\
&&\qquad = \lambda\int_{\mathbb R_+} \varphi(y) \,dF_e(y) +
\lambda t \varphi(0) - \lambda t \int_{\mathbb R_+} \bigl(h(y)
\varphi(y) - \varphi'(y)\bigr) \,dF_e(y).
\end{eqnarray*}
However, this follows since
\begin{eqnarray*}
&& \lambda t \int_{\mathbb R_+} \bigl(h(y) \varphi(y) -
\varphi'(y)\bigr) \,dF_e(y)
\\
&&\qquad = \lambda t \int
_{\mathbb R_+} \bigl( h(y) \varphi(y) - \varphi'(y)
\bigr) \bar{F}(y) \,dy
\\
&&\qquad = \lambda t \int_{\mathbb R_+} \bigl(f(y) \varphi(y) - \bar{F}(y)
\varphi'(y)\bigr) \,dy
\\
&&\qquad = - \lambda t \int_{\mathbb R_+} \bigl(\bar{F}(y) \varphi(y)
\bigr)' \,dy
\\
&&\qquad = \lambda t \varphi(0).
\end{eqnarray*}
This completes the proof.
\end{pf}

\subsection{Residuals}\label{SubsecResidualsFluid}

We next proceed to analyze the residual service time process~$\mathcal
{R}$ of
Section~\ref{SubsecResidualEquations}. Our setup in this subsection is
the same as that in the previous subsection. However, in addition to
the fluid scaled quantities already defined in Section~\ref{SubsecAgesFluid}, we also now define for each $n \geq1$ the new fluid
scaled quantities
\[
\bar{\mathcal{R}}^n \equiv\frac{\mathcal{R}^n}{n}, \qquad\bar{
\mathcal{R}}_0^n \equiv\frac{\mathcal{R}_0^n}{n}\quad\mbox{and}
\quad\bar{\mathcal{G}} \equiv\frac{\mathcal{G}^n}{n}.
\]
Using (\ref{ResidualsSystemEquation}), Theorem
\ref{ThmRegulatorMap} and Proposition~\ref{PropResidualC01}, it is
now straightforward to show that
%
%
\begin{eqnarray}
\bar{ \mathcal{R}}^n &=& \Psi_{B^{\mathcal{R}}}\bigl( \bar{\mathcal
{R}}^n_0 + \bar{E}^n \mathcal{F}+ \bar{
\mathcal{G}}^n \bigr), \label{showresids}
\end{eqnarray}
where\vspace*{1pt} the map $\Psi_{B^{\mathcal{R}}}\dvtx \mathbb{D}([0,T],\mathcal
{S}') \mapsto\mathbb{D}([0,T],\mathcal{S}')$ is continuous. In our
first result of this subsection, we prove that if $(\bar{\mathcal
{R}}^n_0+ \bar{E}^n \mathcal{F})_{n \geq1}$ weakly converges, then
so too does $(\bar{\mathcal{R}}^n_0+ \bar{E}^n \mathcal{F} +\bar
{\mathcal{G}}^n )_{n \geq1}$.

\begin{prop}\label{Gfluidlimit}
If $ \bar{\mathcal{R}}^n_0 + \bar{E}^n \mathcal{F} \Rightarrow
\bar{\mathcal{R}}_0 + \bar{E}\mathcal{F} $ in $ \mathbb
{D}([0,T],\mathcal{S}')$ as $n
\rightarrow\infty$, then
%
%
\begin{equation}
\label{ResidualsJoint} \bar{\mathcal{R}}^n_0 +
\bar{E}^n \mathcal{F} + \bar{\mathcal{G}}^n \Rightarrow
\bar{\mathcal{R}}_0 + \bar{E} \mathcal{F} \qquad\mbox{in }\mathbb{D}
\bigl([0,T],\mathcal{S}'\bigr) \mbox{ as }n \rightarrow\infty.
\end{equation}
\end{prop}

\begin{pf}We first note that by Theorem~\ref{Mitoma}, it is sufficient
to show that if $ \bar{\mathcal{R}}^n_0 + \bar{E}^n \mathcal{F}
\Rightarrow
\bar{\mathcal{R}}_0 + \bar{E}\mathcal{F} $ as $n \rightarrow\infty
$, then
%
%
\begin{eqnarray}
\bar{\mathcal{G}}^n &\Rightarrow& 0 \qquad\mbox{in }\mathbb{D}
\bigl([0,T],\mathcal{S}'\bigr) \mbox{ as }n \rightarrow\infty.
\end{eqnarray}
Let\vspace*{1pt} $T > 0$ and $0 \leq t \leq T$. We then have by Proposition~\ref
{PropMartingalesTwo} and the assumption that $ \bar{\mathcal{R}}^n_0
+ \bar{E}^n \mathcal{F} \Rightarrow
\bar{\mathcal{R}}_0 + \bar{E}\mathcal{F} $, that for each $\varphi, \psi
\in\mathcal{S}$,
%
%
\begin{eqnarray}\label{eq4}
\bigl\llvert\bigl[ \bar{\mathcal{G}}^n \bigr]_t(
\varphi, \psi)\bigr\rrvert&=& \Biggl\llvert\frac{1}{n^2} \sum
_{i=1}^{E^n_t} \varphi(\eta_i)\psi(
\eta_i) \Biggr\rrvert
\nonumber\\[-8pt]\\[-8pt]
&\leq& \frac{1}{n^2} E^n_T \sup
_{0 \leq s < \infty}\bigl\llvert\varphi(s)\psi(s) \bigr\rrvert
\Rightarrow0 \qquad\mbox{in }\mathbb R\mbox{ as }n \rightarrow
\infty.\nonumber
\end{eqnarray}
The remainder of the proof now proceeds in a similar manner to the
proof of Proposition~\ref{Dfluidlimit}. We omit the details.
\end{pf}

The following is now our main result of this subsection.

\begin{teo}\label{ResidualFluidLimitThm}
If $ \bar{\mathcal{R}}^n_0 + \bar{E}^n \mathcal{F} \Rightarrow
\bar{\mathcal{R}}_0 + \bar{E}\mathcal{F} $ in $ \mathbb
{D}([0,T],\mathcal{S}')$ as $n
\rightarrow\infty$, then
\[
\bar{\mathcal{R}}^n \Rightarrow\bar{\mathcal{R}} \qquad\mbox{in }
\mathbb{D}\bigl([0,T],\mathcal{S}'\bigr) \mbox{ as }n
\rightarrow\infty,
\]
where $\bar{\mathcal{R}}$ is the unique solution to the integral
equation
%
%
\begin{equation}
\label{ResidualFluidLimit} \langle\bar{ \mathcal{R} }_t, \varphi\rangle=
\langle\bar{ \mathcal{R} }_0, \varphi\rangle+ \bar{E}_t
\langle\mathcal{F}, \varphi\rangle- \int_0^t
\bigl\langle\bar{ \mathcal{R} }_s, \varphi' \bigr
\rangle \,ds,\qquad t \in[0,T],
\end{equation}
for all $\varphi\in\mathcal{S}$.
\end{teo}

\begin{pf}
By the assumption that $\bar{\mathcal{R}}^n_0 + \bar{E}^n \mathcal
{F} \Rightarrow
\bar{\mathcal{R}}_0 + \bar{E}\mathcal{F} $, it follows immediately
by Proposition~\ref{Gfluidlimit} that
%
%
\begin{equation}
\label{ResidualsJointAgain} \bar{\mathcal{R}}^n_0 +
\bar{E}^n \mathcal{F} + \bar{\mathcal{G}}^n \Rightarrow
\bar{\mathcal{R}}_0 + \bar{E} \mathcal{F} \qquad\mbox{in }\mathbb{D}
\bigl([0,T],\mathcal{S}'\bigr) \mbox{ as }n \rightarrow\infty.
\end{equation}
Next, recall that by (\ref{showresids}) we have that
$\bar{\mathcal{R}}^n=\Psi_{B^{\mathcal{R}}}(\bar{\mathcal{R}}^n_0+
\bar{E}^n \mathcal{F}+ \bar{\mathcal{G}}^n)$, where the map
$\Psi_{B^{\mathcal{R}}}\dvtx \mathbb{D}([0,T],\mathcal{S}') \mapsto
\mathbb{D}([0,T],\mathcal{S}')$ is continuous. The result now follows
by (\ref{ResidualsJointAgain})
and Proposition~\ref{cmtprop} applied to
$\Psi_{B^{\mathcal{R}}}$.
\end{pf}

\section{Diffusion limits}\label{SecDiffusionLimits}

In this section, we prove our main diffusion limit results. In
Section~\ref{SubsecAgesDiffusion}, we
study the age process and in Section~\ref{SubsecResidualsDiffusion} we
study the residual service time process.
Before we provide our main results, however, we first
must provide the definition of an
$\mathcal{S}'$-valued Wiener process and a generalized $\mathcal{S}'$-valued
Ornstein--Uhlenbeck process. Our definitions are the same as those in
\cite{Langevin}.

%
\begin{definition}
A continuous $\mathcal{S}'$-valued Gaussian process $W = (W_t)_{t
\geq0}$ is called a \emph{generalized $\mathcal{S}'$-valued Wiener
process} with \emph{covariance functional}
\[
K(s,\varphi; t, \psi) = \mathbb E \bigl[\langle W_s, \varphi
\rangle\langle W_t, \psi\rangle\bigr],\qquad s,t \geq0\mbox{ and }
\varphi, \psi\in\mathcal{S},
\]
if it has continuous trajectories and, for each
$s,t \geq0$ and $\varphi, \psi\in\mathcal{S}$, $K(s, \varphi; t,
\psi)$
is of the form
\[
K(s,\varphi; t, \psi) = \int_0^{s \wedge t} \langle
Q_u \varphi, \psi\rangle \,du,
\]
where the operators $Q_u\dvtx  \mathcal{S}\rightarrow\mathcal{S}'$, $u
\geq0$, possess
the following two properties:
\begin{longlist}[(2)]
\item[(1)]$Q_u$ is linear, continuous, symmetric and positive for each $u
\geq0$,
\item[(2)] the function $u \mapsto\langle Q_u \varphi, \psi\rangle$ is in
$\mathbb{D}([0,\infty),\mathbb{R})$ for each $\varphi, \psi\in
\mathcal{S}$.
\end{longlist}
If $Q_u$ does not depend on $u \geq0$, then the process $W$ is called an
\emph{$\mathcal{S}'$-valued Wiener process}.
\end{definition}

Now, using the above definition of a generalized $\mathcal{S}'$-valued Wiener
process, we may provide the following definition of a generalized
$\mathcal{S}'$-valued Ornstein--Uhlenbeck process.

%
\begin{definition}
\label{defou}
An $\mathcal{S}'$-valued process $X = (X_t)_{t \geq0}$ is called a
(\emph{generalized}) \emph{$\mathcal{S}'$-valued Ornstein--Uhlenbeck process}
if for each $\varphi\in\mathcal{S}$ and $t \geq0$,
\[
\langle X_t, \varphi\rangle= \langle X_0, \varphi
\rangle+ \int_0^t \langle X_u, A
\varphi\rangle \,du + \langle W_t, \varphi\rangle,
\]
where $W \equiv(W_t)_{t \geq0}$ is a (generalized) $\mathcal{S}'$-valued
Wiener process and $A\dvtx \mathcal{S} \rightarrow\mathcal{S}$ is a
continuous operator.
\end{definition}

\subsection{Ages}\label{SubsecAgesDiffusion}
In this subsection, we prove our main diffusion limit result for the
age process $\mathcal{A}$ defined in Section~\ref{SubsecAgeEquations}. Our setup is the same as that in
Section~\ref{SecFluidLimits}. That is, we consider a sequence of $G/\mathit{GI}/\infty$
queues indexed by $n \geq1$, where the arrival rate to the system
grows large with $n$ while the service time distribution does not
change with $n$. For the remainder of this subsection, we assume that
$\bar{\mathcal{A}}^n_0 + \bar{\mathcal{E}}^n \Rightarrow\bar
{\mathcal{A}}_0 + \bar{\mathcal{E}}$ as $n \rightarrow\infty$,
where $\bar{\mathcal{A}}_0 + \bar{\mathcal{E}}$ is a nonrandom
quantity. By Theorem~\ref{AgeFluidLimitTheorem} of Section~\ref{SubsecAgesFluid}, this then implies that $\bar{\mathcal{A}}$ is a
nonrandom quantity as well. Setting $\bar{A}^n_0(\infty
)=n^{-1}A^n_0(\infty)$ for each $n \geq1$ and letting $T \geq0$, we
also assume
that the sequences $\{\bar{A}^n_0(\infty), n \geq1\}$ and $\{\bar
{E}^n_T, n \geq1\}$ are uniformly integrable.

Now, for each $n \geq1$, define the diffusion scaled quantities
\begin{eqnarray*}
\hat{\mathcal{A}}^n &\equiv&\sqrt{n} \bigl(\bar{\mathcal{A}}^n
- \bar{\mathcal{A}} \bigr), \qquad\hat{\mathcal{A}}_0^n
\equiv\sqrt{n} \bigl(\bar{\mathcal{A}}_0^n - \bar{
\mathcal{A}}_0 \bigr), \qquad\hat{E}^n \equiv\sqrt{n}
\bigl(\bar{E}^n - \bar{E} \bigr),
\\
\hat{\mathcal{D}}^{0,n} &\equiv&\sqrt{n} \bar{\mathcal{D}}^{0,n},
\qquad\hat{\mathcal{D}}^n \equiv\sqrt{n} \bar{\mathcal{D}}^n,
\end{eqnarray*}
and set $\hat{\mathcal{E}}^n \equiv\hat{E}^n \delta_0$. Then,
recalling the form of the fluid limit $\bar{\mathcal{A}}$ from
Theorem~\ref{AgeFluidLimitTheorem},
note that using system equation (\ref{AgesSystemEquation}) in
conjunction with Theorem~\ref{ThmRegulatorMap} and Proposition~\ref
{PropAgeC01}, one has that for each $n \geq1$,
%
%
\begin{equation}
\hat{\mathcal{A}}^n = \Psi_{B^{\mathcal{A}}}\bigl( \hat{\mathcal
{A}}^n_0 + \hat{\mathcal{E}}^n - \bigl(\hat{
\mathcal{D}}^{0,n} + \hat{\mathcal{D}}^n\bigr) \bigr),
\label{psidifage}
\end{equation}
where the map $\Psi_{B^{\mathcal{A}}}\dvtx  \mathbb{D}([0,T],\mathcal
{S}') \mapsto\mathbb{D}([0,T],\mathcal{S}')$ is continuous. Our
strategy now is to first prove a weak convergence result for the
sequence $( \hat{\mathcal{A}}^n_0 + \hat{\mathcal{E}}^n - (\hat
{\mathcal{D}}^{0,n} + \hat{\mathcal{D}}^n) )_{n \geq1}$
and then to apply Theorem~\ref{cmtprop} together with (\ref
{psidifage}) in order to prove our diffusion limit
result for the sequence $(\hat{\mathcal{A}}^n)_{n \geq1}$.

We begin with the following result. Its proof may be found in the \hyperref[appen]{Appendix}.

\begin{lem}\label{LemDcheck}
If $ \hat{\mathcal{A}}^n_0 + \hat{\mathcal{E}}^n
\Rightarrow\hat{\mathcal{A}}_0 + \hat{\mathcal{E}} $ in $\mathbb
{D}([0,T],\mathcal{S}') $
as $n \rightarrow\infty$, then
%
%
\begin{equation}
\label{DcheckDTwo} \hat{\mathcal{D}}^{0,n} + \hat{\mathcal{D}}^n
\Rightarrow\hat{\mathcal{D}}^0 + \hat{\mathcal{D}} \qquad\mbox{in }
\mathbb{D}\bigl([0,T],\mathcal{S}'\bigr) \mbox{ as }n
\rightarrow\infty,
\end{equation}
where $\hat{\mathcal{D}}^0 + \hat{\mathcal{D}}$ is a generalized
$\mathcal{S}'$-valued Wiener process with covariance functional given
for each $\varphi, \psi\in\mathcal{S}$ and $s,t \geq0$ by
%
%
\begin{equation}
\label{Dcovariance} K_{\hat{\mathcal{D}}^0 + \hat{\mathcal
{D}}}(s,\varphi; t, \psi) = \int
_0^{s \wedge t} \langle\bar{\mathcal{A}}_u,
\varphi\psi h \rangle \,du.
\end{equation}
\end{lem}

\begin{pf} See the \hyperref[appen]{Appendix}.
\end{pf}

We next have the following result, which provides a weak limit for the
sequence $( \hat{\mathcal{A}}^n_0+ \hat{\mathcal{E}}^n - (\hat
{\mathcal{D}}^{0,n} + \hat{\mathcal{D}}^n) )_{n \geq1}$.

%
\begin{prop}\label{Ddiffusionlimit}
If $ \hat{\mathcal{A}}^n_0 + \hat{\mathcal{E}}^n
\Rightarrow\hat{\mathcal{A}}_0 + \hat{\mathcal{E}} $ in $ \mathbb
{D}([0,T],\mathcal{S}') $
as $n \rightarrow\infty$, then
%
%
\begin{eqnarray}\label{jointdiffusion}
\hat{\mathcal{A}}^n_0 + \hat{
\mathcal{E}}^n -\bigl( \hat{\mathcal{D}}^{0,n} + \hat{
\mathcal{D}}^n\bigr) \Rightarrow\hat{\mathcal{A}}_0 +
\hat{\mathcal{E}} -\bigl( \hat{\mathcal{D}}^0 + \hat{\mathcal{D}}
\bigr)
\nonumber\\[-8pt]\\[-8pt]
\eqntext{\mbox{in } \mathbb{D}\bigl([0,T],\mathcal{S}'\bigr)
\mbox{ as }n \rightarrow\infty,}
\end{eqnarray}
where $\hat{\mathcal{D}}^0 + \hat{\mathcal{D}}$ is as given in
Lemma~\ref{LemDcheck} and is independent of $ \hat{\mathcal{A}}_0 +
\hat{\mathcal{E}}$.
\end{prop}

\begin{pf}We will check that the sequence $( \hat{\mathcal{A}}^n_0 +
\hat{\mathcal{E}}^n -( \hat{\mathcal{D}}^{0,n} + \hat{\mathcal
{D}}^n) )_{n \geq1}$ satisfies parts~(1) and~(2) of Theorem~\ref{Mitoma}.
We begin with part~(1). Let $\varphi\in\mathcal{S}$. By assumption,
the sequence $ (
\langle\hat{\mathcal{A}}^n_0 + \hat{\mathcal{E}}^n, \varphi
\rangle)_{n \geq1}$ weakly converges and hence is tight in $ \mathbb
{D}([0,T],\mathbb{R}) $, and, by Lemma~\ref{LemDcheck}, the sequence
$(\langle(\hat{\mathcal{D}}^{0,n} + \hat{\mathcal{D}}^n), \varphi
\rangle)_{n \geq1}$ weakly converges, and hence is tight in $ \mathbb
{D}([0,T],\mathbb{R}) $. Therefore, there must exist a subsequence
$(n)_{n \geq1}$ along which we have the joint convergence
\begin{eqnarray*}
\bigl( \bigl\langle\hat{\mathcal{A}}^n_0 + \hat{
\mathcal{E}}^n, \varphi\bigr\rangle, \bigl\langle\bigl(\hat{
\mathcal{D}}^{0,n} + \hat{\mathcal{D}}^n\bigr), \varphi\bigr
\rangle\bigr) &\Rightarrow& \bigl( \langle\check{\mathcal{A}}_0 +
\check{\mathcal{E}}, \varphi\rangle, \bigl\langle\bigl(\check{ \mathcal
{D}}^{0} + \check{\mathcal{D}}\bigr), \varphi\bigr\rangle\bigr)
\end{eqnarray*}
in $\mathbb{D}^2([0,T],\mathbb{R})$ as $n \rightarrow
\infty$. Now note that clearly $\langle\check{\mathcal{A}}_0 +
\check{\mathcal{E}}, \varphi\rangle$ has the same distribution as
$\langle\hat{\mathcal{A}}_0 + \hat{\mathcal{E}}, \varphi\rangle
$ and, similarly, $\langle(\check{ \mathcal{D}}^{0} + \check
{\mathcal{D}}), \varphi\rangle$ has the same distribution as
$\langle(\hat{ \mathcal{D}}^{0} + \hat{\mathcal{D}}), \varphi
\rangle$. We now verify that
$\langle\check{\mathcal{A}}_0 + \check{\mathcal{E}}, \varphi
\rangle$ and $\langle(\check{ \mathcal{D}}^{0} + \check{\mathcal
{D}}), \varphi\rangle$ are independent of one another. This will then
imply the convergence
\begin{eqnarray*}
\bigl\langle\hat{\mathcal{A}}^n_0 + \hat{
\mathcal{E}}^n, \varphi\bigr\rangle- \bigl\langle\bigl(\hat{
\mathcal{D}}^{0,n} + \hat{\mathcal{D}}^n\bigr), \varphi\bigr
\rangle&\Rightarrow& \langle\hat{\mathcal{A}}_0 + \hat{\mathcal{E}},
\varphi\rangle- \bigl\langle\bigl(\hat{ \mathcal{D}}^{0} + \hat{
\mathcal{D}}\bigr), \varphi\bigr\rangle
\end{eqnarray*}
in $ \mathbb{D}([0,T],\mathbb{R})$ as $n \rightarrow
\infty$, along the given subsequence. However, since the subsequence
was arbitrary, this will then imply convergence along the entire
sequence, thus verifying part~(1) of Theorem~\ref{Mitoma}.

Let $t_1,t_2 \in[0,T]$ with $t_1 \leq t_2$ and let $a_1, a_2,b_1,b_2
\in\mathbb{R}$ and let $x,y \in\mathbb{R}$. We will show that
%
%
\begin{eqnarray}\label{desiredresult}
&& \mathbb{P}\bigl(a_1 \bigl\langle\hat{\mathcal{A}}^n_0
+ \hat{\mathcal{E}}^n_{t_1}, \varphi\bigr\rangle+
a_2 \bigl\langle\hat{\mathcal{A}}^n_0 + \hat{
\mathcal{E}}^n_{t_2}, \varphi\bigr\rangle\leq x,\nonumber
\\
&&\hspace*{10pt} b_1 \bigl\langle\bigl(\hat{ \mathcal{D}}^{0,n}_{t_1}
+ \hat{\mathcal{D}}_{t_1}^n\bigr), \varphi\bigr
\rangle+b_2 \bigl\langle\bigl(\hat{ \mathcal{D}}^{0,n}_{t_2}
+ \hat{\mathcal{D}}_{t_2}^n\bigr), \varphi\bigr\rangle\leq
y\bigr)
\nonumber\\[-8pt]\\[-8pt]\nonumber
&&\qquad \rightarrow \mathbb{P}\bigl(a_1 \langle\hat{
\mathcal{A}}_0 + \hat{\mathcal{E}}_{t_1}, \varphi\rangle+
a_2 \langle\hat{\mathcal{A}}_0 + \hat{
\mathcal{E}}_{t_2}, \varphi\rangle\leq x\bigr)
\\
&&\hspace*{32pt}{}\times\mathbb{P}\bigl(b_1 \bigl\langle\bigl(\hat{
\mathcal{D}}^{0}_{t_1} + \hat{\mathcal{D}}_{t_1}
\bigr), \varphi\bigr\rangle+b_2 \bigl\langle\bigl(\hat{
\mathcal{D}}^{0}_{t_2} + \hat{\mathcal{D}}_{t_2}
\bigr), \varphi\bigr\rangle\leq y\bigr)
\nonumber
\end{eqnarray}
as $n \rightarrow\infty$. The analogous proof for $t_1,\ldots,t_m
\in[0,T]$ with $m> 2$ follows similarly. This will then be sufficient
to show that $\langle\check{\mathcal{A}}_0 + \check{\mathcal{E}},
\varphi\rangle$ and $\langle(\check{ \mathcal{D}}^{0} + \check
{\mathcal{D}}), \varphi\rangle$ are independent of one another.
First note that we may write
\begin{eqnarray*}
&&\mathbb{P}\bigl(a_1 \bigl\langle\hat{\mathcal{A}}^n_0
+ \hat{\mathcal{E}}^n_{t_1}, \varphi\bigr\rangle+
a_2 \bigl\langle\hat{\mathcal{A}}^n_0 + \hat{
\mathcal{E}}^n_{t_2}, \varphi\bigr\rangle\leq x,
\\
&&\hspace*{11pt} b_1 \bigl\langle\bigl(\hat{ \mathcal{D}}^{0,n}_{t_1}
+ \hat{\mathcal{D}}_{t_1}^n\bigr), \varphi\bigr
\rangle+b_2 \bigl\langle\bigl(\hat{ \mathcal{D}}^{0,n}_{t_2}
+ \hat{\mathcal{D}}_{t_2}^n\bigr), \varphi\bigr\rangle\leq
y\bigr)
\\
&&\qquad = \mathbb E [\mathbf{1}_{ \{ a_1 \langle\hat{\mathcal
{A}}^n_0 + \hat{\mathcal{E}}^n_{t_1}, \varphi\rangle+ a_2 \langle
\hat{\mathcal{A}}^n_0 + \hat{\mathcal{E}}^n_{t_2}, \varphi\rangle
\leq x \} } \mathbf{1}_{ \{ b_1 \langle(\hat{ \mathcal
{D}}^{0,n}_{t_1} + \hat{\mathcal{D}}_{t_1}^n), \varphi\rangle+b_2
\langle(\hat{ \mathcal{D}}^{0,n}_{t_2} + \hat{\mathcal
{D}}_{t_2}^n), \varphi\rangle\leq y \} } ].
\end{eqnarray*}
However, by the tower property of conditional expectations \cite
{chung2001course}, we have that
\begin{eqnarray*}
&& \mathbb E [\mathbf{1}_{ \{ a_1 \langle\hat{\mathcal{A}}^n_0
+ \hat{\mathcal{E}}^n_{t_1}, \varphi\rangle+ a_2 \langle\hat
{\mathcal{A}}^n_0 + \hat{\mathcal{E}}^n_{t_2}, \varphi\rangle\leq
x \} } \mathbf{1}_{ \{ b_1 \langle(\hat{ \mathcal{D}}^{0,n}_{t_1} +
\hat{\mathcal{D}}_{t_1}^n), \varphi\rangle+b_2 \langle(\hat{
\mathcal{D}}^{0,n}_{t_2} + \hat{\mathcal{D}}_{t_2}^n), \varphi
\rangle\leq y \} } ]
\\
&&\qquad = \mathbb E \bigl[\mathbf{1}_{ \{ a_1 \langle\hat{\mathcal
{A}}^n_0 + \hat{\mathcal{E}}^n_{t_1}, \varphi\rangle+ a_2 \langle
\hat{\mathcal{A}}^n_0 + \hat{\mathcal{E}}^n_{t_2}, \varphi\rangle
\leq x \} }
\\
&&\hspace*{42pt}{} \times\mathbb E \bigl[ \mathbf{1}_{ \{ b_1 \langle(\hat
{ \mathcal{D}}^{0,n}_{t_1} + \hat{\mathcal{D}}_{t_1}^n), \varphi
\rangle+b_2 \langle(\hat{ \mathcal{D}}^{0,n}_{t_2} + \hat{\mathcal
{D}}_{t_2}^n), \varphi\rangle\leq y \} }|
\mathcal{A}^n,\mathcal{E}^n \bigr] \bigr].
\end{eqnarray*}
We now claim that
%
%
\begin{eqnarray}\label{conuseberry2}
&&\mathbb E \bigl[ \mathbf{1}_{ \{ b_1 \langle(\hat{ \mathcal
{D}}^{0,n}_{t_1} + \hat{\mathcal{D}}_{t_1}^n), \varphi\rangle+b_2
\langle(\hat{ \mathcal{D}}^{0,n}_{t_2} + \hat{\mathcal
{D}}_{t_2}^n), \varphi\rangle\leq y \} }| \mathcal{A}^n,
\mathcal{E}^n \bigr]
\nonumber\\[-8pt]\\[-8pt]
&&\qquad \stackrel{\mathbb{P}} {\rightarrow}\mathbb{P}\bigl(b_1 \bigl
\langle\bigl(\hat{ \mathcal{D}}^{0}_{t_1} + \hat{
\mathcal{D}}_{t_1}\bigr), \varphi\bigr\rangle+b_2 \bigl
\langle\bigl(\hat{ \mathcal{D}}^{0}_{t_2} + \hat{\mathcal
{D}}_{t_2}\bigr), \varphi\bigr\rangle\leq y \bigr)
\nonumber
\end{eqnarray}
as $n \rightarrow\infty$. Then, since by assumption
\begin{eqnarray*}
&&\mathbb E [\mathbf{1}_{ \{ a_1 \langle\hat{\mathcal{A}}^n_0
+ \hat{\mathcal{E}}^n_{t_1}, \varphi\rangle+ a_2 \langle\hat
{\mathcal{A}}^n_0 + \hat{\mathcal{E}}^n_{t_2}, \varphi\rangle\leq
x \} } ]
\\
&&\qquad \rightarrow\mathbb{P}\bigl(a_1 \langle\hat{\mathcal{A}}_0
+ \hat{\mathcal{E}}_{t_1}, \varphi\rangle+ a_2 \langle
\hat{\mathcal{A}}_0 + \hat{\mathcal{E}}_{t_2}, \varphi
\rangle\leq x \bigr)
\end{eqnarray*}
as $n \rightarrow\infty$, this will then imply (\ref
{desiredresult}), thus verifying part~(1) of Theorem~\ref{Mitoma}.

In order to see that (\ref{conuseberry2}) holds, first note that
\begin{eqnarray*}
&&\mathbb E \bigl[ \mathbf{1}_{ \{ b_1 \langle(\hat{ \mathcal
{D}}^{0,n}_{t_1} + \hat{\mathcal{D}}_{t_1}^n), \varphi\rangle+b_2
\langle(\hat{ \mathcal{D}}^{0,n}_{t_2} + \hat{\mathcal
{D}}_{t_2}^n), \varphi\rangle\leq y \} }| \mathcal{A}^n,
\mathcal{E}^n \bigr] \label{conuseberry}
\\
&&\qquad =\mathbb{P} \bigl( b_1 \bigl\langle\bigl(\hat{
\mathcal{D}}^{0,n}_{t_1} + \hat{\mathcal{D}}_{t_1}^n
\bigr), \varphi\bigr\rangle+b_2 \bigl\langle\bigl(\hat{
\mathcal{D}}^{0,n}_{t_2} + \hat{\mathcal{D}}_{t_2}^n
\bigr), \varphi\bigr\rangle\leq y | \mathcal{A}^n,
\mathcal{E}^n \bigr).
\end{eqnarray*}
Now recall from (\ref{decompose}) that we may write
\begin{eqnarray*}
&& b_1 \bigl\langle\bigl(\hat{ \mathcal{D}}^{0,n}_{t_1}
+ \hat{\mathcal{D}}_{t_1}^n\bigr), \varphi\bigr
\rangle+b_2 \bigl\langle\bigl(\hat{ \mathcal{D}}^{0,n}_{t_2}
+ \hat{\mathcal{D}}_{t_2}^n\bigr), \varphi\bigr\rangle
\\
&&\qquad =\sum_{i=1}^{A_0^n(\infty)} \bigl( \bigl\langle
\hat{\mathcal{D}}^{0,n,i}_{t_1}, b_1 \varphi\bigr
\rangle+ \bigl\langle\hat{\mathcal{D}}^{0,n,i}_{t_2},
b_2 \varphi\bigr\rangle\bigr)
+\sum_{i=1}^{E^n_{t_2}} \bigl( \bigl\langle
\hat{\mathcal{D}}^{n,i}_{t_1}, b_1 \varphi\bigr
\rangle+ \bigl\langle\hat{\mathcal{D}}^{n,i}_{t_2},
b_2 \varphi\bigr\rangle\bigr).
\end{eqnarray*}
Moreover, given $\mathcal{A}^n$ and $\mathcal{E}^n$, we have that the
random variables $( \langle\hat{\mathcal{D}}^{0,n,i}_{t_1}, b_1
\varphi\rangle+
\langle\hat{\mathcal{D}}^{0,n,i}_{t_2}, b_2 \varphi\rangle
),i=1,\ldots,A^n_0(\infty)$, and $( \langle\hat{\mathcal
{D}}^{n,i}_{t_1}, b_1 \varphi\rangle+
\langle\hat{\mathcal{D}}^{n,i}_{t_2}, b_2 \varphi\rangle
),i=1,\ldots,E^n_{t_2}$, are mutually independent, with mean zero.
In addition, it is straightforward to calculate that
%
%
\begin{eqnarray}\label{calvar}
&&E \Biggl[\sum_{i=1}^{A_0^n(\infty)} \bigl( \bigl
\langle\hat{\mathcal{D}}^{0,n,i}_{t_1}, b_1
\varphi\bigr\rangle+ \bigl\langle\hat{\mathcal{D}}^{0,n,i}_{t_2}, b_2
\varphi\bigr\rangle\bigr)^2 \Big| \mathcal{A}^n_0,
\mathcal{E}^n \Biggr] \nonumber
\\
&&\quad{}+E \Biggl[\sum_{i=1}^{E^n_{t_2}} \bigl( \bigl
\langle\hat{\mathcal{D}}^{n,i}_{t_1}, b_1
\varphi\bigr\rangle+ \bigl\langle\hat{\mathcal{D}}^{n,i}_{t_2}, b_2
\varphi\bigr\rangle\bigr)^2 \Big| \mathcal{A}^n_0,
\mathcal{E}^n \Biggr]
\\
&&\qquad=E \biggl[\int_0^{t_1} \bigl\langle\bar{
\mathcal{A}}^n_u, (b_1 \varphi
+b_2 \varphi)^2 h \bigr\rangle \,du \Big| \mathcal{A}^n_0,
\mathcal{E}^n \biggr].
\nonumber
\end{eqnarray}
Also, note that for each $i=1,\ldots,A^n_0(\infty)$,
%
%
\begin{eqnarray}\label{thirdmomfirst}
&&E \bigl[\bigl( \bigl\langle\hat{\mathcal{D}}^{0,n,i}_{t_1},
b_1 \varphi\bigr\rangle+ \bigl\langle\hat{\mathcal{D}}^{0,n,i}_{t_2},
b_2 \varphi\bigr\rangle\bigr)^3 | \mathcal{A}^n_0,
\mathcal{E}^n \bigr] \nonumber
\\
&&\qquad =E \bigl[\bigl( \bigl\langle\hat{\mathcal{D}}^{0,n,i}_{t_1},
b_1 \varphi\bigr\rangle+ \bigl\langle\hat{\mathcal{D}}^{0,n,i}_{t_2},
b_2 \varphi\bigr\rangle\bigr)^2 \bigl( \bigl\langle
\hat{\mathcal{D}}^{0,n,i}_{t_1}, b_1 \varphi\bigr
\rangle+ \bigl\langle\hat{\mathcal{D}}^{0,n,i}_{t_2},
b_2 \varphi\bigr\rangle\bigr) | \mathcal{A}^n_0,
\mathcal{E}^n \bigr]\hspace*{-10pt}
\nonumber\\[-8pt]\\[-8pt]
&&\qquad \leq\frac{(\llvert b_1\rrvert+\llvert b_2\rrvert)\llVert \varphi
\rrVert _{\infty}(1+t_2 \llVert h\rrVert _{\infty})}{\sqrt
{n}}\nonumber
\\
&&\quad\qquad{}\times E \bigl[\bigl( \bigl\langle\hat{\mathcal{D}}^{0,n,i}_{t_1}, b_1
\varphi\bigr\rangle+ \bigl\langle\hat{\mathcal{D}}^{0,n,i}_{t_2}, b_2
\varphi\bigr\rangle\bigr)^2 | \mathcal{A}^n_0,
\mathcal{E}^n \bigr],
\nonumber
\end{eqnarray}
and, similarly, for each $i=1,\ldots,E^n_{t_2}$,
%
%
\begin{eqnarray}\label{thirdmomsec}
&&E \bigl[\bigl( \bigl\langle\hat{\mathcal{D}}^{n,i}_{t_1},
b_1 \varphi\bigr\rangle+ \bigl\langle\hat{\mathcal{D}}^{0,n,i}_{t_2},
b_2 \varphi\bigr\rangle\bigr)^3 | \mathcal{A}^n_0,
\mathcal{E}^n \bigr] \nonumber
\\
&&\qquad \leq\frac{(\llvert b_1\rrvert+\llvert b_2\rrvert)\llVert \varphi
\rrVert _{\infty}(1+t_2 \llVert h\rrVert _{\infty})}{\sqrt
{n}}
\\
&&\quad\qquad{}\times E \bigl[\bigl( \bigl\langle\hat{\mathcal{D}}^{n,i}_{t_1}, b_1
\varphi\bigr\rangle+ \bigl\langle\hat{\mathcal{D}}^{n,i}_{t_2}, b_2
\varphi\bigr\rangle\bigr)^2 | \mathcal{A}^n_0,
\mathcal{E}^n \bigr].
\nonumber
\end{eqnarray}
Now let $\Phi$ denote the c.d.f. of a standard, normal random variable.
It then follows by (\ref{calvar}), (\ref{thirdmomfirst}), (\ref
{thirdmomsec}) and an application of the Berry--Esseen theorem \cite
{chung2001course} for independent (but not necessarily identically
distributed) random variables that
\begin{eqnarray*}
&&\biggl\llvert\mathbb{P} \biggl(\frac{ b_1 \langle(\hat{ \mathcal
{D}}^{0,n}_{t_1} + \hat{\mathcal{D}}_{t_1}^n), \varphi\rangle+b_2
\langle(\hat{ \mathcal{D}}^{0,n}_{t_2} + \hat{\mathcal
{D}}_{t_2}^n), \varphi\rangle}{ ( \mathbb E [\int_0^{t_1}
\langle\bar{\mathcal{A}}^n_u, (b_1 \varphi+b_2 \varphi)^2 h
\rangle \,du | \mathcal{A}^n_0,\mathcal{E}^n ] )^{1/2} } \leq y |
\mathcal{A}^n,\mathcal{E}^n \biggr)- \Phi(y) \biggr\rrvert
\\
&&\qquad \leq\frac{1}{\sqrt{n}} \frac{(\llvert b_1\rrvert+\llvert b_2\rrvert
)\llVert \varphi\rrVert _{\infty}(1+t_2 \llVert h\rrVert
_{\infty})}{ ( \mathbb E [\int_0^{t_1} \langle\bar
{\mathcal{A}}^n_u, (b_1 \varphi+b_2 \varphi)^2 h \rangle \,du |
\mathcal{A}^n_0,\mathcal{E}^n ] )^{1/2}}.
\end{eqnarray*}
Hence, in order to complete the proof of (\ref{conuseberry2}) and
hence verify part~(1) of Theorem~\ref{Mitoma}, it suffices to show that
%
%
\begin{eqnarray} \label{imply53}
\qquad && E \biggl[\int_0^{t_1} \bigl\langle\bar{
\mathcal{A}}^n_u, (b_1 \varphi
+b_2 \varphi)^2 h \bigr\rangle \,du \Big| \mathcal{A}^n_0,
\mathcal{E}^n \biggr]
\stackrel{\mathbb{P}} {\rightarrow}\int_0^{t_1}
\bigl\langle\bar{\mathcal{A}}_u, (b_1
\varphi+b_2 \varphi)^2 h \bigr\rangle \,du
\nonumber\\[-8pt]\\[-8pt]
\eqntext{\mbox{as }n \rightarrow\infty.}
\end{eqnarray}
However, note that by Theorem~\ref{AgeFluidLimitTheorem} and the
continuity of the integral map \cite{Billingsley99}, we have that
%
%
\begin{eqnarray}
\int_0^{t_1} \bigl\langle\bar{
\mathcal{A}}^n_u, (b_1
\varphi+b_2 \varphi)^2 h \bigr\rangle \,du &\stackrel{
\mathbb{P}} {\rightarrow}&\int_0^{t_1} \bigl\langle
\bar{\mathcal{A}}_u, (b_1 \varphi+b_2
\varphi)^2 h \bigr\rangle \,du, \label{imply56}
\end{eqnarray}
as $n \rightarrow\infty$. Next, note that the uniform integrability
of $\{\bar{A}^n_0(\infty), n \geq1\}$ and $\{\bar{E}^n_T, n \geq1\}
$ implies
the uniform integrability of
%
%
\begin{equation}
\biggl\{ \int_0^{t_1} \bigl\langle\bar{
\mathcal{A}}^n_u, (b_1 \varphi
+b_2 \varphi)^2 h \bigr\rangle \,du, n \geq1 \biggr\}.
\end{equation}
It is then straightforward to show that (\ref{imply56}) implies (\ref
{imply53}), thus completing the verification of part~(1) of Theorem~\ref
{Mitoma}. The proof of the verification of part~(2) of Theorem~\ref
{Mitoma} follows in a similar manner to the above and has been omitted
for the sake of brevity.
This completes the proof.
\end{pf}

The following is now our main result of this section. Its proof is a
straightforward consequence
of Theorem~\ref{cmtprop}, (\ref{psidifage}) and Proposition~\ref
{Ddiffusionlimit}.

%
\begin{teo}\label{AgeDiffusionLimitTheorem}
If $ \hat{\mathcal{A}}_0^n + \hat{\mathcal{E}}^n \Rightarrow
\hat{\mathcal{A}}_0 + \hat{\mathcal{E}} $ in $\mathbb
{D}([0,T],\mathcal{S}') $ as $n
\rightarrow\infty$, then
%
%
\begin{equation}
\hat{\mathcal{A}}^n \Rightarrow\hat{\mathcal{A}}\qquad\mbox{in }
\mathbb{D}\bigl([0,T],\mathcal{S}'\bigr)\mbox{ as } n
\rightarrow\infty, \label{weakdiffage}
\end{equation}
where $\hat{\mathcal{A}}$ is the solution to the stochastic integral equation
%
%
\begin{eqnarray}\label{AgeDiffusionLimit}
\langle\hat{ \mathcal{A} }_t, \varphi\rangle&=&
\langle\hat{\mathcal{A}}_0, \varphi\rangle+ \langle\hat{
\mathcal{E}}_t, \varphi\rangle- \bigl\langle\hat{
\mathcal{D}}^0 + \hat{ \mathcal{D } }, \varphi\bigr\rangle
\nonumber\\[-8pt]\\[-8pt]\nonumber
&&{} - \int
_0^t \langle\hat{ \mathcal{A} }_s,
h \varphi\rangle \,ds + \int_0^t \bigl\langle
\hat{ \mathcal{A} }_s, \varphi' \bigr\rangle \,ds,
\end{eqnarray}
for each $t \in[0,T]$ and $\varphi\in\mathcal{S}$. In addition, if
$\hat{\mathcal{E}}$ is an $\mathcal{S}'$-valued Wiener
process with covariance functional $K_{\hat{\mathcal{E}}}(s,\varphi;
t, \psi) = \sigma^2 (s \wedge t) \varphi(0) \psi(0)$, then $\hat
{\mathcal{A}}$
is a generalized $\mathcal{S}'$-valued Ornstein--Uhlenbeck process driven
by a generalized $\mathcal{S}'$-valued Wiener process with covariance
functional
%
%
\begin{equation}
\label{drivingcovariance}
\qquad K_{\hat{\mathcal{E}} - (\hat{\mathcal{D}}^0 +
\hat{\mathcal
{D}})}(s,\varphi; t, \psi) = \sigma^2 (s
\wedge t) \varphi(0) \psi(0) +\int_0^{s \wedge t} \langle
\bar{\mathcal{A}}_u h, \varphi\psi\rangle \,du.
\end{equation}
\end{teo}

\begin{pf}
First note that by (\ref{psidifage}) we have that
$\hat{\mathcal{A}}^n=\Psi_{B^{\mathcal{A}}}(\hat{\mathcal{A}}^n_0+
\hat{\mathcal{E}}^n -( \hat{\mathcal{D}}^{0,n} +
\hat{\mathcal{D}}^n))$, where the map $\Psi_{B^{\mathcal{A}}}\dvtx
\mathbb{D}([0,T],\mathcal{S}') \mapsto\mathbb{D}([0,T],\mathcal
{S}')$ is a continuous map. The convergence (\ref{weakdiffage}) now
follows by Theorem~\ref{cmtprop} and Proposition~\ref{Ddiffusionlimit}.\vspace*{1pt}

Next, suppose that $\hat{\mathcal{E}}$ is an $\mathcal{S}'$-valued Wiener
process with covariance functional $K_{\hat{\mathcal{E}}}(s,\varphi;
t, \psi) = \sigma^2 (s \wedge t) \varphi(0) \psi(0)$. Then,
combining this
with (\ref{Dcovariance}) and the fact that
$\hat{\mathcal{D}}^0+\hat{\mathcal{D}}$ and $\hat{\mathcal
{A}}_{0}+\hat{\mathcal{E}}$ are
independent from Proposition~\ref{Ddiffusionlimit}, yields
(\ref{drivingcovariance}). Thus, by Definition~\ref{defou}, $\hat
{\mathcal{A}}$ is an
$\mathcal{S}'$-valued Ornstein--Uhlenbeck process.
\end{pf}

Recall now from Proposition~\ref{StationaryFluid} of Section~\ref
{SubsecAgesFluid} that if $\langle\bar{\mathcal{E}}, \varphi
\rangle= \lambda\varphi(0) e $ for each $\varphi\in\mathcal{S}$
and some $\lambda\geq0$, then a stationary solution to
the fluid limit equation (\ref{AgeFluidLimit}) is given by $\bar{\mathcal
{A}} =
\lambda\mathcal{F}_e$. We now\vspace*{1pt} show that under the additional
condition that
$\hat{\mathcal{E}}$ is an $\mathcal{S}'$-valued Wiener
process with covariance functional $K_{\hat{\mathcal{E}}}(s,\varphi;
t, \psi) = \sigma^2 (s \wedge t) \varphi(0) \psi(0)$, then the
resulting limiting diffusion scaled age
process $\hat{\mathcal{A}}$ of Theorem~\ref
{AgeDiffusionLimitTheorem} is a
time-homogeneous Markov process. Our result is the following. Note also
that a similar approach may be used to analyze the
diffusion limit of the residual service time process in Theorem
\ref{ResidualDiffusionLimitTheorem} in the following subsection.

%
\begin{prop}\label{AgeMarkov}
If $\langle\bar{\mathcal{E}}, \varphi\rangle= \lambda\varphi
(0) e $ for each $\varphi\in\mathcal{S}$, $\bar{\mathcal{A}}_0 =
\lambda
\mathcal{F}_e$ and $\hat{\mathcal{E}}$ is an \mbox{$\mathcal{S}'$-}valued Wiener
process with covariance functional $K_{\hat{\mathcal{E}}}(s,\varphi;
t, \psi) = \sigma^2 (s \wedge t) \varphi(0) \psi(0)$, then $\hat
{\mathcal{A}}$ is an \mbox{$\mathcal{S}'$-}valued
Ornstein--Uhlenbeck process driven by an \mbox{$\mathcal{S}'$-}valued Wiener
process with covariance functional given
for each $\varphi, \psi\in\mathcal{S}$ and $s,t \geq0$ by
%
%
\begin{equation}
\label{AgeDrivingCovar} K_{\hat{\mathcal{E}} - ( \hat{\mathcal{D}}^0 +
\hat{\mathcal{D}}
)}(s, \varphi; t, \psi) = (s \wedge t) \bigl
\langle\sigma^2 \delta_0 + \lambda\mathcal{F}, \varphi\psi
\bigr\rangle.
\end{equation}
\end{prop}

\begin{pf}
It is clear that the covariance functional of $\hat{\mathcal{E}} $ is given
by
%
%
\begin{equation}
\label{ECovar} K_{\hat{\mathcal{E}}}(s,\varphi; t, \psi) = (s \wedge t)
\bigl
\langle\sigma^2\delta_0, \varphi\psi\bigr\rangle.
\end{equation}
We now show that the covariance functional of $\hat{\mathcal{D}}^0 +
\hat{\mathcal{D}}$ is given by
%
%
\begin{equation}
\label{DCovar} K_{\hat{\mathcal{D}}^0 + \hat{\mathcal{D}}}(s, \varphi;
t, \psi) = \lambda( s \wedge t )
\langle\mathcal{F}, \varphi\psi\rangle.
\end{equation}
Then, since $\hat{\mathcal{E}} $ and $\hat{\mathcal{D}}^0 + \hat
{\mathcal{D}}$ are independent, summing (\ref{ECovar}) and (\ref
{DCovar}) will prove~(\ref{AgeDrivingCovar}), which will complete the proof.

Note that by Proposition~\ref{StationaryFluid} we have that since by
assumption $\bar{\mathcal{A}}_0 =
\lambda\mathcal{F}_e$, it follows that $\bar{\mathcal{A}} = \lambda
\mathcal{F}_e$ is the unique solution to the fluid limit equation
(\ref{AgeFluidLimit}). Therefore, by Lemma
\ref{LemDcheck} we have that for each $\varphi, \psi\in\mathcal
{S}$ and $s, t \geq0$,
\begin{eqnarray*}
K_{\hat{\mathcal{D}}^0 + \hat{\mathcal{D}}}(s, \varphi; t, \psi) &=& \int
_0^{s \wedge t}
\langle\bar{\mathcal{A}}_u, \varphi\psi h \rangle \,du
\\
&=& \int_0^{s \wedge t} \int_{\mathbb R_+}
\varphi(x) \psi(x) h(x) \,d\bar{A}_u(x) \,du
\\
&=& \lambda(s \wedge t) \int_{\mathbb R_+} \varphi(x) \psi(x) h(x)
\bar{F}(x) \,dx
\\
&=& \lambda(s \wedge t) \int_{\mathbb R_+} \varphi(x) \psi(x) f(x)
\,dx.
\end{eqnarray*}
This proves (\ref{DCovar}), which completes the proof.
\end{pf}

We now note that using (\ref{integralsolution}) of Theorem~\ref
{ThmRegulatorMap} and (\ref{semigroupdef}) of Proposition~\ref{PropAgeC01},
one may obtain an explicit representation of $\hat{\mathcal{A}}$ in
terms of the $\mathcal{S}'$-valued Wiener process given in Proposition
\ref{AgeMarkov} above. Direct calculations may then be used in order
to obtain the
transient and limiting distribution of $\hat{\mathcal{A}}$. In
particular, assuming that $\hat{\mathcal{A}}_0$ is a
Gaussian random variable, one may then show that for each $t \in
[0,T]$, $\hat{\mathcal{A}}_t$ is a Gaussian random
variable with mean
%
%
\begin{equation}
\label{AgeTransientMean} \mathbb E\bigl[ \langle\hat{\mathcal{A}}_t,
\varphi
\rangle\bigr] = \bigl\langle\hat{\mathcal{A}}_0, \bar{F}^{-1}
\tau_{-t} (\varphi\bar{F} ) \bigr\rangle,\qquad\varphi\in\mathcal{S},
\end{equation}
and covariance functional given for each $\varphi, \psi\in\mathcal
{S}$ and $t \in[0,T]$ by
%
%
\begin{eqnarray}\label{AgeTransientCovar}
\mathbb E\bigl[ \langle\hat{\mathcal{A}}_t, \varphi\rangle\langle
\hat{\mathcal{A}}_t, \psi\rangle\bigr] &=&\lambda\bigl\langle
\mathcal{F}_e, \bar{F}^{-1} \tau_{-t} (\varphi
\psi\bar{F} ) \bigl(1 - \bar{F}^{-1} \tau_{-t} \bar{F} \bigr)
\bigr\rangle
\nonumber\\[-8pt]\\[-8pt]\nonumber
&&{}+ \int_0^t \varphi(u) \psi(u) \bigl(\lambda
F(u) + \sigma^2 \bar{F}(u) \bigr) \bar{F}(u) \,du.
\end{eqnarray}
In addition, taking limits as $t \rightarrow\infty$, one also finds
that $\hat{\mathcal{A}}_t$ weakly converges as $t \rightarrow\infty
$ to a Gaussian random variable $\mathcal{A}_{\infty}$ with mean zero
and covariance functional given for each $\varphi, \psi\in\mathcal
{S}$ by
\[
\mathbb E\bigl[ \langle\hat{\mathcal{A}}_\infty, \varphi\rangle\langle
\hat{\mathcal{A}}_\infty, \psi\rangle\bigr] = \bigl\langle
\mathcal{F}_e, \bigl(\lambda F + \sigma^2 \bar{F} \bigr)
\varphi\psi\bigr\rangle.
\]

We now conclude this subsection by noting that one may heuristically
attempt to substitute the test function $\mathbf{1}_{ \{ x \geq0 \}
}$ into the formula for $\hat{\mathcal{A}}$ provided
by Theorem~\ref{ThmRegulatorMap} in order to obtain an expression for
the limiting diffusion scaled total number of customers in the system.
For instance,\vspace*{2pt} suppose that $\hat{\mathcal{A}}_0=0$ and that, as in
the statement of Proposition~\ref{AgeMarkov}, we have that $\langle
\bar{\mathcal{E}}, \varphi\rangle= \lambda\varphi(0) e $ for
each $\varphi\in\mathcal{S}$ and $\bar{\mathcal{A}}_0 = \lambda
\mathcal{F}_e$ and $\hat{\mathcal{E}}$ is an $\mathcal{S}'$-valued Wiener
process with covariance functional $K_{\hat{\mathcal{E}}}(s,\varphi;
t, \psi) = \sigma^2 (s \wedge t) \varphi(0) \psi(0)$. Then, using
the form of the generator $B^{\mathcal{A}}$ from (\ref{defBA}), the
semi-group $(S^{\mathcal{A}}_t)_{t \geq0}$ from Proposition~\ref
{PropAgeC01} and (\ref{AgeDrivingCovar}) of Proposition~\ref
{AgeMarkov}, one obtains after a substitution into Theorem~\ref
{ThmRegulatorMap} that the limiting diffusion scaled number of
customers in the system at time $t \geq0$ is heuristically given by
\begin{eqnarray*}
\hat{B}_t - \int_0^t
\hat{B}_s f(t-s)\,ds &=& \int_0^t
\bar{F}(t-s)\,d\hat{B}_s,
\end{eqnarray*}
where $\hat{B}=(\hat{B}_t)_{t \geq0}$ is a Brownian motion with
infinitesimal variance $\sigma^2+\lambda$.

\subsection{Residuals}\label{SubsecResidualsDiffusion}

We next proceed to study the residual service time process $\mathcal
{R}$ defined in Section~\ref{SubsecResidualEquations}. Our setup is the
same as in Section~\ref{SubsecResidualsFluid}. That is, we consider a
sequence of $G/\mathit{GI}/\infty$ queues indexed by $n$, where the arrival
rate to the $n$th system is of order $n$ and the service time
distribution does not change with $n$. For the remainder of this
subsection, we also assume that $ \bar{\mathcal{R}}^n_0 + \bar{E}^n
\mathcal{F} \Rightarrow
\bar{\mathcal{R}}_0 + \bar{E}\mathcal{F} $ as $n \rightarrow\infty
$, where
$ \bar{\mathcal{R}}_0 + \bar{E}\mathcal{F} $ is a\vspace*{1pt} nonrandom
quantity. By Theorem~\ref{ResidualFluidLimitThm} of Section~\ref
{SubsecResidualsFluid}, this implies that $\bar{\mathcal{R}}$
is nonrandom as well.

Now, for each $n \geq1$, in addition to the diffusion scaled quantities
defined in Section~\ref{SubsecAgesDiffusion}, let us also now define the
diffusion scaled quantities
\[
\hat{\mathcal{R}}^n \equiv\sqrt{n} \bigl(\bar{\mathcal{R}}^n
- \bar{\mathcal{R}} \bigr), \qquad\hat{\mathcal{R}}_0^n
\equiv\sqrt{n} \bigl(\bar{\mathcal{R}}_0^n - \bar{
\mathcal{R}_0} \bigr) \quad\mbox{and}\quad\hat{\mathcal{G}}^n
\equiv\sqrt{n} \bar{\mathcal{G}}^n.
\]
Then, after recalling the form of the fluid limit $\bar{\mathcal{R}}$
from Theorem~\ref{ResidualFluidLimitThm},
note that using system equation (\ref{ResidualsSystemEquation}) in
conjunction with Theorem~\ref{ThmRegulatorMap} and Proposition~\ref
{PropResidualC01}, one has that
%
%
\begin{equation}
\hat{\mathcal{R}}^n = \Psi_{B^{\mathcal{R}}}\bigl( \hat{\mathcal
{R}}^n_0+ \hat{E}^n \mathcal{F}+\hat{
\mathcal{G}}^n \bigr), \label{psidifres}
\end{equation}
where the map $\Psi_{B^{\mathcal{R}}}\dvtx  \mathbb{D}([0,T],\mathcal
{S}') \mapsto\mathbb{D}([0,T],\mathcal{S}')$ is a continuous map.
Our strategy now is to proceed similar
to as in Section~\ref{SubsecAgesDiffusion}. That is, we first prove a weak
convergence result for the sequence $( \hat{\mathcal{R}}^n_0+ \hat
{E}^n \mathcal{F} +\hat{\mathcal{G}}^n )_{n \geq1}$
and then we apply Theorem~\ref{cmtprop} together with (\ref
{psidifres}) in order to obtain a diffusion limit result for the
sequence $(\hat{\mathcal{R}}^n)_{n \geq1}$.

We first show that for each $n \geq1$, the process $\hat{\mathcal
{G}}^n$ may be well approximated by a process which is independent of $
\hat{\mathcal{R}}^n_0+ \hat{E}^n \mathcal{F}$. For each $n \geq1$,
let $\check{\mathcal{G}}^n $ be the $\mathcal{S}'$-valued process
defined for $\varphi\in\mathcal{S}$ by
\begin{eqnarray*}
\bigl\langle\check{\mathcal{G}}^n_t, \varphi\bigr
\rangle&=& \frac{1}{\sqrt
{n}} \sum_{i=1}^{\lfloor n \bar{E}_t \rfloor}
\bigl(\varphi(\eta_i) - \langle\mathcal{F},\varphi\rangle\bigr),\qquad
t\geq0.
\end{eqnarray*}
Note that it is clear that $\check{\mathcal{G}}^n $ is independent of
$ \hat{\mathcal{R}}^n_0+ \hat{E}^n\mathcal{F}$. We now have the
following result. Its proof may be found in the \hyperref[appen]{Appendix}.

%
\begin{lem}\label{Gdiffusionlimitindep}
If $ \hat{\mathcal{R}}^n_0+ \hat{E}^n \mathcal{F} \Rightarrow
\hat{\mathcal{R}}_0 + \hat{E}\mathcal{F} $ in $ \mathbb
{D}([0,T],\mathcal{S}') $ as $n
\rightarrow\infty$, then
%
%
\begin{equation}\label{firstconlem}
\hat{\mathcal{G}}^n -\check{\mathcal{G}}^n \Rightarrow0
\qquad\mbox{in }\mathbb{D}\bigl([0,T],\mathcal{S}'\bigr)\mbox{ as }n
\rightarrow\infty,
\end{equation}
and
%
%
\begin{equation}\label{secconvlem}
\check{\mathcal{G}}^n \Rightarrow\hat{\mathcal{G}}\qquad\mbox{in }
\mathbb{D}\bigl([0,T],\mathcal{S}'\bigr)\mbox{ as }n \rightarrow
\infty,
\end{equation}
where
$\hat{\mathcal{G}}$ is an $\mathcal{S}'$-valued Wiener process with
covariance functional given for $\varphi, \psi\in\mathcal{S}$ and
$s,t \geq0$ by
%
%
\begin{equation}
\label{Gcovariance} K_{\hat{G}}( s, \varphi; t, \psi) = (
\bar{E}_s \wedge\bar{E}_t ) \operatorname{Cov} \bigl(
\varphi(\eta), \psi(\eta) \bigr),
\end{equation}
where $\eta$ is a random variable with c.d.f. $F$.
\end{lem}

\begin{pf}See \hyperref[appen]{Appendix}.
\end{pf}

Using Lemma~\ref{Gdiffusionlimitindep}, we may now prove the
following result on the weak convergence of $( \hat{\mathcal{R}}^n_0+
\hat{E}^n\mathcal{F} +\hat{\mathcal{G}}^n )_{n \geq1}$. We have
the following.

\begin{prop}\label{Gdiffusionlimit}
If $\hat{\mathcal{R}}^n_0+ \hat{E}^n\mathcal{F} \Rightarrow
\hat{\mathcal{R}}_0 + \hat{E}\mathcal{F} $ in $\mathbb
{D}([0,T],\mathcal{S}') $ as $n
\rightarrow\infty$, then
%
%
\begin{equation}
\label{ResidualsJointDiffusion}
\qquad \hat{\mathcal{R}}^n_0 +
\hat{E}^n\mathcal{F} + \hat{\mathcal{G}}^n \Rightarrow
\hat{\mathcal{R}}_0 + \hat{E}\mathcal{F} +\hat{\mathcal{G}} \qquad
\mbox{in }\mathbb{D}\bigl([0,T],\mathcal{S}'\bigr) \mbox{ as }n
\rightarrow\infty,
\end{equation}
where $\hat{\mathcal{G}}$ is as given in Lemma~\ref
{Gdiffusionlimitindep} and is independent of $ \hat{\mathcal{R}}_0+
\hat{E} \mathcal{F} $.
\end{prop}

\begin{pf}
Since $ \check{G}^{n}$ is independent of $\hat{\mathcal{R}}^n_0
+\hat{E}^n\mathcal{F} $ for each $n \geq1 $, it follows Theorem~\ref
{Mitoma} and (\ref{secconvlem}) of Lemma~\ref
{Gdiffusionlimitindep} that we have the convergence
%
%
\begin{equation}\label{Q0AcheckG}
\qquad \hat{\mathcal{R}}^n_0 +
\hat{E}^n\mathcal{F} + \check{G}^{n} \Rightarrow\hat{
\mathcal{R}}_0 + \hat{E} \mathcal{F} + \hat{\mathcal{G}} \qquad
\mbox{in } \mathbb{D}\bigl([0,T],\mathcal{S}'\bigr) \mbox{ as
}n \rightarrow\infty.
\end{equation}
The result now follows by (\ref{Q0AcheckG}), Theorem~\ref{Mitoma},
(\ref{firstconlem}) of
Lemma~\ref{Gdiffusionlimitindep} and the fact that we may write
\begin{eqnarray*}
\hat{\mathcal{R}}^n_0 + \hat{E}^n\mathcal{F}
+ \check{G}^{n} &=& \hat{\mathcal{R}}^n_0 +
\hat{E}^n\mathcal{F} + \check{G}^{n}+ \bigl(
\hat{G}^{n}-\check{G}^{n}\bigr).
\end{eqnarray*}\upqed
\end{pf}

The following is now the main result of this subsection. It provides a
weak limit for the sequence $(\hat{\mathcal{R}}^n)_{n \geq1}$.

%
\begin{teo}\label{ResidualDiffusionLimitTheorem}
If $ \hat{\mathcal{R}}_0^n + \hat{E}^n \mathcal{F} \Rightarrow
\hat{\mathcal{R}}_0 +\hat{E} \mathcal{F} $ in $\mathbb
{D}([0,T],\mathcal{S}')$ as $n
\rightarrow\infty$, then
%
%
\begin{equation}
\hat{\mathcal{R}}^n \Rightarrow\hat{\mathcal{R}} \qquad\mbox{in }
\mathbb{D}\bigl([0,T],\mathcal{S}'\bigr) \mbox{ as }n
\rightarrow\infty, \label{weakdiffres}
\end{equation}
where $\hat{\mathcal{R}}$ is the solution to the stochastic integral equation
%
%
\begin{eqnarray}\label{ResidualDiffusionLimit}
\langle\hat{ \mathcal{R} }_t, \varphi
\rangle=
\langle\hat{\mathcal{R}}_0, \varphi\rangle+ \langle\hat{
\mathcal{G}}_t, \varphi\rangle+ \hat{E}_t \langle
\mathcal{F}, \varphi\rangle- \int_0^t \bigl
\langle\hat{ \mathcal{R} }_s, \varphi' \bigr\rangle
\,ds,
\nonumber\\[-8pt]\\[-8pt]
\eqntext{t \in[0,T], \varphi\in\mathcal{S}.}
\end{eqnarray}
In addition, if $\hat{E}$ is a Brownian
motion with diffusion coefficient $\sigma$, then $\hat{\mathcal{R}}$
is a generalized $\mathcal{S}'$-valued Ornstein--Uhlenbeck process
driven by a generalized \mbox{$\mathcal{S}'$-}valued Wiener process with covariance
functional
%
%
\begin{eqnarray}\label{EFGBrownianCovariance}
&& K_{\hat{E}\mathcal{F} + \hat{G}}( s,
\varphi; t, \psi)
\nonumber\\[-8pt]\\[-8pt]\nonumber
&&\qquad = \sigma^2 (
s \wedge t ) \mathbb E\bigl[ \varphi(\eta) \bigr]\mathbb E\bigl[ \psi(
\eta)\bigr] + (\bar{E}_s \wedge\bar{E}_t ) \operatorname{Cov}
\bigl( \varphi(\eta), \psi(\eta) \bigr),
\end{eqnarray}
where $\eta$ is a random variable with c.d.f. $F$.
\end{teo}

\begin{pf}
First note that by (\ref{psidifres}) we have that
$\hat{\mathcal{R}}^n=\Psi_{B^{\mathcal{R}}}(\hat{\mathcal{R}}^n_0+
\hat{E}^n \mathcal{F}+\hat{\mathcal{G}}^n )$, where the map $\Psi
_{B^{\mathcal{R}}}\dvtx \mathbb{D}([0,T],\mathcal{S}') \mapsto\mathbb
{D}([0,T],\mathcal{S}')$ is a continuous map. The convergence (\ref
{weakdiffres}) now follows by Theorem~\ref{cmtprop} and Proposition
\ref{Gdiffusionlimit}.

Next, note that if $\hat{E}$ is a Brownian motion with diffusion
coefficient $\sigma$, then it is easily checked that
$\hat{E} \mathcal{F}$ is an $\mathcal{S}'$-valued Wiener process
with covariance functional
%
%
\begin{equation}
\label{Ecovariance} K_{\hat{E}\mathcal{F}}( s, \varphi; t, \psi) =
\sigma^2 (
s \wedge t ) \mathbb E\bigl[ \varphi(\eta) \bigr] \mathbb E\bigl[ \psi
(\eta)
\bigr].
\end{equation}
Combining (\ref{Gcovariance}) with (\ref{Ecovariance}) and the fact
that $\hat{E}\mathcal{F}$ and $\hat{\mathcal{G}}$ are independent,
yields~(\ref{EFGBrownianCovariance}).
\end{pf}

%
\begin{rem}
Note that in the special case when the arrival process to the $n$th
system is a Poisson process with rate $\lambda n$, we then have that
$\bar{E} = \lambda e$ and so $\hat{E}$ turns out to be a Brownian
motion with diffusion coefficient $\lambda$. It then follows that
$K_{\hat{E}\mathcal{F}}( s, \varphi; t, \psi) = \lambda( s \wedge
t ) \mathbb E[ \varphi(\eta) \psi(\eta) ]$ and so Theorem {\ref{ResidualDiffusionLimitTheorem}} gives us a version of Theorem 3 of
{\em\cite{DM08b}}.
\end{rem}\vspace*{-12pt}

\begin{appendix}\label{appen}
\section*{Appendix}
In the \hyperref[appen]{Appendix}, we provide the proofs of several supporting lemmas
from the main body of the paper. We
begin with the proof of Lemma \ref{hazardlemma}.

\setcounter{equation}{0}

\begin{pf*}{Proof of Lemma \ref{hazardlemma}}
We prove part~(1) by induction. For each $n \geq0$ and $t \geq0$ fixed,
denote the quantity on
the lefthand side of (\ref{ccdfratiobound}) by $L_n$. For the base
case of
$n=0$, it is straightforward to see that $L_0 \leq1$. Next, for the inductive
step, suppose that (\ref{ccdfratiobound}) holds for $n =
0,\ldots,k-1$, and $t \geq0$. Then, we have that
\begin{eqnarray*}
\biggl\llvert\biggl(\frac{\bar{F}(x+t)}{\bar{F}(x)} \biggr)^{(k)}
\biggr\rrvert&=&
\biggl\llvert\biggl( \biggl(\frac{\bar{F}(x+t)}{\bar{F}(x)} \biggr)^{(1)}
\biggr)^{(k-1)} \biggr\rrvert
\\
&=& \biggl\llvert\biggl( \bigl(h(x) - h(x+t) \bigr) \frac{\bar
{F}(x+t)}{\bar{F}(x)}
\biggr)^{(k-1)} \biggr\rrvert
\\
&\leq&\sum_{i=0}^{k-1} \pmatrix{k-1
\cr
i}
\bigl\llvert\bigl(h(x) - h(x+t) \bigr)^{(k-1-i)} \bigr\rrvert\biggl
\llvert
\biggl(\frac{\bar
{F}(x+t)}{\bar{F}(x)} \biggr)^{(i)} \biggr\rrvert
\\
&\leq&2 \sum_{i=0}^{k-1} \pmatrix{k-1
\cr
i}
\bigl\llVert h^{(k-1-i)} \bigr\rrVert_\infty L_i
\\
&<& \infty.
\end{eqnarray*}
This completes the proof of part~(1).

We next prove part~(2) by induction as well. First recall that for $s,t
\geq0$, we may write
%
%
\begin{eqnarray}
\frac{\bar{F}(x+t)}{\bar{F}(x+s)}&=& \exp\biggl(-\int_{x+s}^{x+t}h(u)\,du
\biggr). \label{inthazard}
\end{eqnarray}
Hence, for the base case of $n=0$, we have that
\begin{eqnarray*}
\sup_{x \geq0} \biggl\llvert\frac{\bar{F}(x+t)}{\bar{F}(x)} -
\frac
{\bar{F}(x+s)}{\bar{F}(x)} \biggr\rrvert&=& \sup_{x \geq0} \biggl\llvert
\frac
{\bar{F}(x+s)}{\bar{F}(x)} \biggl(1 - \frac{\bar{F}(x+t)}{\bar
{F}(x+s)} \biggr) \biggr\rrvert
\\
&\leq&\sup_{x \geq0} \biggl\llvert\frac{\bar{F}(x+s)}{\bar{F}(x)}\biggr
\rrvert
\sup_{x \geq0} \biggl\llvert1 - \frac{\bar{F}(x+t)}{\bar{F}(x+s)
}\biggr\rrvert
\\
&\leq&\sup_{x \geq0} \biggl\llvert1 - \frac{\bar{F}(x+t)}{\bar{F}(x+s)
}\biggr
\rrvert
\\
&\leq&1 - e^{-\llVert h \rrVert _\infty\llvert t - s \rrvert}
\\
&\leq&\llVert h \rrVert_\infty\llvert t-s\rrvert,
\end{eqnarray*}
where the third inequality above follows from (\ref{inthazard}) and
the final inequality follows from the mean value theorem. Next, for the
inductive step, suppose that (\ref{ccdfratiobounddifference}) holds
for $n=0,1,\ldots,k-1$. We then have that
%
%
\begin{eqnarray}\label{ccdfdiffinequality}
&&\sup_{x \geq0} \biggl\llvert\biggl(\frac{\bar{F}(x+t)}{\bar{F}(x)} -
\frac{\bar{F}(x+s)}{\bar{F}(x)} \biggr)^{(k)} \biggr\rrvert
\nonumber
\\
&&\qquad = \sup_{x \geq0} \biggl\llvert\biggl( \bigl(h(x) - h(x+s)
\bigr) \frac
{\bar{F}(x+s)}{\bar{F}(x)} - \bigl(h(x) - h(x+t) \bigr) \frac
{\bar{F}(x+t)}{\bar{F}(x)}
\biggr)^{(k-1)} \biggr\rrvert
\nonumber
\\
&&\qquad = \sup_{x \geq0} \biggl\llvert\biggl( \bigl(h(x+t) - h(x+s)
\bigr) \frac{\bar{F}(x+s)}{\bar{F}(x)}
\nonumber\\[-8pt]\hspace*{-2pt}\\[-8pt]
\hspace*{-2pt}&&\hspace*{56pt}{}- \bigl(h(x) - h(x+t) \bigr) \biggl(\frac
{\bar{F}(x+t)}{\bar{F}(x)}
- \frac{\bar{F}(x+s)}{\bar{F}(x)} \biggr) \biggr)^{(k-1)} \biggr\rrvert
\nonumber
\\
&&\qquad = \sup_{x \geq0} \Biggl\llvert\sum
_{i=0}^{k-1} \pmatrix{k-1
\cr
i} \bigl[ \bigl(h(x+t) -
h(x+s) \bigr) \bigr]^{(k-1-i)} \biggl(\frac{\bar
{F}(x+s)}{\bar{F}(x)}
\biggr)^{(i)}
\nonumber
\\
\hspace*{-2pt}&&\hspace*{48pt}{}- \sum_{i=0}^{k-1} \pmatrix{k-1
\cr i} \bigl(h(x) - h(x+t) \bigr)^{(k-1-i)} \biggl(\frac{\bar{F}(x+t)}{\bar
{F}(x)} -\frac{\bar{F}(x+s)}{\bar{F}(x)} \biggr)^{(i)} \Biggr\rrvert. \nonumber
\end{eqnarray}
Now note that since by Assumption~\ref{boundedhazardassumption} we
have that $h \in C^{\infty}_b(\mathbb{R}_{+})$, it follows that all
of the derivatives of $h$ are bounded and hence
uniformly continuous as well. Using this fact, part~(1) and the inductive
hypothesis it now follows that (\ref{ccdfdiffinequality}) is less than
or equal to
\[
2 \llvert t-s\rrvert\sum_{i=0}^{k-1}
\pmatrix{k-1
\cr
i} \bigl( \bigl\llVert h^{((k-1-i)-1)} \bigr\rrVert
_{\infty} L_i + \bigl\llVert h^{(k-1-i)} \bigr\rrVert
_{\infty} M_i \bigr) \equiv M_k \llvert t-s\rrvert.
\]
This proves part~(2) and completes the proof.
\end{pf*}

We next provide the proof of Lemma~\ref{LemDcheck}

\begin{pf*}{Proof of Lemma~\ref{LemDcheck}}
We must show that
%
%
\begin{equation}
\label{Dlimit} \hat{\mathcal{D}}^{0,n} + \hat{\mathcal{D}}^n
\Rightarrow\hat{\mathcal{D}}^0 + \hat{\mathcal{D}} \qquad\mbox{in }
\mathbb{D}\bigl([0,T],\mathcal{S}'\bigr) \mbox{ as }n
\rightarrow\infty.
\end{equation}
Let $\varphi, \psi\in\mathcal{S}$ and let $t \geq0$. It then
follows by Proposition~\ref{PropMartingales} that we may write
\begin{eqnarray*}
&&\lll\hat{\mathcal{D}}^{0,n} + \hat{\mathcal{D}}^n
\rrr_t (\varphi, \psi)
\nonumber
\\
&&\qquad = \frac{1}{n} \Biggl( \sum_{i=1}^{A^n_0(\infty)}
\int_0^{\tilde
{\eta}_i \wedge t } \varphi\bigl(u - \tilde{
\tau}^n_i\bigr) \psi\bigl( u - \tilde{
\tau}^n_i \bigr) h_{\tilde{\tau}^n_i}(u) \,du
\\
&&\hspace*{81pt}{} + \sum_{i=1}^{E^n_t} \int
_0^{\eta_i \wedge( t -
\tau^n_i ) } \varphi(u) \psi(u) h(u) \,du \Biggr)
\nonumber
\\
&&\qquad = \int_0^{t} \bigl\langle\bar{
\mathcal{A}}^n_s, \varphi\psi h \bigr\rangle \,ds,
\end{eqnarray*}
where the second equality above follows as a result of Proposition~\ref
{PropositionHazard}. However, by the continuity of the integral
mapping on $\mathbb{D}([0,T],\mathbb{R})$, it now follows by Theorem~\ref{AgeFluidLimitTheorem} that
%
%
\begin{eqnarray}
\int_0^{e} \bigl\langle\bar{
\mathcal{A}}^n_s, \varphi\psi h \bigr\rangle \,ds &
\Rightarrow& \int_0^{e} \langle\bar{
\mathcal{A}}_s, \varphi\psi h \rangle \,ds \qquad\mbox{in }\mathbb{D}
\bigl([0,T],\mathbb{R}\bigr)\label{D0DQVdiffusionlimit}
\end{eqnarray}
as $n \rightarrow\infty$.

We now verify that $(\hat{\mathcal{D}}^{0,n} + \hat{\mathcal{D}}^n
)_{n \geq1}$ satisfies parts~(1) and~(2) of Theorem~\ref{Mitoma}. We
begin with part~(1). Let $\varphi= \psi$ in (\ref
{D0DQVdiffusionlimit}) and note that using
the fact that the maximum jumps of both $\langle\hat
{\mathcal{D}}^{0,n} +
\hat{\mathcal{D}}^n, \varphi\rangle$ and $\ll \hat
{\mathcal{D}}^{0,n} +
\hat{\mathcal{D}}^n \gg(\varphi,\varphi)$ over the interval $[0,T]$ are
uniformly bounded in $n$, along with the martingale FCLT \cite{EK86}
and (\ref{D0DQVdiffusionlimit}), yields the limit
\[
\bigl\langle\hat{\mathcal{D}}^{0,n} + \hat{\mathcal{D}}^n,
\varphi\bigr\rangle\Rightarrow\bigl\langle\hat{\mathcal{D}}^0 + \hat{
\mathcal{D}}, \varphi\bigr\rangle\qquad\mbox{in }\mathbb{D}\bigl
([0,T],\mathbb R
\bigr) \mbox{ as }n \rightarrow\infty.
\]
Thus, part~(1) of Theorem~\ref{Mitoma} is satisfied. We next proceed to
verify part~(2) of Theorem~\ref{Mitoma}. Let $m \geq1$ and let
$\varphi_1, \ldots, \varphi_m \in\mathcal{S}$ and $1 \leq
i,j \leq m$. Then, using the
fact that the maximum jumps of both $ (\langle\hat{\mathcal
{D}}^{0,n} + \hat{\mathcal{D}}^n, \varphi_1 \rangle, \ldots,\langle\hat{\mathcal{D}}^{0,n} + \hat{\mathcal{D}}^n, \varphi_m
\rangle)$
and $\langle \hat{\mathcal{D}}^{0,n} + \hat{\mathcal{D}}^n \rangle
(\varphi_i,
\varphi_j)$ are bounded over the interval $[0,T]$, uniformly in $n$,
it follows by
the martingale FCLT \cite{EK86} and (\ref{D0DQVdiffusionlimit}) that
\begin{eqnarray*}
&& \bigl(\bigl\langle\hat{\mathcal{D}}^{0,n} + \hat{\mathcal{D}}^n,
\varphi_1 \bigr\rangle, \ldots,\bigl\langle\hat{
\mathcal{D}}^{0,n} + \hat{\mathcal{D}}^n,
\varphi_m \bigr\rangle\bigr)
\\
&&\qquad  \Rightarrow\bigl(\bigl\langle\hat{
\mathcal{D}}^0 + \hat{\mathcal{D}}, \varphi_1 \bigr
\rangle, \ldots,\bigl\langle\hat{\mathcal{D}}^0 + \hat{\mathcal{D}},
\varphi_m \bigr\rangle\bigr)
\end{eqnarray*}
in $\mathbb{D}^m([0,T], \mathbb R)$ as $n \rightarrow\infty$. This limit
then provides convergence of the finite-dimensional distributions of the
random vector on the lefthand side above, which is sufficient to verify
part~(2) of
Theorem~\ref{Mitoma}. Thus, (\ref{Dlimit}) holds, where
$\hat{\mathcal{D}}^0 + \hat{\mathcal{D}}$ is an
$\mathcal{S}'$-valued Gaussian martingale with
tensor quadratic covariation given by~(\ref{D0DQVdiffusionlimit}).
Equation~(\ref{Dcovariance}) now holds since $\langle\hat{\mathcal{D}}^0 +
\hat{\mathcal{D}}, \varphi\rangle$ has independent increments for
each $\varphi\in\mathcal{S}$.
\end{pf*}

Next, we provide the proof of Lemma~\ref{Gdiffusionlimitindep}.

\begin{pf*}{Proof of Lemma~\ref{Gdiffusionlimitindep}}
We first prove that
%
%
\begin{eqnarray}
\hat{\mathcal{G}}^n &\Rightarrow&\hat{\mathcal{G}} \qquad\mbox{in }
\mathbb{D}\bigl([0,T],\mathcal{S}'\bigr) \mbox{ as }n
\rightarrow\infty. \label{proveme}
\end{eqnarray}
In order to do so, we will verify that parts~(1) and~(2) of Theorem~\ref
{Mitoma} are satisfied. We begin with part~(1). Let $\varphi, \psi\in
\mathcal{S}$ and note that by Proposition~\ref{PropMartingalesTwo},
the functional strong law of large numbers \cite{WhittBook} and the
random time change theorem \cite{Billingsley99}, we have that
%
%
\begin{eqnarray}
\label{GOQVdiffusionlimit} \bigl[ \hat{\mathcal{G}}^n \bigr](\varphi,
\psi) &=&
\frac{1}{n} \sum_{i=1}^{E^n} \bigl(
\varphi(\eta_i)-\langle\mathcal{F},\varphi\rangle\bigr) \bigl(\psi(
\eta_i)-\langle\mathcal{F},\psi\rangle\bigr)
\nonumber\\[-8pt]\\[-8pt]\nonumber
&\Rightarrow& \bar{E} \operatorname{Cov}\bigl(\varphi(\eta),\psi(\eta
)\bigr) \qquad\mbox{in }\mathbb{D}\bigl([0,T],\mathbb R\bigr)\mbox{ as }n
\rightarrow
\infty,
\end{eqnarray}
where $\eta$ is a random variable with c.d.f. $F$. Now letting $\varphi=
\psi$ in (\ref{GOQVdiffusionlimit}) and using the fact that
the maximum jump of $\langle\hat{\mathcal{G}}^n, \varphi\rangle$
over the interval $[0,T]$ is bounded uniformly in $n$, along with the
martingale FCLT \cite{EK86}, yields the limit
\[
\bigl\langle\hat{\mathcal{G}}^n, \varphi\bigr\rangle\Rightarrow
\langle\hat{\mathcal{G}},\varphi\rangle\qquad\mbox{in }\mathbb{D}\bigl([0,T],
\mathbb R\bigr) \mbox{ as }n \rightarrow\infty.
\]
Thus, part~(1) of Theorem~\ref{Mitoma} holds. We next prove that part~(2)
of Theorem~\ref{Mitoma} holds.
Let $m \geq1$ and let $\varphi_1, \ldots, \varphi_m \in\mathcal
{S}$. Then,\vspace*{1pt} using the limit (\ref{GOQVdiffusionlimit}) and the fact
that the maximum jump of $ (\langle\hat{\mathcal{G}}^n, \varphi
_1 \rangle, \ldots,\langle\hat{\mathcal{G}}^n, \varphi_m \rangle
)$ over the interval $[0,T]$
is bounded uniformly in $n$, along with the martingale FCLT \cite
{EK86}, yields the limit
\[
\bigl(\bigl\langle\hat{\mathcal{G}}^n, \varphi_1 \bigr
\rangle, \ldots, \bigl\langle\hat{\mathcal{G}}^n,
\varphi_m \bigr\rangle\bigr) \Rightarrow\bigl(\langle\hat{
\mathcal{G}}, \varphi_1 \rangle, \ldots, \langle\hat{\mathcal{G}},
\varphi_m \rangle\bigr) \qquad\mbox{in }\mathbb{D}^m
\bigl([0,T], \mathbb R\bigr)
\]
as $n \rightarrow\infty$. This limit then provides convergence of the
finite-dimensional distributions of the random vector on the left-hand
side above, which shows that part~(2) of Theorem~\ref{Mitoma} holds.
Thus, (\ref{proveme}) it proven.

In order to complete the proof, it now suffices to show that
%
%
\begin{eqnarray}
\hat{\mathcal{G}}^n - \check{\mathcal{G}}^n&
\Rightarrow&0 \qquad\mbox{in }\mathbb{D}\bigl([0,T],\mathcal{S}'
\bigr) \mbox{ as }n \rightarrow\infty. \label{proveme2}
\end{eqnarray}
However, in order to show (\ref{proveme2}), it suffices by Theorem
\ref{Mitoma} to show that for each $\varphi\in\mathcal{S}$,
%
%
\begin{eqnarray}
\bigl\langle\hat{\mathcal{G}}^n, \varphi\bigr\rangle- \bigl\langle
\check{\mathcal{G}}^n, \varphi\bigr\rangle&\Rightarrow&0 \qquad
\mbox{in }\mathbb{D}\bigl([0,T],\mathbb{R}\bigr) \mbox{ as }n
\rightarrow\infty. \label{proveme3}
\end{eqnarray}
We proceed as follows. In a similar manner to the above, one may show
using the martingale FCLT that
%
%
\begin{eqnarray}
\check{\mathcal{G}}^n &\Rightarrow&\hat{\mathcal{G}} \qquad\mbox{in
}\mathbb{D}\bigl([0,T],\mathcal{S}'\bigr) \mbox{ as }n
\rightarrow\infty. \label{proveme4}
\end{eqnarray}
Hence, for each $\varphi\in\mathcal{S}$, the sequence $(\langle\hat
{\mathcal{G}}^n, \varphi\rangle- \langle\check{\mathcal{G}}^n,
\varphi\rangle)_{n \geq1}$ is tight in $\mathbb{D}([0,T],\break \mathbb
{R})$ and so in order to show (\ref{proveme3}), it suffices to show
that for each $0 \leq t \leq T$,
%
%
\begin{eqnarray}
\bigl\langle\hat{\mathcal{G}}^n_t, \varphi\bigr\rangle-
\bigl\langle\check{\mathcal{G}}^n_t, \varphi\bigr\rangle&
\Rightarrow&0 \qquad\mbox{as }n \rightarrow\infty. \label{proveme5}
\end{eqnarray}
First note that we may write
\begin{eqnarray*}
&& \bigl\langle\hat{\mathcal{G}}^n_t, \varphi\bigr\rangle-
\bigl\langle\check{\mathcal{G}}^n_t, \varphi\bigr
\rangle
\\
&&\qquad = \mathbf{1}_{ \{ E^n_t \geq
\lfloor n \bar{E}_t \rfloor\} } \frac{1}{\sqrt{n}} \Biggl( \sum
_{i= \lfloor n \bar{E}_t \rfloor}^{E^n_t}\bigl(\varphi(\eta_i) -
\langle\mathcal{F},\varphi\rangle\bigr) \Biggr)
\\
&&\quad\qquad{} -\mathbf{1}_{ \{ \lfloor n \bar{E}_t \rfloor\geq E^n_t \} } \frac
{1}{\sqrt{n}} \Biggl( \sum
_{i=E^n_t }^{\lfloor n \bar{E}_t \rfloor
}\bigl(\varphi(\eta_i) -
\langle\mathcal{F},\varphi\rangle\bigr) \Biggr).
\end{eqnarray*}
Now squaring both sides of the above and using the basic identity
$(x_1+x_2)^2 \leq2 (x_1^2+x_2^2)$, it is straightforward to show that
one may write
%
%
\begin{eqnarray}
&&\bigl(\bigl\langle\hat{\mathcal{G}}^n_t, \varphi\bigr
\rangle- \bigl\langle\check{\mathcal{G}}^n_t, \varphi
\bigr\rangle\bigr)^2
\nonumber
\\
&&\qquad \leq \mathbf{1}_{ \{ \bar{E}^n_t \leq2\bar{E}_t \} } \frac
{2}{n} \Biggl( \sum
_{i=E^n_t \wedge\lfloor n \bar{E}_t \rfloor
}^{E^n_t \vee\lfloor n \bar{E}_t \rfloor}\bigl(\varphi(\eta_i) -
\langle\mathcal{F},\varphi\rangle\bigr) \Biggr)^2 \label{firstlast}
\\
&&\quad\qquad{}+ \mathbf{1}_{ \{ \bar{E}^n_t \geq2 \bar{E}_t \} }\frac{2}{n} \Biggl(
\sum
_{i=E^n_t \wedge\lfloor n \bar{E}_t \rfloor}^{E^n_t
\vee\lfloor n \bar{E}_t \rfloor}\bigl(\varphi(\eta_i) -
\langle\mathcal{F},\varphi\rangle\bigr) \Biggr)^2. \label{secolast}
\end{eqnarray}
We now show that each of the terms (\ref{firstlast}) and (\ref
{secolast}) converges to $0$ in probability as $n$ tends to $\infty$,
which implies (\ref{proveme5}) and completes the proof.

We begin with (\ref{firstlast}). First note that by the independence
of $\{\eta_i, i \geq1\}$ from the arrival process $E^n$, and the
i.i.d. nature of the sequence $\{\eta_i, i \geq1\}$, we have that
\begin{eqnarray*}
&&\mathbb E \Biggl[ \frac{2}{n} \mathbf{1}_{ \{ \bar{E}^n_t \leq2
\bar{E}_t \} } \Biggl( \sum
_{i=E^n_t \wedge\lfloor n \bar{E}_t
\rfloor}^{E^n_t \vee\lfloor n \bar{E}_t \rfloor}\bigl(\varphi(
\eta_i) - \langle\mathcal{F},\varphi\rangle\bigr)
\Biggr)^2 \Biggr]
\\
&&\qquad =\frac{2}{n} \mathbb E \Biggl[ \mathbf{1}_{ \{ \bar{E}^n_t \leq2
\bar{E}_t \} } \sum
_{i=E^n_t \wedge\lfloor n \bar{E}_t \rfloor
}^{E^n_t \vee\lfloor n \bar{E}_t \rfloor}\bigl(\varphi(\eta_i) -
\langle\mathcal{F},\varphi\rangle\bigr)^2 \Biggr]
\\
&&\qquad \leq\frac{2}{n} \mathbb E \Biggl[ \sum_{i= (E^n_t \wedge\lceil
2n\bar{E}_t \rceil) \wedge\lfloor n \bar{E}_t \rfloor}^{(E^n_t
\wedge\lceil2n\bar{E}_t \rceil) \vee\lfloor n \bar{E}_t \rfloor
}
\bigl(\varphi(\eta_i) - \langle\mathcal{F},\varphi\rangle
\bigr)^2 \Biggr]
\\
&&\qquad \leq 8 \bigl\llVert\varphi^2\bigr\rrVert
_{\infty} \mathbb E\bigl[\bigl\llvert\bigl(\bar{E}^n_t
\wedge2\bar{E}_t\bigr) - \bar{E}_t \bigr\rrvert\bigr].
\end{eqnarray*}
However, since $\bar{E}^n_t \Rightarrow\bar{E}_t$ as $n \rightarrow
\infty$, it follows that
\begin{eqnarray*}
\mathbb E\bigl[\bigl\llvert\bigl(\bar{E}^n_t \wedge2
\bar{E}_t\bigr) - \bar{E}_t \bigr\rrvert\bigr] &
\rightarrow&0\qquad\mbox{as }n \rightarrow\infty.
\end{eqnarray*}
This then implies that (\ref{firstlast}) converges to $0$ in
probability as $n$ tends to $\infty$, as desired.

We next proceed to (\ref{secolast}). Note that since $\bar{E}^n_t
\Rightarrow\bar{E}_t $ as $n \rightarrow\infty$, it follows that
for each $\varepsilon> 0$:
\begin{eqnarray*}
\lim_{n \rightarrow\infty} \mathbb{P} \Biggl( \mathbf{1}_{ \{ \bar
{E}^n_t \geq2 \bar{E}_t \} }
\frac{2}{n} \Biggl( \sum_{i=E^n_t
\wedge\lfloor n \bar{E}_t \rfloor}^{E^n_t \vee\lfloor n \bar{E}_t
\rfloor}
\bigl(\varphi(\eta_i) - \langle\mathcal{F},\varphi\rangle\bigr)
\Biggr)^2 > \varepsilon\Biggr) &=&0.
\end{eqnarray*}
This shows that (\ref{secolast}) converges to $0$ in probability as
$n$ tends to $\infty$, which completes the proof.
\end{pf*}
\end{appendix}

\section*{Acknowledgements}
The authors would like to thank the referees for their numerous helpful
comments and suggestions which have helped to improve the clarity and
overall exposition of the paper.


%

\printaddresses
\end{document}